    \setlist[enumerate]{label=(\roman*)}
\newcommand{\qq}{\mathbb{Q}} 
\newcommand{\nn}{\mathbb{N}} 
\newcommand{\zz}{\mathbb{Z}} 
\newcommand{\cc}{\mathbb{C}} 
\newcommand{\ff}{\mathbb{F}} 
\newcommand{\kk}{\mathbb{K}} 
\newcommand{\lang}[1]{\ensuremath{L(#1)}} 
\newcommand{\mqf}[1]{\ensuremath{Q_{#1}}} 
\newcommand{\ret}[1]{\ensuremath{\mathcal{R}_{#1}}} 
\newcommand{\retsubs}[2]{#1_{#2}} 
\newcommand{\mqu}[1]{\ensuremath{\mu^{#1}}} 
\newcommand{\isom}{\cong} 
\newcommand{\var}[1]{\ensuremath{\mathbf{#1}}} 
\newcommand{\pro}[1]{\ensuremath{\mathsf{Pro}(#1)}} 
\newcommand{\ab}[1]{\ensuremath{\var{Ab}_{#1}}} 
\newcommand{\freepro}{\widehat{F}} 
\newcommand{\free}{F} 
\newcommand{\given}{ : } 
\newcommand{\from}{\colon} 
\newcommand{\id}{id} 
\newcommand{\nil}{\mathrm{nil}} 
\newcommand{\emptyw}{\varepsilon} 
\newcommand{\mat}[1]{\ensuremath{M_{#1}}} 
\newcommand{\poly}[1]{\ensuremath{\chi_{#1}}} 
\newcommand{\rev}{\ast} 
\newcommand{\prop}{Proposition~}
\newcommand{\theo}{Theorem~}
\newcommand{\coro}{Corollary~}
\newcommand{\lem}{Lemma~}
\newcommand{\exa}{Example~}
\newcommand{\rem}{Remark~}
\DeclareMathOperator{\card}{Card} 
\DeclareMathOperator{\dee}{d} 
\DeclareMathOperator{\img}{Im} 
\DeclareMathOperator{\en}{End} 
\DeclareMathOperator{\mul}{mul} 
\DeclareMathOperator{\spl}{SL} 
\DeclareMathOperator{\gl}{GL} 
\DeclareMathOperator{\pdet}{pdet} 
\theoremstyle{plain}
\newtheorem{theorem}{Theorem}[section]
\newtheorem{lemma}[theorem]{Lemma}
\newtheorem{proposition}[theorem]{Proposition}
\newtheorem{corollary}[theorem]{Corollary}
\theoremstyle{definition}
\newtheorem{definition}[theorem]{Definition}
\theoremstyle{remark} 
\newtheorem{remark}[theorem]{Remark}
\newtheorem{example}[theorem]{Example}
\newtheorem{question}[theorem]{Question}
\begin{document}

\title{Pronilpotent quotients associated with primitive substitutions}

\author[H. Goulet-Ouellet]{Herman Goulet-Ouellet}

\address{University of Coimbra, CMUC, Department of Mathematics}

\email{hgouletouellet@student.uc.pt}

\subjclass[2010]{20E18, 20F05, 37B10, 68R15}

\keywords{Profinite groups, Pseudovarieties of groups, Pronilpotent groups, Primitive substitutions, Schützenberger groups}

\begin{abstract}
    We describe the pronilpotent quotients of a class of projective profinite groups, that we call $\omega$-presented groups, defined using a special type of presentations. The pronilpotent quotients of an $\omega$-presented group are completely determined by a single polynomial, closely related with the characteristic polynomial of a matrix. We deduce that $\omega$-presented groups are either perfect or admit the $p$-adic integers as quotients for cofinitely many primes. We also find necessary conditions for absolute and relative freeness of $\omega$-presented groups. Our main motivation comes from semigroup theory: the maximal subgroups of free profinite monoids corresponding to primitive substitutions are $\omega$-presented (a theorem due to Almeida and Costa). We are able to show that the incidence matrix of a primitive substitution carries partial information on the pronilpotent quotients of the corresponding maximal subgroup. We apply this to deduce that the maximal subgroups corresponding to primitive aperiodic substitutions of constant length are not absolutely free.
\end{abstract}

\maketitle

\section{Introduction}
\label{s:intro}

In the early 2000s, Almeida established a connection between symbolic dynamics and free profinite monoids \cite{Almeida2003,Almeida2004,Almeida2007}. He showed that to each minimal shift space correponds a maximal subgroup of a free profinite monoid, later named the \emph{Schützenberger group} of the shift space. This group is obtained by taking the topological closure of the language of the shift space inside the corresponding free profinite monoid, and it defines an invariant of the shift space: two conjugate shift spaces have isomorphic Schützenberger groups \cite{Costa2006} (as do, even, flow equivalent shift spaces~\cite{Costa2021}).

In 2013, Almeida and Costa showed how to obtain presentations for the Schützenberger groups corresponding to substitutive minimal shift spaces using return substitutions and $\omega$-powers \cite{Almeida2013}. Using a similar process, every endomorphism of a free group of finite rank yields a presentation for some profinite group. Groups thus defined are called \emph{$\omega$-presented} and they are formally introduced in \S\ref{ss:omega}. The main goal of this paper is to describe the pronilpotent quotients of $\omega$-presented groups and apply this knowledge to study Schützenberger groups of primitive substitutions. In order to do this, we rely on several properties of maximal quotient functors which are presented in \S\ref{s:maxpronil}. Since $\omega$-presented groups are projective (\S\ref{ss:omega}), their maximal pronilpotent quotients are products of free pro-$p$ groups (\S\ref{ss:tate}). The ranks of these pro-$p$ components are completely determined, in a very straightforward way, by a single polynomial: the reciprocal of the characteristic polynomial of the incidence matrix of the free group endomorphism used in the $\omega$-presentation (\S\ref{ss:dimension}). In particular, for a given $\omega$-presented group, all the information about its pronilpotent quotients is contained in this single polynomial. 

Using all of this, we draw a number of conclusions. We show in \S\ref{ss:perfect} that these groups are either perfect, or have \emph{prime-rich} Abelianizations, in the sense that they admit the $p$-adic integers as quotients for cofinitely many primes. In \S\ref{ss:freeness}, we give necessary conditions for absolute and relative freeness of $\omega$-presented groups (on this topic, other results may also be found in a recent preprint by the author \cite{GouletOuellet2021}). This may be viewed as a contribution toward a solution to a problem proposed in 2013 by Almeida and Costa \cite[Problem~8.3]{Almeida2013}. 

In \S\ref{s:max}, we specialize these results to maximal subgroups of free profinite monoids corresponding to primitive substitutions. In this case, an $\omega$-presentation can be obtained using a return substitution \cite{Almeida2013}. Our first observation is that these groups are neither perfect nor pro-$p$, partially answering a question of Zalesskii reported by Almeida and Costa \cite{Almeida2013}. Extending an idea of Durand (\S\ref{ss:poly}), we show that the structure of the pronilpotent quotients of the maximal subgroup corresponding to a primitive aperiodic substitution is partially reflected in the characteristic polynomial of the substitution itself (\S\ref{ss:schutz}). The section culminates with one of our main results: the Schützenberger group of a primitive aperiodic substitution of constant length is not absolutely free (\theo\ref{t:uniform}). We conclude with a series of examples that illustrate various aspects of our results (\S\ref{ss:examples}). 

\section{Maximal pronilpotent quotients}
\label{s:maxpronil}

The aim of this section is to collect some general facts about maximal quotient functors, and more specifically about the pronilpotent one. We also recall along the way some definitions and set up some notation for the next sections. The first subsection is concerned with general properties of maximal quotient functors, while the second one focuses on the pronilpotent case.

\subsection{Maximal quotient functors}
\label{ss:def}

By a \emph{pseudovariety}, we mean a class of finite groups \var{H} closed under taking quotients and subgroups, and forming finite direct products. For the definition and basic properties of so-called \emph{pro-\var{H} groups}, the reader may wish to consult Ribes and Zalesskii's book on the topic \cite{Ribes2010a}. (Note that they use the term \emph{variety} instead of \emph{pseudovariety}.) Let 
\begin{itemize}
    \item \var{G} be the pseudovariety of all finite groups;
    \item $\var{G}_p$ be the pseudovariety of finite $p$-groups ($p$ a prime);
    \item $\var{G}_\nil$ be the pseudovariety of finite nilpotent groups.
\end{itemize}
Pro-\var{H} groups are respectively called \emph{profinite} when $\var{H}=\var{G}$, \emph{pro-$p$} when $\var{H}=\var{G}_p$ or \emph{pronilpotent} when $\var{H}=\var{G}_\nil$.

Given a profinite group $G$ and a pseudovariety \var{H}, we let $R_\var{H}(G)$ be the intersection of all clopen normal subgroups $N\trianglelefteq G$ such that $G/N\in \var{H}$. We further define $\mqf{\var{H}}(G) = G/R_\var{H}(G)$, and we denote by $\mqu{\var{H}}_G\from G\to\mqf{\var{H}}(G)$ the corresponding canonical epimorphism, $\mqu{\var{H}}_G(x)=xR_\var{H}(G)$. Note that $\mqf{\var{H}}(G)$ is pro-\var{H}: it is a subdirect product of the groups $G/N$, where $N$ ranges over all clopen normal subgroups $N\trianglelefteq G$ such that $G/N\in\var{H}$, and every subdirect product of pro-\var{H} groups is also pro-\var{H} \cite[\prop2.2.1(c)]{Ribes2010a}.

Let $\pro{\var{H}}$ be the category of pro-\var{H} groups equipped with continuous group homomorphisms, and consider the inclusion functor $I_\var{H}\from\pro{\var{H}}\to\pro{\var{G}}$. The next result is standard, although not usually stated in those terms. We include a proof for the reader's convenience. For more details on adjunctions, we refer to Mac~Lane's book \cite[{\S}IV]{MacLane1971}. The reader will also find there the definition of universal arrows used in the proof below \cite[{\S}III.1]{MacLane1971}.
\begin{proposition}[{cf. \cite[\lem3.4.1(a)]{Ribes2010a}}]\label{p:adjoint}
    For every pseudovariety $\var{H}$, $\mqf{\var{H}}$ is a functor which is a left adjoint of $I_\var{H}$. Moreover, \mqu{\var{H}} is a natural transformation which is the unit of this adjunction. 
\end{proposition}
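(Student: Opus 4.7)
The plan is to prove the proposition by verifying the universal arrow characterization of left adjoints: I will show that for every profinite group $G$, the map $\mqu{\var{H}}_G\from G\to I_\var{H}(\mqf{\var{H}}(G))$ is universal from $G$ to $I_\var{H}$. By the standard result (Mac~Lane, \S IV.1), the existence of such universal arrows at each object automatically yields a functor $\mqf{\var{H}}$ left adjoint to $I_\var{H}$, whose unit is given precisely by the family of universal arrows $\mqu{\var{H}}_G$. So once universality is established, the functoriality of $\mqf{\var{H}}$ and the naturality of $\mqu{\var{H}}$ come for free.

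To check universality, let $H$ be a pro-$\var{H}$ group and let $\varphi\from G\to H$ be a continuous homomorphism. I need to produce a unique continuous homomorphism $\bar\varphi\from \mqf{\var{H}}(G)\to H$ with $\bar\varphi\circ\mqu{\var{H}}_G=\varphi$. Uniqueness is immediate because $\mqu{\var{H}}_G$ is surjective. For existence, it suffices to show $R_\var{H}(G)\subseteq\ker\varphi$, so that $\varphi$ descends to the quotient. This is the main technical point, and the key step of the argument.

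To establish the inclusion $R_\var{H}(G)\subseteq\ker\varphi$, I exploit that $H$ is pro-$\var{H}$: its identity is the intersection of the clopen normal subgroups $K\trianglelefteq H$ with $H/K\in\var{H}$. For any such $K$, the preimage $\varphi^{-1}(K)$ is a clopen normal subgroup of $G$, and the induced map $G/\varphi^{-1}(K)\hookrightarrow H/K$ is an embedding into a group of $\var{H}$. Since $\var{H}$ is closed under subgroups, $G/\varphi^{-1}(K)\in\var{H}$, and therefore $R_\var{H}(G)\subseteq\varphi^{-1}(K)$ by definition. Intersecting over all such $K$ yields
\[
R_\var{H}(G)\subseteq\bigcap_K \varphi^{-1}(K)=\varphi^{-1}\Bigl(\bigcap_K K\Bigr)=\ker\varphi,
\]
as required. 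Continuity of the induced map $\bar\varphi$ follows from the universal property of the quotient topology.

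The main obstacle, as indicated above, is the inclusion $R_\var{H}(G)\subseteq\ker\varphi$, which crucially relies on the pseudovariety axiom that $\var{H}$ is closed under subgroups (closure under quotients and finite products is not needed at this stage). Once this is in hand, the rest is formal: $\mqf{\var{H}}$ is promoted to a functor by sending a continuous morphism $f\from G_1\to G_2$ to the unique extension of $\mqu{\var{H}}_{G_2}\circ f$ along $\mqu{\var{H}}_{G_1}$, and the naturality squares for $\mqu{\var{H}}$ commute by uniqueness in the universal property, so $\mqu{\var{H}}$ is the unit of the adjunction.
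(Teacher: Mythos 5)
Your proposal is correct and follows essentially the same route as the paper: verify that $(\mqf{\var{H}}(G),\mqu{\var{H}}_G)$ is a universal arrow to $I_\var{H}$, with the key step being $R_\var{H}(G)\subseteq\ker\varphi$ obtained by pulling back the clopen normal subgroups $K\trianglelefteq H$ with $H/K\in\var{H}$. Your phrasing of $G/\varphi^{-1}(K)$ as embedding into $H/K$ (rather than being isomorphic to it) and the explicit appeal to closure under subgroups is in fact slightly more careful than the paper's wording, but the argument is the same.
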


\begin{proof}
    It suffices to show that for every profinite group $G$, the pair $(\mqf{\var{H}}(G),\mqu{\var{H}}_G)$ is a universal arrow from $G$ to $I_\var{H}$ \cite[{\S}IV.1, \theo2]{MacLane1971}.

    Let $H$ be a pro-\var{H} group and $\varphi\from G\to H$ be a continuous group homomorphism. The set $B$ of all clopen normal subgroups $N\trianglelefteq H$ such that $H/N\in\var{H}$ forms a neighborhood basis of the identity element of $H$ \cite[\theo2.1.3]{Ribes2010a}. Hence, $\ker(\varphi)$ is the intersection $\bigcap_{N\in B}\varphi^{-1}(N)$ and $G/\varphi^{-1}(N)\isom H/N\in \var{H}$. Thus, $R_\var{H}(G)\subseteq\ker(\varphi)$ and by standard properties of quotients, the map $\bar\varphi\from\mqf{\var{H}}(G)\to H$ defined by $\bar{\varphi}(xR_\var{H}(G)) = \varphi(x)$ is a well-defined morphism of profinite groups. In particular, it satisfies $\bar\varphi\mqu{\var{H}}_G=\varphi$, as required.
\end{proof}

Note that $\mqf{\var{H}}$ acts on morphisms as follows: if $\varphi\from G\to G'$ is a morphism of profinite groups, then $\mqf{\var{H}}(\varphi)$ is the unique morphism satisfying $\mqf{\var{H}}(\varphi)\mqu{\var{H}}_G = \mqu{\var{H}}_{G'}\varphi$. The group $\mqf{\var{H}}(G)$ is called the \emph{maximal pro-\var{H} quotient} of $G$, and when $\var{H}=\var{G}_\nil$ or $\var{G}_p$, the \emph{maximal pronilpotent quotient} or \emph{maximal pro-$p$ quotient} of $G$. Moreover, we abbreviate $\mqf{\var{G}_\nil}$ by $\mqf{\nil}$ and $\mqf{\var{G}_p}$ by $\mqf{p}$. 

Left adjoints are unique up to natural isomorphism \cite[{\S}IV.1, \coro1]{MacLane1971}. We make use of this fact to establish the next lemma. The proof uses a characterization of pro-\var{H} groups which already appeared in the previous proof: a profinite group $G$ is pro-\var{H} if and only if its identity element admits a neighborhood basis consisting of clopen normal subgroups $N\trianglelefteq G$ such that $G/N\in\var{H}$ \cite[\theo2.1.3]{Ribes2010a}.
\begin{lemma}\label{l:composition} 
    Let \var{H} and \var{K} be pseudovarieties. There is a natural isomorphism 
    \begin{equation*}
        \mqf{\var{H}}\mqf{\var{K}} \isom \mqf{\var{H\cap K}}.
    \end{equation*}
\end{lemma}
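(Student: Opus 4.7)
The plan is to deduce the isomorphism from uniqueness of left adjoints, as hinted just before the statement, rather than computing $R_{\var{H}\cap\var{K}}(G)$ directly. My argument will have three main steps.

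First, I would check that $\mqf{\var{H}}$ restricts to a well-defined functor $\pro{\var{K}} \to \pro{\var{H}\cap\var{K}}$. Using the neighborhood basis characterization of pro-\var{K} groups recalled just above the statement, every finite continuous quotient of a pro-\var{K} group already lies in \var{K}: any such quotient factors through a quotient from the basis, and \var{K} is closed under further quotients. Hence, for $G$ pro-\var{K}, the finite quotients of $G$ lying in \var{H} must lie in $\var{H}\cap\var{K}$, so $\mqf{\var{H}}(G)$, being a subdirect product of such groups, is pro-$\var{H}\cap\var{K}$.

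Second, I would verify that this restricted functor is a left adjoint to the inclusion $I'\from\pro{\var{H}\cap\var{K}} \hookrightarrow \pro{\var{K}}$. The universal-arrow argument in \prop\ref{p:adjoint} transfers verbatim: for $G$ pro-\var{K} and $H$ pro-$\var{H}\cap\var{K}$, any continuous morphism $G\to H$ factors uniquely through $\mqu{\var{H}}_G$, now viewed as an arrow into the refined codomain.

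Finally, since the composite of two left adjoints is a left adjoint to the composite of their right adjoints (a standard fact from Mac~Lane's book), $\mqf{\var{H}}\mqf{\var{K}}$ is left adjoint to $I_\var{K} \circ I' = I_{\var{H}\cap\var{K}}$. By \prop\ref{p:adjoint}, $\mqf{\var{H}\cap\var{K}}$ is also left adjoint to $I_{\var{H}\cap\var{K}}$, and uniqueness of left adjoints up to natural isomorphism yields $\mqf{\var{H}}\mqf{\var{K}} \isom \mqf{\var{H}\cap\var{K}}$. The main potential obstacle is the first step — being careful that the codomain of $\mqf{\var{H}}$ genuinely refines when the domain is restricted — but once that is settled, no computation is required and the rest is pure formalism.
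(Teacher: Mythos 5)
Your proposal is correct and follows essentially the same route as the paper: both arguments hinge on checking that the relevant composite lands in $\pro{\var{H}\cap\var{K}}$ (via the neighborhood-basis characterization of pro-\var{K} groups) and then invoking uniqueness of left adjoints. The only cosmetic difference is that you package the final step as ``a composite of left adjoints is left adjoint to the composite of the right adjoints,'' whereas the paper verifies the universal-arrow property of the composite unit $\mqu{\var{H}}_K\mqu{\var{K}}_G$ directly by the same two-step factorization.
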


\begin{proof}
    Let $\var{L}=\var{H}\cap\var{K}$. We claim that a profinite group $G$ which is both pro-\var{H} and pro-\var{K} must also be pro-\var{L}. Let $B$ and $B'$ be neighborhood bases of the identity element of $G$ consisting of clopen normal subgroups $N\trianglelefteq G$ satisfying respectively $G/N\in\var{H}$ (for $N\in B$) and $G/N\in\var{K}$ (for $N\in B'$). Given $N\in B$, there is $N'\in B'$ such that $N'\subseteq N$, hence $G/N$ is a quotient of $G/N'$. In particular, $G/N\in\var{L}$ which proves the claim.

    By the previous paragraph, $\mqf{\var{H}}\mqf{\var{K}}$ is a functor $\pro{\var{G}}\to\pro{\var{L}}$. By the uniqueness of left adjoints, it suffices to show that $\mqf{\var{H}}\mqf{\var{K}}$ is a left adjoint of $I_{\var{L}}$, or equivalently that for every profinite group $G$, the pair $(\mqf{\var{H}}(K), \mqu{\var{H}}_K\mqu{\var{K}}_G)$, where $K=\mqf{\var{K}}(G)$, is a universal arrow from $G$ to $I_\var{L}$ \cite[{\S}IV.1, \theo2]{MacLane1971}.

    Let $\varphi\from G\to H$ be a morphism of profinite groups, where $H$ is pro-$\var{H}$. The universal properties of $\mqf{\var{K}}(G)$ and $\mqf{\var{H}}(K)$ give morphisms $\varphi'\from \mqf{\var{K}}(G)\to H$ and $\varphi''\from\mqf{\var{H}}(K)\to H$ such that $\varphi'\mqu{\var{K}}_G=\varphi$ and $\varphi''\mqu{\var{H}}_K=\varphi'$, as in the diagram below.
    \begin{equation*}
        \begin{tikzcd}
            G \rar{\mqu{\var{K}}_G} \drar[swap]{\varphi} & \mqf{\var{K}}(G) \dar[dashed]{\varphi'} \rar[equal] & K \rar{\mqu{\var{H}}_K} \dar[swap]{\varphi'} & \mqf{\var{H}}(K) \dlar[dashed]{\varphi''}\\
            & H \rar[equal] & H &
        \end{tikzcd}
    \end{equation*}
    Finally, we find that $\varphi''\mqu{\var{H}}_K\mqu{\var{K}}_G=\varphi'\mqu{\var{K}}_G=\varphi$, as required.
\end{proof}

Let us denote by $\freepro_\var{H}(X,\ast)$ the free pro-\var{H} group over a pointed Stone space $(X,\ast)$. Free pro-\var{H} groups have the universal property determined by the fact that $\freepro_\var{H}$ is the left adjoint of $U_\var{H}$, where $U_\var{H}$ is the forgetful functor from the category of pro-\var{H} groups to that of pointed Stone spaces (with the identity element of a group acting as basepoint). See \cite[{\S}3]{Ribes2010a} for more details. We abbreviate $\freepro_\var{G}$ by $\freepro$, $\freepro_{\var{G}_\nil}$ by $\freepro_\nil$, and $\freepro_{\var{G}_p}$ by $\freepro_p$ for every prime $p$. Groups of the form $\freepro(X,\ast)$, $\freepro_{p}(X,\ast)$ and $\freepro_\nil(X,\ast)$ are respectively called \emph{free profinite groups}, \emph{free pro-$p$ groups} and \emph{free pronilpotent groups}. Next is a slightly stronger version of a well-known result. 
\begin{lemma}\label{l:free}
    Let \var{H} and \var{K} be pseudovarieties. There is a natural isomorphism 
    \begin{equation*}
        \mqf{\var{H}}\freepro_\var{K}\isom\freepro_{\var{H}\cap\var{K}}.
    \end{equation*}
\end{lemma}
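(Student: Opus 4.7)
My plan is to imitate the strategy used in the proof of \lem\ref{l:composition}: establish the desired isomorphism via uniqueness of left adjoints. Specifically, I would show that the composite $\mqf{\var{H}}\freepro_\var{K}$ is (when suitably corestricted) a left adjoint of the forgetful functor $U_{\var{H}\cap\var{K}}\from\pro{\var{H\cap K}}\to\stone_*$. Since $\freepro_{\var{H}\cap\var{K}}$ is by definition such a left adjoint, the claimed natural isomorphism would follow from \cite[{\S}IV.1, \coro1]{MacLane1971}.

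The first step is to check that $\mqf{\var{H}}\freepro_\var{K}(X,\ast)$ is in fact pro-$(\var{H}\cap\var{K})$ for every pointed Stone space $(X,\ast)$. It is pro-\var{H} by construction of the maximal pro-\var{H} quotient. Since pseudovarieties are closed under quotients, it also remains pro-\var{K}, being a continuous quotient of the pro-\var{K} group $\freepro_\var{K}(X,\ast)$. By (the argument given in the proof of) \lem\ref{l:composition}, a group that is both pro-\var{H} and pro-\var{K} is pro-$(\var{H}\cap\var{K})$, as needed.

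Next, I would verify the universal property. Let $G$ be a pro-$(\var{H}\cap\var{K})$ group and $f\from(X,\ast)\to G$ a pointed continuous map. Viewing $G$ as pro-\var{K}, the universal property of $\freepro_\var{K}$ produces a unique continuous morphism $\tilde f\from\freepro_\var{K}(X,\ast)\to G$ extending $f$. Viewing $G$ as pro-\var{H}, \prop\ref{p:adjoint} then gives a unique morphism $\bar f\from\mqf{\var{H}}\freepro_\var{K}(X,\ast)\to G$ with $\bar f\,\mqu{\var{H}}_{\freepro_\var{K}(X,\ast)}=\tilde f$. Concatenating these two universal factorizations shows that the pair
\begin{equation*}
    \bigl(\mqf{\var{H}}\freepro_\var{K}(X,\ast),\; \mqu{\var{H}}_{\freepro_\var{K}(X,\ast)}\circ\iota_{X,\ast}\bigr),
\end{equation*}
where $\iota_{X,\ast}$ is the canonical map into the free pro-\var{K} group, is a universal arrow from $(X,\ast)$ to $U_{\var{H}\cap\var{K}}$. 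Applying \cite[{\S}IV.1, \theo2]{MacLane1971} yields that $\mqf{\var{H}}\freepro_\var{K}$ is a left adjoint of $U_{\var{H}\cap\var{K}}$, and uniqueness of left adjoints delivers the natural isomorphism.

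I do not anticipate a genuine obstacle here: the argument is purely formal, mirroring \lem\ref{l:composition}. The only subtlety worth spelling out is the check in the first step that $\mqf{\var{H}}$ applied to a pro-\var{K} group remains pro-\var{K}, which is where the closure of pseudovarieties under quotients is used; everything else is a routine composition of universal properties.
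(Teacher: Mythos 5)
Your proof is correct, but it takes a genuinely different route from the paper. The paper's own proof is a two-line reduction: it cites Ribes--Zalesskii (Proposition~3.4.2) for the base case $\var{K}=\var{G}$, i.e.\ $\mqf{\var{H}}\freepro\isom\freepro_{\var{H}}$, and then chains $\mqf{\var{H}}\freepro_\var{K}\isom\mqf{\var{H}}\mqf{\var{K}}\freepro\isom\mqf{\var{H}\cap\var{K}}\freepro\isom\freepro_{\var{H}\cap\var{K}}$ using \lem\ref{l:composition} twice over. You instead verify directly that $\mqf{\var{H}}\freepro_\var{K}$ is left adjoint to $U_{\var{H}\cap\var{K}}$ by composing the two universal properties and then invoke uniqueness of left adjoints. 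Your argument is self-contained: it avoids the external reference for the base case, needing only the observation from the proof of \lem\ref{l:composition} that a group which is both pro-\var{H} and pro-\var{K} is pro-$(\var{H}\cap\var{K})$, together with the standard fact that continuous quotients of pro-\var{K} groups are pro-\var{K}. The price is length; the paper's proof is shorter because it reuses \lem\ref{l:composition} wholesale. Both are valid. The one step you could make more explicit is the uniqueness half of the composite universal arrow: given any morphism $g$ with $g\,\mqu{\var{H}}_{\freepro_\var{K}(X,\ast)}\,\iota_{X,\ast}=f$, uniqueness in the universal property of $\freepro_\var{K}(X,\ast)$ forces $g\,\mqu{\var{H}}_{\freepro_\var{K}(X,\ast)}=\tilde f$, and then uniqueness in \prop\ref{p:adjoint} forces $g=\bar f$; this is routine but is what makes the composite of the two factorizations a genuine universal arrow rather than a mere existence statement.
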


\begin{proof}
    The case $\var{K}=\var{G}$ is handled by \cite[\prop~3.4.2]{Ribes2010a}. To deduce the general case, use the first case together with \lem\ref{l:composition}, as follows:
    \begin{equation*}
        \mqf{\var{H}}\freepro_\var{K} \isom\mqf{\var{H}}\mqf{\var{K}}\freepro \isom \mqf{\var{H}\cap\var{K}}\freepro \isom \freepro_{\var{H}\cap\var{K}}.\qedhere
    \end{equation*}
\end{proof}

\subsection{Pronilpotent quotients of projective profinite groups}
\label{ss:tate}

Recall that a profinite group $G$ is projective when, for all profinite groups $H$ and $K$, and all morphisms of profinite groups $\varphi\from G\to H$ and $\psi\from K\to H$, with $\psi$ surjective, there exists a morphism $\varphi'\from K\to H$ such that $\psi\varphi'=\varphi$.
\begin{center}
    \begin{tikzcd}
        & G \dar{\varphi} \dlar[dashed,swap]{\varphi'} \\
        K\rar{\psi} & H
    \end{tikzcd}
\end{center}

Our main result for this section, \prop\ref{p:nilprojective} below, is a decomposition of the maximal pronilpotent quotient for projective profinite groups. Bearing in mind the properties of maximal quotient functors presented in \S\ref{ss:def}, it is a mostly straightforward consequence of Tate's characterization of projective pro-$p$ groups, which we now recall.
 
Let $A$ be a set (possibly infinite) equipped with its discrete topology, and \var{H} be a pseudovariety. Consider the pointed Alexandroff extension $(A\cup\{\ast\},\ast)$ of $A$. We stress that $A\cup\{\ast\}$ has an extra point even when $A$ is finite, so the term \emph{compactification} would be a misnomer. We write $\freepro_\var{H}(A)$ as a shorthand for $\freepro_\var{H}(A\cup\{\ast\},\ast)$, the free pro-\var{H} group over the pointed Alexandroff extension of $A$. Groups of the form $\free_\var{H}(A)$ are sometimes known as \emph{free pro-\var{H} groups on sets converging to 1}. Observe that if two sets $A$ and $B$ are in bijection, then $(A\cup\{\ast\},\ast)$ and $(B\cup\{\ast\},\ast)$ are homeomorphic. Hence, up to isomorphism, $\freepro_\var{H}(A)$ depends only on $\card(A)$. 

Let $G$ be a profinite group and $A$ be a set. Recall that a map $f\from A\to G$ \emph{converges to 1} when, for every clopen neighborhood $U$ of the identity element of $G$, the preimage $f^{-1}(U)$ contains all but finitely many elements of $A$. If $G$ is a pro-\var{H} group, then the pro-\var{H} group morphisms $\freepro_\var{H}(A)\to G$ are in bijection with the maps $f\from A\to G$ converging to 1. A result of Melnikov states that every free pro-\var{H} group is isomorphic to $\freepro_\var{H}(\mathfrak{m})$ for some cardinal $\mathfrak{m}$, called its \emph{rank} \cite[\prop3.5.12]{Ribes2010a}. In particular, every profinite group $G$ admits a map $A\to G$ converging to 1 for some set $A$, and we denote by $\dee(G)$ the smallest cardinality of such a set. Here is a statement for Tate's theorem extracted from the proof found in Fried and Jarden's book \cite[\prop22.7.6]{Fried2008}. 
\begin{theorem}[Tate]\label{t:tate}
    Let $G$ be a projective pro-$p$ group. Then, $G$ is isomorphic to $\freepro_p(\dee(G))$, the free pro-$p$ group of rank $\dee(G)$.
\end{theorem}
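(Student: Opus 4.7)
The plan is to realize $G$ as a closed retract of $\freepro_p(\dee(G))$ and then show that the retraction is in fact an isomorphism. First, by the definition of $\dee(G)$, I would fix a map $A \to G$ converging to $1$ with $\card(A) = \dee(G)$ whose extension to the free pro-$p$ group is an epimorphism $\pi \from \freepro_p(\dee(G)) \to G$. Applying projectivity of $G$ to $\pi$ and $\id_G$ then produces a continuous section $s \from G \to \freepro_p(\dee(G))$ of $\pi$, identifying $G$ with a closed subgroup of $F := \freepro_p(\dee(G))$ and giving $F = s(G) \cdot \ker(\pi)$ with trivial intersection.

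Next I would show $s$ is surjective via the Burnside basis theorem for pro-$p$ groups: a closed subgroup $H$ of $F$ with $H \cdot \Phi(F) = F$ must coincide with $F$, where $\Phi(F) := \overline{F^{p}[F,F]}$ denotes the Frattini subgroup. Applied to $H = s(G)$, this reduces to the claim that the induced map $\bar{s} \from G/\Phi(G) \to F/\Phi(F)$ is surjective, or equivalently that $\ker(\pi) \subseteq \Phi(F)$. Both $F/\Phi(F)$ and $G/\Phi(G)$ are elementary abelian pro-$p$ groups of topological rank $\dee(G)$, and $\bar{s}$ admits as left inverse the surjection $\bar{\pi}$ induced by $\pi$. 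When $\dee(G)$ is finite, a dimension count immediately forces $\bar{\pi}$ to be an isomorphism, so $\ker(\pi) \subseteq \Phi(F)$, and Burnside delivers $s(G) = F$.

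The main obstacle will be the infinite-rank case, since a surjection between profinite $\ff_p$-vector spaces of equal topological rank need not be injective: the section merely decomposes $F/\Phi(F)$ as $\bar{s}(G/\Phi(G)) \oplus \ker(\bar{\pi})$, without ruling out $\ker(\bar{\pi}) \neq 0$. A cohomological detour handles this cleanly: projectivity of a pro-$p$ group is equivalent to $H^{2}(G, \ff_p) = 0$, and the five-term exact sequence of the extension $1 \to \ker(\pi) \to F \to G \to 1$, combined with the minimality of the presentation, identifies $H^{2}(G, \ff_p)$ with the Pontryagin dual of the $G$-coinvariants of $\ker(\pi)/\Phi(\ker(\pi))$. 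The vanishing of $H^{2}$ then forces $\ker(\pi) = \Phi(\ker(\pi))$ by a Nakayama-type argument, and since every non-trivial pro-$p$ group has a proper Frattini subgroup, we conclude $\ker(\pi) = 1$.
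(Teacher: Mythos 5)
The paper itself does not prove this statement---it is imported verbatim from Fried and Jarden \cite[\prop22.7.6]{Fried2008}---so your argument has to stand on its own. Your first two paragraphs are the right (and, up to the choice of presentation, the standard) argument, and they do settle the case where $\dee(G)$ is finite. The gap is exactly where you locate it, but your cohomological repair does not close it: it is circular. To identify $H^2(G,\ff_p)$ with the Pontryagin dual of the $G$-coinvariants of $\ker(\pi)/\Phi(\ker(\pi))$ from the five-term sequence $0\to H^1(G,\ff_p)\to H^1(F,\ff_p)\to H^1(\ker(\pi),\ff_p)^G\to H^2(G,\ff_p)\to H^2(F,\ff_p)=0$, you need the restriction map $H^1(F,\ff_p)\to H^1(\ker(\pi),\ff_p)^G$ to vanish, which by exactness means inflation $H^1(G,\ff_p)\to H^1(F,\ff_p)$ is surjective, which is equivalent to $\bar\pi$ being injective, i.e.\ to $\ker(\pi)\subseteq\Phi(F)$---precisely the statement you concede cannot be deduced from $\card(A)=\dee(G)$ alone when $\dee(G)$ is infinite. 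And indeed it fails for some minimal-cardinality generating sets: if $G$ is itself free pro-$p$ on a basis $x_1,x_2,\dots$ converging to $1$, the set $\{x_1x_2,x_1,x_2,x_3,\dots\}$ also converges to $1$, has cardinality $\dee(G)$, and generates, yet the element of $F$ spelling ``first generator times (third generator)$^{-1}$ times (second generator)$^{-1}$'' lies in $\ker(\pi)\setminus\Phi(F)$. Without the vanishing of restriction, the five-term sequence only yields that restriction is surjective onto $H^1(\ker(\pi),\ff_p)^G$, which is perfectly consistent with $\ker(\pi)\neq 1$ (consider any non-minimal presentation of $\zz_p$).

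The repair is to choose the presentation more carefully rather than to add cohomology. Since $G/\Phi(G)$ is a profinite $\ff_p$-vector space, it is isomorphic to $(\zz/p\zz)^{\dee(G)}$ with its standard basis converging to $0$ (this is the content of \cite[\lem22.7.4]{Fried2008}); lift that basis along $G\to G/\Phi(G)$ to a map $A\to G$ converging to $1$ using the standard lifting lemma for convergent maps along epimorphisms of profinite groups. The lifted set generates $G$ by the Frattini argument, and now $\bar\pi\from F/\Phi(F)\to G/\Phi(G)$ is an isomorphism \emph{by construction}, not by a rank count. At that point your second paragraph already finishes the proof in every cardinality: $\bar{s}=\bar\pi^{-1}$ is surjective, so $s(G)\Phi(F)=F$, Burnside gives $s(G)=F$, and $s$ is injective because $\pi s=\id_G$. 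This is essentially the argument in Fried and Jarden, and it makes the entire third paragraph unnecessary.
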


Let $G$ be a profinite group and $p$ be a prime. A \emph{$p$-Sylow subgroup} of $G$ is a closed pro-$p$ subgroup $H\leq G$ such that $[G:H]$ is coprime to $p$. (The definition of the index $[G:H]$ may be recalled in \cite[\S2.3]{Ribes2010a}). It is well known that $G$ is pronilpotent if and only if it has, for every prime $p$, a unique $p$-Sylow subgroup, which we denote $G_p$ \cite[\prop2.3.8]{Ribes2010a}. Moreover, in that case, $G=\prod_pG_p$ where $p$ ranges over all primes. In the next lemma, we record a simple observation which will prove useful in the sequel. Let us write $R_p$ in place of $R_{\var{G}_p}$ for every prime $p$ (so $R_p(G)$ denotes the intersection of the clopen normal subgroups $N\trianglelefteq G$ such that $G/N\in\var{G}_p$).
\begin{lemma}\label{l:sylow}
    Let $G$ be a pronilpotent group. For every prime $p$, the $p$-Sylow subgroup $G_p$ is isomorphic to $\mqf{p}(G)$. In particular, $G$ is isomorphic to $\prod_p\mqf{p}(G)$ where $p$ ranges over all primes. 
\end{lemma}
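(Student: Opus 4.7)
My plan is to use the already-cited fact that a pronilpotent group decomposes as the product of its $p$-Sylow subgroups, and then identify the kernel of the projection onto the $p$-component with $R_p(G)$.

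First, I would fix a prime $p$ and consider the canonical projection $\pi_p \from G \to G_p$ coming from the decomposition $G = \prod_q G_q$ (the product running over all primes). Since $G_p$ is pro-$p$, the universal property of $\mqu{p}_G$ (\prop\ref{p:adjoint}) gives a continuous epimorphism $\bar\pi_p \from \mqf{p}(G)\to G_p$ with $\bar\pi_p \mqu{p}_G = \pi_p$. To show this is an isomorphism, it suffices to show $\ker(\pi_p) = R_p(G)$, or equivalently that $\prod_{q\neq p}G_q = R_p(G)$.

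The inclusion $R_p(G)\subseteq\prod_{q\neq p}G_q$ is immediate: the quotient $G/\prod_{q\neq p}G_q\cong G_p$ is pro-$p$, so by definition of $R_p(G)$ as the intersection of all clopen normal subgroups with pro-$p$ quotient (in the version using a neighborhood basis of $1$), we get the inclusion. For the reverse inclusion, take any clopen normal $N\trianglelefteq G$ with $G/N$ a finite $p$-group. For $q\neq p$, the image of $G_q$ in $G/N$ is simultaneously a continuous image of a pro-$q$ group and a subgroup of a finite $p$-group, hence trivial. Therefore $G_q\subseteq N$ for every $q\neq p$, and since $N$ is closed we conclude $\prod_{q\neq p}G_q\subseteq N$. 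Intersecting over all such $N$ gives $\prod_{q\neq p}G_q\subseteq R_p(G)$.

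The \textquotedblleft in particular\textquotedblright{} clause then follows by combining the identification $G_p\isom\mqf{p}(G)$ with the pronilpotent decomposition $G=\prod_p G_p$. The main obstacle, such as it is, is the reverse inclusion $\prod_{q\neq p}G_q\subseteq R_p(G)$; everything hinges on the observation that a continuous homomorphism from a pro-$q$ group to a finite $p$-group (with $q\neq p$) is trivial, which itself reduces to the fact that a finite $q$-group has no nontrivial homomorphism to a $p$-group when $q\neq p$.
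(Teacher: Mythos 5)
Your proof is correct and follows essentially the same route as the paper: both identify the kernel of the projection $G\to G_p$ with $R_p(G)$, using on one side that $G_p$ is pro-$p$ and on the other that the image of the complementary factor in any finite $p$-group quotient must be trivial for order (coprimality) reasons. The only cosmetic difference is that you argue prime-by-prime with the factors $G_q$, whereas the paper treats the whole kernel $N\cong G/G_p$ at once via its order being coprime to $p$.
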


\begin{proof}
    Fix a prime $p$ and let $N$ be the kernel of the component projection $G\to G_p$. Since $G/N\isom G_p$ is pro-$p$, we have $R_p(G)\subseteq N$. Let $M\trianglelefteq G$ be a clopen normal subgroup such that $G/M$ is a finite $p$-group. Then, $N/(N\cap M)\isom (MN)/M$ is a subgroup of $G/M$, hence it is also a finite $p$-group. Note however that $N\isom G/G_p$, so the order of $N$ is coprime to $p$. Hence, $N/(N\cap M)$ is trivial and $N\subseteq M$. This shows that $N\subseteq R_p(G)$, finishing the proof.
\end{proof}

We now give our main result for this section. Let $\ab{p}$ be the pseudovariety of finite elementary Abelian $p$-groups. Given a profinite group $G$, we abbreviate $\dee(\mqf{\ab{p}}(G))$ by $\dee_p(G)$.
\begin{proposition}\label{p:nilprojective}
    For every projective profinite group $G$, we have
    \begin{equation*}
        \mqf{\nil}(G) \isom \prod_p\freepro_p(\dee_p(G)),
    \end{equation*}
    where $p$ ranges over all primes.
\end{proposition}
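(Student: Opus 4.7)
The plan is to combine a pronilpotent-to-pro-$p$ decomposition with Tate's theorem applied componentwise. First I would establish the decomposition $\mqf{\nil}(G)\isom\prod_p\mqf{p}(G)$: since $\var{G}_p\subseteq\var{G}_\nil$, \lem\ref{l:composition} gives $\mqf{p}\mqf{\nil}\isom\mqf{p}$ naturally, and applying \lem\ref{l:sylow} to the pronilpotent group $\mqf{\nil}(G)$ produces
\begin{equation*}
    \mqf{\nil}(G)\isom\prod_p\mqf{p}(\mqf{\nil}(G))\isom\prod_p\mqf{p}(G).
\end{equation*}

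Next I would verify that each $\mqf{p}(G)$ is a projective pro-$p$ group. Given a surjection $\psi\from K\to H$ of pro-$p$ groups and a morphism $\varphi\from\mqf{p}(G)\to H$, precomposing with $\mqu{p}_G$ and lifting through $\psi$ by projectivity of $G$ yields $\tilde\alpha\from G\to K$ with $\psi\tilde\alpha=\varphi\mqu{p}_G$. Since $K$ is pro-$p$, \prop\ref{p:adjoint} factors $\tilde\alpha=\alpha\mqu{p}_G$ for some $\alpha\from\mqf{p}(G)\to K$; surjectivity of $\mqu{p}_G$ then forces $\psi\alpha=\varphi$. Tate's theorem (\theo\ref{t:tate}) now gives $\mqf{p}(G)\isom\freepro_p(\dee(\mqf{p}(G)))$. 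To identify this rank with $\dee_p(G)$, I apply \lem\ref{l:composition} with $\var{H}=\ab{p}$ and $\var{K}=\var{G}_p$ (using $\ab{p}\subseteq\var{G}_p$) to get $\mqf{\ab{p}}(\mqf{p}(G))\isom\mqf{\ab{p}}(G)$, and invoke the pro-$p$ version of the Burnside basis theorem, namely $\dee(H)=\dee(\mqf{\ab{p}}(H))$ for any pro-$p$ group $H$, to conclude $\dee(\mqf{p}(G))=\dee_p(G)$.

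The main delicate point is the projectivity transfer from $G$ to its pro-$p$ quotient, since Tate's theorem requires projectivity in the pro-$p$ sense while the hypothesis supplies projectivity in the ambient category of all profinite groups. The argument above hinges on surjectivity of the unit $\mqu{p}_G$ together with the universal property recorded in \prop\ref{p:adjoint}, which together allow one to descend a lift of $\varphi\mqu{p}_G$ to a genuine lift of $\varphi$. Everything else is a bookkeeping exercise in stringing together the categorical facts from \S\ref{ss:def}.
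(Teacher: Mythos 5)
Your proof is correct and takes essentially the same route as the paper: the decomposition $\mqf{\nil}(G)\isom\prod_p\mqf{p}(G)$ via Lemmas~\ref{l:composition} and~\ref{l:sylow}, Tate's theorem applied to each factor, and the identification $\dee(\mqf{p}(G))=\dee(\mqf{\ab{p}}(G))=\dee_p(G)$ via \lem\ref{l:composition} and the Frattini-type equality $\dee(H)=\dee(\mqf{\ab{p}}(H))$. The only difference is that where the paper simply cites Fried and Jarden for the projectivity of $\mqf{p}(G)$, you prove it by the standard lift-then-factor-through-the-unit argument, which is correct (and the pro-$p$ projectivity you obtain is exactly what Tate's theorem needs).
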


\begin{proof}
    By Lemmas~\ref{l:composition} and \ref{l:sylow}, we have that $\mqf{\nil}(G)$ is isomorphic to the product $\prod_p\mqf{p}(G)$ for $p$ ranging over all primes. Fix a prime $p$ and let $H$ stand for $\mqf{p}(G)$; it suffices to show that $H\isom\freepro_p(\dee_p(G))$. Since $G$ is projective, so is $H$ \cite[\prop22.4.8]{Fried2008}. By Tate's theorem (\theo\ref{t:tate}), it follows that $H\isom\freepro_p(\dee(H))$. On the one hand, we have $\dee(H) = \dee(\mqf{\ab{p}}(H))$ \cite[\lem22.7.4]{Fried2008}, while on the other hand, \lem\ref{l:composition} implies that $\mqf{\ab{p}}(H)\isom\mqf{\ab{p}}(G)$, hence $\dee(\mqf{\ab{p}}(H)) = \dee_p(G)$.
\end{proof} 

\begin{remark}\label{r:dim}
    A further consequence of \cite[\lem22.7.4]{Fried2008} is that $\mqf{\ab{p}}(G)$ and $(\zz/p\zz)^{\dee_p(G)}$ are isomorphic as elementary Abelian $p$-groups. If $G$ is finitely generated, then $\dee_p(G)$ is finite for every $p$ and it also gives the dimension of $\mqf{\ab{p}}(G)$ as a vector space over $\zz/p\zz$. This fails when $\dee_p(G)$ is infinite \cite[\rem22.7.5]{Fried2008}. 
\end{remark}

The decomposition of the maximal pronilpotent quotient above leads to the characterization of pronilpotent quotients below. Let us say that a profinite group $G$ is \emph{$\mathfrak{m}$-generated}, for a cardinal $\mathfrak{m}$, if there is a map $\mathfrak{m}\to G$ converging to 1 whose image generates a dense subgroup of $G$.
\begin{corollary}\label{c:description}
    Let $G$ be a projective profinite group and $H$ be a pronilpotent group. Then, $H$ is a continuous homomorphic image of $G$ if and only if for every prime $p$, the $p$-Sylow subgroup of $H$ is $\dee_p(G)$-generated.
\end{corollary}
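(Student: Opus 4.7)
The plan is to reduce both implications to Proposition \ref{p:nilprojective}, exploiting two structural facts: on the one hand, that pronilpotent groups split as the product of their $p$-Sylow subgroups (mentioned just before \lem\ref{l:sylow}), and on the other hand, that every morphism from $G$ to a pro-$p$ group factors through $\mqf{p}(G)$ by \prop\ref{p:adjoint}.

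For the forward direction, suppose $\varphi \from G \to H$ is a continuous surjection. For each prime $p$, compose with the projection $\pi_p \from H \to H_p$ onto the $p$-Sylow. Since $H_p$ is pro-$p$, the universal property of $\mqu{p}_G$ produces a continuous morphism $\mqf{p}(G) \to H_p$ through which $\pi_p\varphi$ factors, and this morphism is surjective because $\pi_p\varphi$ is. From the proof of \prop\ref{p:nilprojective} we have $\mqf{p}(G) \isom \freepro_p(\dee_p(G))$, which is $\dee_p(G)$-generated by construction. A routine check (push the image of a converging map through the quotient) shows that any continuous homomorphic image of an $\mathfrak{m}$-generated profinite group is $\mathfrak{m}$-generated, so $H_p$ is $\dee_p(G)$-generated.

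For the converse, assume $H_p$ is $\dee_p(G)$-generated for each prime $p$. Then there is a map from a set of cardinality $\dee_p(G)$ into $H_p$ which converges to $1$ and whose image generates a dense subgroup; by the universal property of $\freepro_p$ together with closedness of the image of a compact set, this yields a continuous surjection $\freepro_p(\dee_p(G)) \twoheadrightarrow H_p$. Taking the direct product over all primes gives a continuous surjection
\begin{equation*}
    \prod_p \freepro_p(\dee_p(G)) \twoheadrightarrow \prod_p H_p = H,
\end{equation*}
where the equality uses the Sylow decomposition of the pronilpotent group $H$. By \prop\ref{p:nilprojective}, the left-hand side is isomorphic to $\mqf{\nil}(G)$, so composing with $\mqu{\nil}_G$ exhibits $H$ as a continuous homomorphic image of $G$.

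I do not expect a genuine obstacle: the main statement is essentially a repackaging of \prop\ref{p:nilprojective}. The only points requiring a moment of care are the verification that a product of continuous surjections between pronilpotent groups remains a continuous surjection (immediate from coordinatewise surjectivity and compactness), and the standard fact that $\mathfrak{m}$-generation passes to continuous homomorphic images.
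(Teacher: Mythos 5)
Your proposal is correct and follows essentially the same route as the paper: both directions reduce to the identification $\mqf{p}(G)\isom\freepro_p(\dee_p(G))$ from the proof of \prop\ref{p:nilprojective}, together with the Sylow decomposition $H=\prod_pH_p$ (the paper phrases the forward direction via functoriality of $\mqf{p}$ and \lem\ref{l:sylow} rather than via composing with the projections $H\to H_p$, but this is the same argument). The auxiliary facts you flag — that $\mathfrak{m}$-generation passes to continuous quotients and that products of surjections are surjective — are exactly the routine points the paper also uses implicitly.
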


\begin{proof}
    If $\psi\from G\to H$ is a surjective morphism of profinite groups, then so is $\mqf{p}(\psi)\from\mqf{p}(G)\to\mqf{p}(H)$, for every prime $p$. By the proof of the previous proposition, $\dee(\mqf{p}(G))=\dee_p(G)$, while by \lem\ref{l:sylow}, $\mqf{p}(H)$ is isomorphic to the $p$-Sylow subgroup $H_p$, hence $H_p$ is indeed $\dee_p(G)$-generated.

    On the other hand, assume that for every prime $p$, $H_p$ is $\dee_p(G)$-generated. Then, the proof of the previous proposition shows that $\mqf{p}(G)\isom\freepro(\dee_p(G))$, hence there is a surjective morphism of profinite groups $\psi_p\from\mqf{p}(G)\to H_p$. Since $H$ is the product of its Sylow subgroups, $\psi=\prod_p\psi_p$ gives a surjective morphism $\prod_p\mqf{p}(G)\to H$. The result follows since $\prod_p\mqf{p}(G)\isom\mqf{\nil}(G)$ is itself a continuous homomorphic image of $G$.
\end{proof}

\section{\texorpdfstring{$\omega$}{omega}-presented groups}
\label{s:omega}

In this section, we introduce $\omega$-presented groups (\S\ref{ss:omega}) and give a formula for the dimensions of the vector spaces $\mqf{\ab{p}}(G)$, where $G$ is an $\omega$-presented group (\S\ref{ss:dimension}). We then proceed to deduce a number of things about the structure of $\omega$-presented groups in \S\S\ref{ss:perfect} and \ref{ss:freeness}. 

\subsection{\texorpdfstring{$\omega$}{omega}-presentations}
\label{ss:omega}

Let $A$ be a set and $R$ be a subset of $\freepro(A)$. Denote by $N(R)$ the closed normal subgroup of $\freepro(A)$ generated by $R$. A \emph{presentation} of a profinite group $G$ is a pair $(A,R)$ with $A$ and $R$ as above and $\freepro(A)/N(R)\isom G$. We write $G \isom \langle A \mid R \rangle$. We call $A$ the set of \emph{generators} and $R$ the set of \emph{relators}. 

Projective profinite groups are also characterized by a special kind of presentation (\prop\ref{p:presentations}). This was first noticed by Lubotzky \cite[\prop1.1]{Lubotzky2001} and later extended by Almeida and Costa to the setting of profinite semigroups \cite[\prop2.4]{Almeida2013}. Both sources work with finitely generated objects, but for profinite groups, the characterization holds in full generality. We start with a lemma.
\begin{lemma}\label{l:presentations}
    Let $A$ be a set and $\psi$ be a continuous endomorphism of $\freepro(A)$. If $\psi$ is idempotent, then $\img(\psi) \isom \langle A \mid \psi(a)a^{-1} \given a\in A\rangle$.
\end{lemma}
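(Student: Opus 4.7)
The plan is to exhibit inverse continuous homomorphisms between $\img(\psi)$ and $F = \freepro(A)/N(R)$, where $R = \{\psi(a)a^{-1} : a \in A\}$, using the identity $\psi^2 = \psi$ in an essential way.

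First I would record the standard observation that an idempotent endomorphism is the identity on its image: if $y = \psi(x)$, then $\psi(y) = \psi^2(x) = \psi(x) = y$, and conversely any fixed point lies in the image. So $\img(\psi) = \{x \in \freepro(A) : \psi(x) = x\}$, and in particular $\psi(\psi(a)) = \psi(a)$ for every $a \in A$.

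Next I would build a map $\bar\phi \from F \to \img(\psi)$ from $\psi$ itself. Viewing $\psi$ as a continuous morphism $\freepro(A) \to \img(\psi)$ (surjective by construction), I would check that each relator lies in its kernel: for $a \in A$,
\begin{equation*}
    \psi(\psi(a) a^{-1}) = \psi^2(a)\psi(a)^{-1} = \psi(a)\psi(a)^{-1} = 1.
\end{equation*}
Since $\ker\psi$ is a closed normal subgroup containing $R$, it contains $N(R)$, so $\psi$ factors through a continuous surjection $\bar\phi \from F \to \img(\psi)$ sending $aN(R) \mapsto \psi(a)$.

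In the other direction I would use that $\img(\psi)$ is closed in $\freepro(A)$ (as the continuous image of a compact space), so the inclusion $\iota \from \img(\psi) \hookrightarrow \freepro(A)$ composed with the quotient $q \from \freepro(A) \to F$ yields a continuous morphism $\sigma \from \img(\psi) \to F$. To show $\sigma$ and $\bar\phi$ are mutually inverse, it suffices by continuity to check the identities on a generating set. For $x \in \img(\psi)$, the fixed-point description gives $\bar\phi(\sigma(x)) = \psi(x) = x$. For $a \in A$, one has $\sigma(\bar\phi(aN(R))) = \psi(a) N(R)$, which equals $aN(R)$ precisely because $\psi(a)a^{-1} \in R \subseteq N(R)$. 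Since the cosets $aN(R)$ topologically generate $F$, this forces $\sigma \bar\phi = \id_F$, completing the isomorphism.

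I do not anticipate a real obstacle: the argument is essentially the universal property of the presentation plus the elementary fact that idempotents restrict to the identity on their image. The only point requiring a moment of care is verifying that $\img(\psi)$ is genuinely a closed (hence profinite) subgroup, so that $\sigma$ is a legitimate morphism in $\pro{\var{G}}$, and that the topologically generated subgroup $\langle aN(R) : a \in A\rangle$ is dense in $F$ — both of which are routine.
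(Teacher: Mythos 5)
Your proof is correct and rests on the same two pillars as the paper's: idempotence gives $R\subseteq\ker(\psi)$, and the reverse direction reduces to checking the relation $\psi(a)a^{-1}\in N(R)$ on the generating set $A$ via a closed-subgroup/density argument (your appeal to agreement of continuous homomorphisms on a topologically generating set is exactly the paper's verification that $H=\{x : \psi(x)x^{-1}\in N(R)\}$ is a closed subgroup containing $A$). Packaging the argument as a pair of explicit mutually inverse morphisms rather than as the equality $N(R)=\ker(\psi)$ is only a cosmetic difference.
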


\begin{proof}
    Letting $R=\{\psi(a)a^{-1} \given a\in A\}$, it is enough to show that $N(R)=\ker(\psi)$. It follows from the idempotence of $\psi$ that $\ker(\psi)=\{\psi(x)x^{-1} \given x\in\freepro(A)\}$, hence $N(R)\subseteq\ker(\psi)$. Showing that the remaining inclusion holds amounts to establishing that $H=\{x\in\freepro(A) \given \psi(x)x^{-1}\in N(R)\}$ is the whole of $\freepro(A)$. Equivalently, we have to show that $H$ is a closed subgroup of $\freepro(A)$ that contains $A$. That $H$ is closed follows readily from the fact that so is $N(R)$, together with the continuity of $\psi$ and basic properties of compact groups. That $H$ contains $A$ follows from its definition. Finally, for $x, y\in H$, we find that
    \begin{equation*}
        \psi(x^{-1}y)(x^{-1}y)^{-1} = \psi(x^{-1})\psi(y)y^{-1}x = x^{-1}(\psi(x)x^{-1})^{-1}(\psi(y)y^{-1})x,
    \end{equation*}
    and since $N(R)$ is a normal subgroup of $\freepro(A)$, we have $x^{-1}y\in H$.
\end{proof}

\begin{proposition}\label{p:presentations}
    Let $A$ be a set of cardinality $\mathfrak{m}$ and $G$ be an $\mathfrak{m}$-generated profinite group. Then, $G$ is projective if and only if $G\isom\langle A \mid \psi(a)a^{-1} \given a\in A\rangle$, where $\psi$ is a continuous idempotent endomorphism of $\freepro(A)$.
\end{proposition}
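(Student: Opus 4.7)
The plan is to split the equivalence into the two standard directions and to exploit Lemma~\ref{l:presentations} for the reverse direction and the lifting property of projective groups for the forward direction.

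For the ``if'' direction, suppose $G \isom \langle A \mid \psi(a)a^{-1} \given a\in A\rangle$ with $\psi$ a continuous idempotent endomorphism of $\freepro(A)$. By \lem\ref{l:presentations}, $G \isom \img(\psi)$. Idempotence of $\psi$ means that $\img(\psi)$ is a retract of $\freepro(A)$ (the inclusion $\img(\psi)\hookrightarrow\freepro(A)$ has a left inverse given by the corestriction of $\psi$). Since $\freepro(A)$ is free profinite and therefore projective, and since retracts of projective profinite groups are projective (an immediate diagram chase), $G$ is projective.

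For the ``only if'' direction, assume $G$ is projective and $\mathfrak{m}$-generated. The latter gives a map $A \to G$ converging to 1 whose image is dense. By the universal property of $\freepro(A)$ with respect to maps converging to $1$, this extends to a continuous morphism $\pi\from \freepro(A)\to G$, and density of the image together with compactness gives surjectivity of $\pi$. Applying projectivity of $G$ to the lifting problem defined by $\id_G\from G\to G$ and $\pi\from \freepro(A)\to G$, we obtain a continuous section $s\from G\to\freepro(A)$ with $\pi s = \id_G$. Set $\psi = s\pi$. Then $\psi$ is a continuous endomorphism of $\freepro(A)$ and $\psi^2 = s(\pi s)\pi = s\pi = \psi$, so $\psi$ is idempotent. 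Moreover $\img(\psi) = s(G)$, and since $\pi s = \id_G$ the map $s$ is injective, so $G\isom\img(\psi)$. Applying \lem\ref{l:presentations} yields $G \isom \langle A \mid \psi(a)a^{-1} \given a\in A\rangle$.

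Neither direction involves a serious obstacle: the main thing to be careful about is the cardinality bookkeeping in the ``only if'' direction, namely that ``$\mathfrak{m}$-generated'' is precisely what is needed to produce the surjection $\pi\from\freepro(A)\to G$ from the correct set $A$, so that the resulting idempotent $\psi$ lives on the nose on $\freepro(A)$ rather than on some free profinite group of different rank. Once that is in place, the use of the projective lifting property and of \lem\ref{l:presentations} is routine.
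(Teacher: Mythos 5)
Your proof is correct and takes essentially the same route as the paper's: both directions reduce to \lem\ref{l:presentations}, and the converse is built exactly as in the paper by choosing a surjection $\pi\from\freepro(A)\to G$ from the $\mathfrak{m}$-generation hypothesis, lifting $\id_G$ through it to get a section $s$, and setting $\psi=s\pi$. The only cosmetic difference is in the ``if'' direction, where you justify projectivity of $\img(\psi)$ by the (genuinely elementary) retract argument, whereas the paper cites the result that closed subgroups of free profinite groups are projective; both are valid.
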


\begin{proof}
    Suppose that $G\isom\langle A \mid \psi(a)a^{-1} \given a\in A\rangle$ for some continuous idempotent endomorphism $\psi$ of $\freepro(A)$. By the previous lemma, this means that $G\isom\img(\psi)$, hence $G$ is isomorphic to a closed subgroup of $\freepro(A)$. Therefore, it must be projective \cite[\lem7.6.3]{Ribes2010a}. Conversely, assume that $G$ is projective. Since $G$ is $A$-generated, there is a surjective morphism of profinite groups $\alpha\from\freepro(A)\to G$. By projectivity, there is a morphism of profinite groups $\beta\from G\to\freepro(A)$ such that $\alpha\beta=\id_G$. Let $\psi$ be the composite $\beta\alpha$. Plainly, $\psi$ is an idempotent endomorphism and $\ker(\psi)=\ker(\alpha)$. Hence, $G=\img(\alpha)\isom\img(\psi)$ and the previous lemma concludes the proof.
\end{proof}   

We now restrict our attention to an even more specialized form of presentation. First, recall that if $G$ is a finitely generated profinite group, then $\en(G)$, the space of continuous endomorphisms of $G$ equipped with composition and the pointwise topology, is a profinite monoid \cite[\prop1]{Hunter1983}. In particular, for every endomorphism $\psi\in\en(G)$, the sequence $(\psi^n)_{n\geq 1}$ has a unique idempotent accumulation point given by $\psi^\omega=\lim_n\psi^{n!}$ \cite[\prop3.7.2 and~3.9.2]{Almeida2020a}. Given a finite set $A$, let $\free(A)$ denote the free group over $A$ and $\en(\free(A))$ be the set of endomorphisms of $\free(A)$. Viewing $\free(A)$ as a subgroup of $\freepro(A)$, it follows from the universal property of $\freepro(A)$ that every $\varphi\in\en(\free(A))$ admits a continuous extension $\widehat{\varphi}\in\en(\freepro(A))$.

\begin{definition}[$\omega$-presented groups]
    A profinite group $G$ is called \emph{$\omega$-presented} when it admits a presentation of the form $G \isom \langle A \mid \widehat{\varphi}^\omega(a)a^{-1} \given a\in A\rangle$, where $A$ is a finite set and $\varphi\in\en(\free(A))$. We then say that $\varphi$ \emph{defines an $\omega$-presentation of} $G$.
\end{definition} 
    
We emphasize that $\omega$-presented groups are finitely generated by definition. While it clearly follows from \prop\ref{p:presentations} above that every $\omega$-presented group is projective, it does not hold that every projective profinite group is $\omega$-presented. First and most obviously for not all projective profinite groups are finitely generated, as $\omega$-presented groups must be. But second and perhaps more interestingly, no $\omega$-presented group is a pro-$p$ group (\S\ref{ss:perfect}).

\subsection{Dimension formula}
\label{ss:dimension}

Following \S\ref{ss:tate}, the maximal pronilpotent quotient of a projective profinite group $G$ is completely determined by the cardinals $\dee_p(G)$, which in the finitely generated case agree, for each prime $p$, with the dimension of $\mqf{\ab{p}}(G)$ as a vector space over $\zz/p\zz$. \prop\ref{p:dimformula} below gives a simple formula for these dimensions in case $G$ is $\omega$-presented, which we call the \emph{dimension formula}. 

Before stating this proposition, we need to set up some notation. Let $\varphi$ be an endomorphism of $\free(A)$, where $A$ is a finite set. For every $a\in A$, let $|-|_a\from\free(A)\to\zz$ be the group homomorphism defined on $b\in A$ by $|b|_a = 1$ if $a=b$ and $|b|_a=0$ otherwise. The \emph{incidence matrix} of $\varphi$ is the $A\times A$ matrix over $\zz$ defined by
\begin{equation*}
    \mat{\varphi}(a,b) = |\varphi(b)|_a,\quad a,b \in A.
\end{equation*}
Given a prime $p$, we denote by \mat{p,\varphi} the matrix over $\zz/p\zz$ obtained by reducing modulo $p$ the coefficients of \mat{\varphi}. We define the characteristic polynomial of a square matrix $M$ by $\poly{}(x)=\det(x-M)$, with the convention that $\poly{}=1$ when $M$ is the empty matrix. We denote by \poly{\varphi} and \poly{p,\varphi} respectively the characteristic polynomial of \mat{\varphi} and \mat{p,\varphi}. Given a polynomial $\xi$ of degree $n$, we let $\xi^\rev$ be its \emph{reciprocal polynomial}, defined by $\xi^\rev(x) = x^n\xi(x^{-1})$.  We also call $\poly{\varphi}$ and $\poly{\varphi}^\rev$ the \emph{characteristic polynomial} and \emph{reciprocal characteristic polynomial} of $\varphi$. We record the following observations for future use.
\begin{remark}\label{r:pseudodet}
    Let $\kk$ be an algebraically closed field and $M$ be a square matrix over $\kk$. Let $\chi$ be the characteristic polynomial of $M$. Recall that $\chi$ splits over $\kk$ and that its roots are precisely the eigenvalues of $M$ in $\kk$. By Vieta's formulas, the degree of $\chi^\rev$ is the number of non-zero eigenvalues of $M$ counted with multiplicity. Moreover, up to a sign, the leading coefficient of $\chi^\rev$ is the product, taken with multiplicities, of the non-zero eigenvalues of $M$. This quantity is sometimes known as the \emph{pseudodeterminant} of $M$, and we denote it by $\pdet(M)$.
\end{remark}

\begin{proposition}[Dimension formula]\label{p:dimformula}
    Let $\varphi\in\en(\free(A))$ define an $\omega$\hyp{}presentation of a profinite group $G$. The dimension of $\mqf{\ab{p}}(G)$ over $\zz/p\zz$ is $\deg(\poly{p,\varphi}^\rev)$.
\end{proposition}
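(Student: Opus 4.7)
The plan is to reduce the statement to a linear algebra computation by pushing $G$ through the functor $\mqf{\ab{p}}$. By \lem\ref{l:presentations}, $G$ is isomorphic to $\img(\widehat\varphi^\omega)$, and the idempotence of $\widehat\varphi^\omega$ exhibits $G$ as a retract of $\freepro(A)$: writing $q\from\freepro(A)\to G$ for the projection and $\iota\from G\to\freepro(A)$ for the inclusion, we have $q\iota=\id_G$ and $\iota q=\widehat\varphi^\omega$. Since $\mqf{\ab{p}}$ is a functor it preserves retracts, so $\mqf{\ab{p}}(G)$ is the image of the idempotent $\mqf{\ab{p}}(\widehat\varphi^\omega)$ acting on $\mqf{\ab{p}}(\freepro(A))$, which by \lem\ref{l:free} is the free elementary abelian $p$-group $V=(\zz/p\zz)^A$.

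Next I would identify this idempotent with $\bar\varphi^\omega$, where $\bar\varphi=\mqf{\ab{p}}(\widehat\varphi)$ and the $\omega$-power is taken in the finite monoid $\en(V)$. By functoriality, $\mqf{\ab{p}}(\widehat\varphi^{n!})=\bar\varphi^{n!}$, and the latter stabilizes to $\bar\varphi^\omega$ for large $n$. On the other hand, $\widehat\varphi^{n!}\to\widehat\varphi^\omega$ pointwise in the profinite topology of $\freepro(A)$, so applying the continuous map $\mqu{\ab{p}}_{\freepro(A)}$ and using the intertwining $\mqu{\ab{p}}\widehat\varphi=\bar\varphi\mqu{\ab{p}}$ yields $\bar\varphi^{n!}(\mqu{\ab{p}}(a))\to\mqf{\ab{p}}(\widehat\varphi^\omega)(\mqu{\ab{p}}(a))$ for each $a\in A$. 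Comparing with the stable value forces agreement on generators, hence everywhere. In the basis $\{\mqu{\ab{p}}_{\freepro(A)}(a)\given a\in A\}$ of $V$, the linear map $\bar\varphi$ is then represented by $\mat{p,\varphi}$ (up to a transpose, which does not affect the characteristic polynomial).

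The final step is a Fitting decomposition: write $V=V_0\oplus V_1$ with $\bar\varphi$ nilpotent on $V_0$ and invertible on $V_1$. Since $\bar\varphi|_{V_1}$ lies in the finite group $\gl(V_1)$, its idempotent power $\bar\varphi^\omega$ restricts to the identity on $V_1$ and to zero on $V_0$, giving $\img(\bar\varphi^\omega)=V_1$. Combining with the previous paragraph, $\dim_{\zz/p\zz}\mqf{\ab{p}}(G)=\dim V_1$, which equals the number of non-zero eigenvalues of $\mat{p,\varphi}$ in the algebraic closure of $\zz/p\zz$ counted with multiplicity. By \rem\ref{r:pseudodet}, this number is precisely $\deg(\poly{p,\varphi}^\rev)$.

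The step I expect to require the most care is the identification $\mqf{\ab{p}}(\widehat\varphi^\omega)=\bar\varphi^\omega$: it amounts to saying that the abelianization functor commutes with the $\omega$-power operation, and while it is essentially a continuity-of-$\mqf{\ab{p}}$ statement on morphisms, it does need to be spelled out using the finiteness of $\en(V)$ and the continuity of $\mqu{\ab{p}}$. Once that is in place, the retract-and-linear-algebra steps are routine.
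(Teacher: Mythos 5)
Your proposal is correct and follows essentially the same route as the paper: reduce to computing the image of the idempotent $\mqf{\ab{p}}(\widehat\varphi)^\omega$ acting on $(\zz/p\zz)^A$, identify that map with $\mat{p,\varphi}$, and observe that the image has dimension equal to the number of non-zero eigenvalues counted with multiplicity, i.e. $\deg(\poly{p,\varphi}^\rev)$. Your Fitting decomposition and the paper's rank--nullity argument via the generalized $0$-eigenspace are the same linear algebra packaged differently, and your extra care about $\mqf{\ab{p}}(\widehat\varphi^\omega)=\bar\varphi^\omega$ just makes explicit a step the paper leaves implicit.
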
 

\begin{proof}
    For convenience, we write $\ff_p=\zz/p\zz$. By \lem\ref{l:presentations}, we have $G\isom\img(\widehat{\varphi}^\omega)$, thus what we need is to compute the dimension of the image of $\mqf{\ab{p}}(\widehat{\varphi})^\omega$. Note that $\mqf{\ab{p}}(\widehat{\varphi})$ is a linear transformation of $\ff_p^A$ which may be identified with the matrix $\mat{p,\varphi}$. Moreover, $\en(\ff_p^A)$ is a finite monoid, so $\mat{p,\varphi}^\omega=\mat{p,\varphi}^n$ for infinitely many positive integers $n$, and it follows that
    \begin{equation*}
        \ker(\mat{p,\varphi}^\omega)=\{ x\in\ff_p^A \given \exists n\geq 1, \mat{p,\varphi}^n(x)=0\}.
    \end{equation*}
    But this is the generalized eigenspace of $\mat{p,\varphi}$ of eigenvalue 0, which has dimension $\mul_0(\poly{p,\varphi})$, the multiplicity of $0$ as a root of $\poly{p,\varphi}$ \cite[\coro7.5.3(2)]{Weintraub2019}. By the rank-nullity theorem,
    \begin{equation*}
        \dim(\img(\mat{p,\varphi}^\omega))=\deg(\poly{p,\varphi})-\mul_0(\poly{p,\varphi})=\deg(\poly{p,\varphi}^\rev).\qedhere
    \end{equation*}
\end{proof}

In light of \S\ref{ss:tate}, we then have the following.
\begin{theorem}\label{t:nilquotient}
    If $\varphi\in\en(\free(A))$ defines an $\omega$-presentation of a profinite group $G$, then $\mqf{\nil}(G) \isom \prod_p\freepro_p(\deg(\poly{p,\varphi}^\rev))$. Moreover, a pronilpotent group $H$ is a continuous homomorphic image of $G$ if and only if for every prime $p$, the $p$-Sylow subgroup of $H$ is $\deg(\poly{p,\varphi}^\rev)$-generated.
\end{theorem}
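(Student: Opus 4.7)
The plan is to assemble the statement directly from the machinery built up in Sections \S\ref{ss:tate} and \S\ref{ss:dimension}. Since $\omega$-presented groups are projective (a consequence of \prop\ref{p:presentations}, as noted after the definition of $\omega$-presentation), \prop\ref{p:nilprojective} already yields a decomposition
\begin{equation*}
    \mqf{\nil}(G) \isom \prod_p \freepro_p(\dee_p(G)),
\end{equation*}
with $p$ ranging over all primes. The only remaining task for the first assertion is to identify $\dee_p(G)$ with $\deg(\poly{p,\varphi}^\rev)$.

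To make this identification, I would invoke \rem\ref{r:dim}: because $G$ is finitely generated (by definition of $\omega$-presentation, $A$ is finite), each $\dee_p(G)$ is finite and coincides with the dimension of $\mqf{\ab{p}}(G)$ as a vector space over $\zz/p\zz$. The dimension formula (\prop\ref{p:dimformula}) computes this dimension to be exactly $\deg(\poly{p,\varphi}^\rev)$. Substituting back into the decomposition yields the first isomorphism.

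For the second assertion, apply \coro\ref{c:description}: a pronilpotent group $H$ is a continuous homomorphic image of $G$ if and only if, for every prime $p$, the $p$-Sylow subgroup of $H$ is $\dee_p(G)$-generated. Replacing $\dee_p(G)$ by $\deg(\poly{p,\varphi}^\rev)$, using the identification just established, gives the stated criterion.

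The proof is essentially a bookkeeping exercise that glues together \prop\ref{p:nilprojective}, \prop\ref{p:dimformula}, \rem\ref{r:dim}, and \coro\ref{c:description}; there is no real obstacle beyond making sure to note that the finite generation of $G$ (which is built into the definition of $\omega$-presented) is what allows \rem\ref{r:dim} to be applied to translate $\dee_p(G)$ into the dimension computed by the dimension formula.
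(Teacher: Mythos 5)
Your proposal is correct and follows exactly the same route as the paper's own proof: apply \prop\ref{p:nilprojective} for the decomposition, identify $\dee_p(G)$ with $\deg(\poly{p,\varphi}^\rev)$ via \rem\ref{r:dim} and the dimension formula (\prop\ref{p:dimformula}), and invoke \coro\ref{c:description} for the second assertion. No gaps.
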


\begin{proof}
    By \prop\ref{p:nilprojective}, the first part follows if we show that $\dee_p(G)=\deg(\poly{p,\varphi}^\rev)$ for every prime $p$. Since $G$ is finitely generated, $\dee_p(G)$ is the dimension of $\mqf{\ab{p}}(G)$ over $\zz/p\zz$ (\rem\ref{r:dim}), which is indeed $\deg(\poly{p,\varphi}^\rev)$ by the dimension formula. The second part is proved in a similar way, using \coro\ref{c:description}.
\end{proof}

\subsection{Perfect \texorpdfstring{$\omega$}{omega}-presented groups}
\label{ss:perfect}

We now characterize perfect $\omega$-presented groups and describe what happens otherwise. We deduce that $\omega$-presented groups are never pro-$p$, and this includes the maximal subgroups of free profinite monoids defined by primitive substitutions (the topic of \S\ref{s:max}). The material in this section partially answers a question of Zalesskii reported in \cite[{\S}8]{Almeida2013}: can free pro-$p$ groups be realized as maximal subgroups of free profinite monoids? The answer is negative at least for the maximal subgroups corresponding to primitive substitutions. At time of writing, the question remains open for arbitrary minimal shift spaces.

Let us start with a characterization. By a \emph{perfect profinite group}, we mean a profinite group $G$ whose commutator subgroup is dense in $G$. Equivalently, the maximal pro-Abelian quotient of $G$ is trivial.
\begin{proposition}\label{p:perfect}
    Let $\varphi$ define an $\omega$-presentation of a profinite group $G$. Then, $G$ is perfect if and only if $\mat{\varphi}$ is nilpotent, i.e. $\mat{\varphi}^n=0$ for some $n\geq 1$.
\end{proposition}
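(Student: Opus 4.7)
The plan is to translate the condition ``perfect'' into a statement about $\mat{\varphi}$ via the dimension formula, and then pass from a modulo-$p$ condition on $\mat{\varphi}$ to an integral one.

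First, I would show that $G$ is perfect if and only if $\mqf{\ab{p}}(G)$ is trivial for every prime $p$. The maximal pro-Abelian quotient of $G$ is trivial precisely when $G$ admits no non-trivial finite Abelian continuous quotient; and every non-trivial finite Abelian group admits, after decomposing into $p$-primary components and quotienting by the Frattini subgroup, a non-trivial elementary Abelian $p$-group quotient for some prime $p$. Thus perfection reduces to $\mqf{\ab{p}}(G)$ being trivial for every $p$.

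Then I would invoke \prop\ref{p:dimformula}: $\mqf{\ab{p}}(G)$ is trivial exactly when $\deg(\poly{p,\varphi}^\rev) = 0$. Unpacking the identities used in the proof of that proposition, this happens iff $0$ is the only eigenvalue of $\mat{p,\varphi}$ in an algebraic closure of $\zz/p\zz$, iff $\poly{p,\varphi}(x) = x^{\card(A)}$, iff $\mat{p,\varphi}$ is a nilpotent matrix over $\zz/p\zz$. Hence perfection of $G$ is equivalent to $\mat{\varphi}$ being nilpotent modulo every prime.

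The remaining step is purely arithmetical: ``nilpotent modulo every prime'' upgrades to ``nilpotent over $\zz$''. The cleanest route is via the characteristic polynomial: the congruences $\poly{\varphi}(x) \equiv x^{\card(A)} \pmod{p}$ for every prime $p$ force each non-leading coefficient of $\poly{\varphi}$ to be an integer divisible by every prime, hence zero; so $\poly{\varphi}(x) = x^{\card(A)}$ exactly, and Cayley--Hamilton gives $\mat{\varphi}^{\card(A)} = 0$. The converse is trivial, since reduction mod $p$ preserves nilpotence. I do not anticipate a serious obstacle: the only mildly delicate point is the very first reduction to elementary Abelian $p$-quotients, and even this is a routine structural observation about finite Abelian groups.
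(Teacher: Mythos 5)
Your proof is correct and follows essentially the same route as the paper: both reduce perfection to the vanishing of $\deg(\poly{p,\varphi}^\rev)$ for every prime $p$ and then translate this into nilpotence of $\mat{\varphi}$ over $\zz$ via the characteristic polynomial and Cayley--Hamilton. The paper organizes the first reduction through the maximal pronilpotent quotient (\theo\ref{t:nilquotient}, plus the remark that non-trivial pronilpotent groups are prosolvable) and handles the integrality step by noting that $\deg(\poly{p,\varphi}^\rev)=\deg(\poly{\varphi}^\rev)$ for cofinitely many primes, but these are only cosmetic differences from your direct use of $\mqf{\ab{p}}$ and the all-primes divisibility argument.
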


\begin{proof}
    Note that non-trivial pronilpotent groups are not perfect (they are prosolvable), hence $G$ is perfect if and only if its maximal pronilpotent quotient is trivial. Then, by \theo\ref{t:nilquotient}, $G$ is perfect if and only if $\deg(\poly{p,\varphi}^\rev)=0$ for all primes. However, for cofinitely many primes, $\deg(\poly{p,\varphi}^\rev)=\deg(\poly{\varphi}^\rev)$. The latter is zero if and only if $\poly{\varphi}(x)=x^n$, and this is equivalent to $\mat{\varphi}$ being nilpotent by the Cayley--Hamilton theorem. 
\end{proof}

We deduce immediately the following result.
\begin{corollary}\label{c:alternative}
    If $G$ is $\omega$-presented, then either $G$ is a perfect profinite group, or the group $\zz_p$ of $p$-adic integers is a continuous homomorphic image of $G$ for cofinitely many primes $p$. In particular, non-trivial pro-$p$ groups are not $\omega$-presented.
\end{corollary}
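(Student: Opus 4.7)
The plan is to combine \prop\ref{p:perfect} with the structural result of \theo\ref{t:nilquotient}. If $G$ is not perfect, then by \prop\ref{p:perfect} the incidence matrix $\mat{\varphi}$ is not nilpotent, so by Cayley--Hamilton $\poly{\varphi}(x) \neq x^n$ where $n = |A|$. Writing $\poly{\varphi}(x) = x^k q(x)$ with $q(0) \neq 0$, the reciprocal polynomial satisfies $\deg(\poly{\varphi}^\rev) = \deg(q) \geq 1$.

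Next I would show that for cofinitely many primes $p$, $\deg(\poly{p,\varphi}^\rev) = \deg(\poly{\varphi}^\rev) \geq 1$. This reduces to checking that the fixed non-zero integer $q(0)$ (which equals, up to sign, $\pdet(\mat{\varphi})$) stays non-zero modulo $p$, and only finitely many primes divide it. With this in hand, \theo\ref{t:nilquotient} tells me that $\mqf{\nil}(G)$ has the non-trivial free pro-$p$ group $\freepro_p(\deg(\poly{p,\varphi}^\rev))$ as a direct factor, and this in turn admits the one-generated free pro-$p$ group $\freepro_p(1) \isom \zz_p$ as a further quotient. Composing the projections $G \to \mqf{\nil}(G) \to \freepro_p(\deg(\poly{p,\varphi}^\rev)) \to \zz_p$ produces the desired dichotomy.

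For the last assertion, I would argue by contradiction. Suppose $G$ is a non-trivial $\omega$-presented pro-$p$ group. Then $G$ cannot be perfect, since any non-trivial finite $p$-group quotient of $G$ is nilpotent and so non-perfect. The dichotomy above then yields $\zz_q$ as a continuous homomorphic image of $G$ for some prime $q \neq p$. But every continuous homomorphic image of a pro-$p$ group is itself pro-$p$, whereas $\zz_q$ is not pro-$p$ for $q \neq p$, which is the desired contradiction.

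The only step requiring any care is the reduction-modulo-$p$ argument in the second paragraph, but this is genuinely routine: it simply records that the leading coefficient of $\poly{\varphi}^\rev$ is a fixed non-zero integer, so it survives modulo all but finitely many primes. The rest of the proof is a direct deduction from already established results.
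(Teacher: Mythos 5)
Your proof is correct and follows essentially the same route as the paper: reduce to non-nilpotence of $\mat{\varphi}$ via \prop\ref{p:perfect}, note that the degree of $\poly{p,\varphi}^\rev$ stabilizes at $\deg(\poly{\varphi}^\rev)\geq 1$ for all primes not dividing the pseudodeterminant, and extract $\zz_p$ from the pronilpotent quotient given by \theo\ref{t:nilquotient}. The only cosmetic difference is that you project $\freepro_p(d)$ onto $\zz_p$ directly where the paper passes through the Abelianization $\zz_p^d$ of $\mqf{p}(G)$, and you spell out the final contradiction for pro-$p$ groups that the paper leaves implicit.
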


\begin{proof}
    If $G$ is not perfect, then $\mat{\varphi}$ is not nilpotent and $\deg(\poly{\varphi}^\rev)>0$. As previously noted, $\deg(\poly{\varphi}^\rev)=\deg(\poly{p,\varphi}^\rev)$ for cofinitely many primes $p$. But by \theo\ref{t:nilquotient}, the product $\zz_p^{n}$ where $n=\deg(\poly{p,\varphi}^\rev)$ is the Abelianization of $\mqf{p}(G)$, hence it is a continuous homomorphic image of $G$. The last part follows by recalling that non-trivial prosolvable groups are not perfect.
\end{proof} 

Both alternatives occur in a non-trivial way. In \S\ref{ss:perfectex}, we exhibit a non-trivial perfect $\omega$-presented group. As for the other alternative, plenty of non-trivial examples are found among maximal subgroups of free profinite monoids (\S\ref{s:max}).
    
\subsection{Freeness}
\label{ss:freeness}

We give necessary conditions for absolute and relative freeness of $\omega$-presented groups. These results partially address \cite[Problem~8.3]{Almeida2013}. 

Let \var{H} be a pseudovariety. We say that a profinite group is \emph{free with respect to \var{H}} if it is isomorphic to $\freepro_\var{H}(A)$ for some set $A$. A profinite group is called \emph{relatively free} if it is free with respect to some pseudovariety \var{H}, and \emph{absolutely free} if moreover $\var{H}=\var{G}$, the pseudovariety of all finite groups. The next proposition characterizes relative freeness of maximal pronilpotent quotients of $\omega$-presented groups. Let $\pi$ be a set of primes. We let $\var{G}_{\nil,\pi}$ be the pseudovariety of finite nilpotent groups whose $p$-Sylow subgroups are trivial for all primes $p\notin\pi$.
\begin{proposition}\label{p:relfree}
    Let $\varphi\in\en(\free(A))$ define an $\omega$-presentation of a profinite group $G$. Let $\pi$ be the set of all primes $p$ such that $\deg(\poly{p,\varphi}^\rev)\neq 0$. Then, the following are equivalent.
    \begin{enumerate}
        \item\label{i:relfree} $\mqf{\nil}(G)$ is relatively free.
        \item\label{i:prime} For every prime $p$, $\deg(\poly{p,\varphi}^\rev)$ equals $\deg(\poly{\varphi}^\rev)$ or 0.
        \item\label{i:deg} $\mqf{\nil}(G)$ is free with respect to the pseudovariety $\var{G}_{\nil,\pi}$.
    \end{enumerate}
    In particular, $\mqf{\nil}(G)$ is a free pronilpotent group if and only if the pseudodeterminant of $\mat{\varphi}$ is $\pm1$.
\end{proposition}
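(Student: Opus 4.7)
The plan is to leverage the explicit decomposition $\mqf{\nil}(G) \isom \prod_p \freepro_p(d_p)$ provided by \theo\ref{t:nilquotient}, writing $d_p = \deg(\poly{p,\varphi}^\rev)$ and $d = \deg(\poly{\varphi}^\rev)$ throughout. Reducing $\poly{\varphi}$ modulo $p$ preserves its total degree but can only increase the multiplicity of $0$ as a root, so $d_p \leq d$, with equality precisely when $p$ does not divide the leading coefficient of $\poly{\varphi}^\rev$; by \rem\ref{r:pseudodet}, that leading coefficient is $\pm\pdet(\mat{\varphi})$, so $d_p = d$ for cofinitely many primes.

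I would dispatch (ii) $\Leftrightarrow$ (iii) first. Decomposing a pro-$\var{G}_{\nil,\pi}$ group into its $p$-Sylow subgroups (\lem\ref{l:sylow}) and using \lem\ref{l:free} gives $\freepro_{\var{G}_{\nil,\pi}}(d) \isom \prod_{p \in \pi} \freepro_p(d)$. Matching this against $\prod_p \freepro_p(d_p)$, via the rank invariance of free pro-$p$ groups (Melnikov), forces $d_p \in \{0, d\}$ for every $p$, which is exactly (ii). The implication (iii) $\Rightarrow$ (i) is immediate.

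The substantive step is (i) $\Rightarrow$ (ii). By Melnikov's theorem, $\mqf{\nil}(G) \isom \freepro_\var{H}(m)$ for some pseudovariety $\var{H}$ and cardinal $m$; finite generation of $\mqf{\nil}(G)$ forces $m$ to be finite. First, apply $\mqf{\nil}$ to both sides: by \lem\ref{l:free}, the right side becomes $\freepro_{\var{G}_\nil \cap \var{H}}(m)$ while the left is unchanged, so I may replace $\var{H}$ by $\var{G}_\nil \cap \var{H}$ and assume $\var{H} \subseteq \var{G}_\nil$. Next, apply $\mqf{\ab{p}}$ to both sides. On the left, \lem\ref{l:composition} gives $\mqf{\ab{p}}(G)$, whose $\zz/p\zz$-dimension is $d_p$ by the dimension formula (\prop\ref{p:dimformula}) and \rem\ref{r:dim}. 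On the right, \lem\ref{l:free} gives $\freepro_{\var{H} \cap \ab{p}}(m)$. Since pseudovarieties are closed under finite direct products and $\ab{p}$ consists precisely of the finite direct powers of $\zz/p\zz$, the intersection $\var{H} \cap \ab{p}$ is either $\{1\}$ or all of $\ab{p}$, so the right-hand side has $\zz/p\zz$-dimension $0$ or $m$. Thus $d_p \in \{0, m\}$ for every $p$, and combining with $d_p = d$ for cofinitely many $p$ forces either $d = 0$ or $m = d$; in either case (ii) holds.

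For the last assertion, $\mqf{\nil}(G)$ is free pronilpotent precisely when (iii) holds with $\pi$ equal to the set of all primes (or when $\mqf{\nil}(G)$ is trivial), which by the opening paragraph is equivalent to $\pdet(\mat{\varphi})$ being a unit in $\zz$, i.e., $\pm 1$. The main obstacle I anticipate is (i) $\Rightarrow$ (ii), since the pseudovariety $\var{H}$ is a priori arbitrary; the key moves are the reduction $\var{H} \subseteq \var{G}_\nil$ and the observation that subgroup- and product-closure leave only two possibilities for $\var{H} \cap \ab{p}$.
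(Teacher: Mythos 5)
Your proposal is correct and takes essentially the same route as the paper: the substantive implication \ref{i:relfree}$\Rightarrow$\ref{i:prime} is handled in both cases by applying $\mqf{\ab{p}}$ to the isomorphism $\mqf{\nil}(G)\isom\freepro_{\var{H}}(m)$ via \lem\ref{l:free} and the dimension formula, your dichotomy ``$\var{H}\cap\ab{p}$ is trivial or all of $\ab{p}$'' being the same observation the paper phrases as ``$\zz/p\zz\in\var{H}$ hence $\ab{p}\subseteq\var{H}$.'' The remaining implications and the pseudodeterminant criterion also match the paper's argument, with only cosmetic differences (e.g.\ your preliminary reduction to $\var{H}\subseteq\var{G}_\nil$, which is harmless but not needed).
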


\begin{proof}
    \emph{\ref{i:relfree} implies \ref{i:prime}.} Suppose that $\mqf{\nil}(G)$ is free with respect to a pseudovariety \var{H}, say $\mqf{\nil}(G)=\freepro_\var{H}(A)$ where $A$ is a finite set of cardinality $n$. The case $n=0$ is trivial: the only substitution on the empty alphabet has an empty incidence matrix, so then $\poly{\varphi}^\rev=1$ and \ref{i:prime} holds trivially. We may assume from now on that $n>0$. Fix a prime $p\in\pi$ and let $k=\deg(\poly{p,\varphi}^\rev)$. \theo\ref{t:nilquotient} implies that $\freepro_p(k)$ is a continuous homomorphic image of $G$, hence so is $(\zz/p\zz)^k$. Since $p\in\pi$, we have $k>0$, hence $\zz/p\zz\in\var{H}$ and $\ab{p}\subseteq\var{H}$. It follows from \lem\ref{l:free} that $\mqf{\ab{p}}(G)\isom(\zz/q\zz)^n$. In particular, $n$ is the dimension of $\mqf{\ab{p}}(G)$ over $\zz/p\zz$, so the dimension formula (\prop\ref{p:dimformula}) implies that $n=k$. But recall that $\deg(\poly{p,\varphi}^\rev)=\deg(\poly{\varphi}^\rev)$ for all sufficiently large $p\in\pi$, hence $n=\deg(\poly{\varphi}^\rev)$ and the result follows.

    \emph{\ref{i:prime} implies \ref{i:deg}.} Writing $d=\deg(\poly{\varphi}^\rev)$, we have by assumption $\deg(\poly{p,\varphi}^\rev)=0$ whenever $p\notin\pi$ and $\deg(\poly{p,\varphi}^\rev)=d$ otherwise. Applying \theo\ref{t:nilquotient} then gives
    \begin{equation*}
        \mqf{\nil}(G)\isom\prod_p\freepro_p(\deg(\poly{p,\varphi}^\rev))\isom\prod_{p\in \pi}\freepro_p(d),
    \end{equation*}
    which is indeed the free pro-$\var{G}_{\nil,\pi}$ group of rank $d$ (e.g. by Lemmas~\ref{l:free} and \ref{l:sylow}).

    That \ref{i:deg} implies \ref{i:relfree} is trivial, so it remains only to prove the last part of the statement. Recall that the leading coefficient of $\poly{\varphi}^\rev$ is equal, up to a sign, to the pseudodeterminant of $\mat{\varphi}$ (\rem\ref{r:pseudodet}). Thus, if $\pdet(\mat{\varphi})=\pm1$, then \ref{i:prime} is satisfied, $\pi$ is the set of all primes and by \ref{i:deg}, $\mqf{\nil}(G)$ is free pronilpotent. Conversely, suppose that $\mqf{\nil}(G)$ is free pronilpotent and that moreover there is a prime $p$ that divides $\pdet(\mat{\varphi})$. In particular, $\deg(\poly{p,\varphi}^\rev)<\deg(\poly{\varphi}^\rev)$ and $\mqf{\nil}(G)$ is non-trivial. Since $\mqf{\nil}(G)$ is relatively free, \ref{i:prime} must hold, thus $\deg(\poly{p,\varphi}^\ast)=0$. But then, \ref{i:deg} implies that the $p$-Sylow subgroup of $\mqf{\nil}(G)$ is trivial, contradicting the fact that $\mqf{\nil}(G)$ is a non-trivial free pronilpotent group. 
\end{proof}

We proceed to deduce necessary conditions for an $\omega$-presented group to be relatively or absolutely free. We think of these two results as quick tests for relative and absolute freeness. The second one extends a recent result of the author \cite[\coro4.7]{GouletOuellet2021}.
\begin{corollary}\label{c:reltest}
    Let $\varphi$ define an $\omega$-presentation of a profinite group $G$. If there is a prime $p$ such that $0<\deg(\poly{p,\varphi}^\rev)<\deg(\poly{\varphi}^\rev)$, then $G$ is not relatively free.
\end{corollary}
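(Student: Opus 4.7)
The plan is to prove the contrapositive by reducing to the pronilpotent case and invoking \prop\ref{p:relfree}. Suppose $G$ is relatively free, say $G\isom\freepro_\var{H}(A)$ for some pseudovariety $\var{H}$ and some set $A$. Then applying $\mqf{\nil}$ and using \lem\ref{l:free} yields
\begin{equation*}
    \mqf{\nil}(G) \isom \mqf{\nil}\freepro_\var{H}(A) \isom \freepro_{\var{H}\cap\var{G}_\nil}(A),
\end{equation*}
so $\mqf{\nil}(G)$ is itself relatively free (with respect to $\var{H}\cap\var{G}_\nil$).

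Now I would invoke the implication \ref{i:relfree}~$\Rightarrow$~\ref{i:prime} of \prop\ref{p:relfree}: since $\mqf{\nil}(G)$ is relatively free, we must have $\deg(\poly{q,\varphi}^\rev)\in\{0,\deg(\poly{\varphi}^\rev)\}$ for every prime $q$. This directly contradicts the hypothesis that there exists a prime $p$ with $0<\deg(\poly{p,\varphi}^\rev)<\deg(\poly{\varphi}^\rev)$, so $G$ cannot be relatively free.

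There is no real obstacle here: the corollary is essentially a repackaging of \prop\ref{p:relfree} after observing that relative freeness is preserved when passing to the maximal pronilpotent quotient, which is exactly what \lem\ref{l:free} delivers. The one small point worth stating explicitly in the write-up is that the set $A$ in the free presentation need not a priori be finite, but this is harmless because \prop\ref{p:relfree} applies to $\mqf{\nil}(G)$ regardless of the cardinality of $A$ — the hypothesis used there is only that $\mqf{\nil}(G)$ be relatively free.
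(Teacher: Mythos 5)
Your proof is correct and is essentially identical to the paper's: both pass to $\mqf{\nil}(G)$ via \lem\ref{l:free} and then invoke the implication \ref{i:relfree}~$\Rightarrow$~\ref{i:prime} of \prop\ref{p:relfree} to derive a contradiction. Your closing remark about the cardinality of $A$ is harmless but moot, since an $\omega$-presented $G$ is finitely generated and hence so is $\mqf{\nil}(G)$.
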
 

\begin{proof}
    If $G$ is relatively free, then so is $\mqf{\nil}(G)$ by \lem\ref{l:free}. But the assumption that $0<\deg(\poly{p,\varphi}^\rev)<\deg(\poly{\varphi}^\rev)$ contradicts \ref{i:prime} from \prop\ref{p:relfree}.
\end{proof}

\begin{corollary}\label{c:abstest}
    Let $\varphi$ define an $\omega$-presentation of a profinite group $G$. If $\pdet(\mat{\varphi})$ is not $\pm1$, then $G$ is not absolutely free.
\end{corollary}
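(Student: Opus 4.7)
The plan is to argue by contrapositive, reducing the statement to the last part of \prop\ref{p:relfree}. Suppose $G$ is absolutely free, so $G \isom \freepro(A)$ for some set $A$. Since $\omega$-presented groups are finitely generated by definition, and the rank of an absolutely free profinite group is a well-defined invariant, $A$ must be finite.

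Next, I would apply \lem\ref{l:free} with $\var{H}=\var{G}_\nil$ and $\var{K}=\var{G}$ to conclude
\begin{equation*}
    \mqf{\nil}(G) \isom \mqf{\nil}\freepro(A) \isom \freepro_\nil(A),
\end{equation*}
so $\mqf{\nil}(G)$ is free pronilpotent. By the final assertion of \prop\ref{p:relfree}, this forces $\pdet(\mat{\varphi})=\pm 1$. Contrapositively, if $\pdet(\mat{\varphi})\neq\pm 1$, then $G$ cannot be absolutely free.

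There is essentially no obstacle here: the corollary is a direct specialization of the stronger freeness criterion for $\mqf{\nil}(G)$ already proved in \prop\ref{p:relfree}, combined with the functoriality of $\mqf{\nil}$ on free profinite groups established in \lem\ref{l:free}. The only small point to be careful about is noting that absolute freeness of the finitely generated group $G$ means $G$ is free profinite on a \emph{finite} set, so that $\freepro_\nil(A)$ really is a non-degenerate free pronilpotent group to which the criterion of \prop\ref{p:relfree} applies cleanly.
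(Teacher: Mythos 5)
Your proposal is correct and follows exactly the paper's argument: contrapositive, then \lem\ref{l:free} to see that $\mqf{\nil}(G)$ is free pronilpotent, then the last assertion of \prop\ref{p:relfree} to force $\pdet(\mat{\varphi})=\pm1$. The extra remark about the finiteness of $A$ is harmless but not needed, since the criterion in \prop\ref{p:relfree} is stated as an equivalence that already covers the degenerate case.
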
 

\begin{proof}
    We prove the contrapositive. If $G$ is absolutely free, then it follows from \lem\ref{l:free} that $\mqf{\nil}(G)$ is a free pronilpotent group, hence $\pdet(\mat{\varphi})=\pm1$ by the last part of \prop\ref{p:relfree}.
\end{proof}

\subsection{A perfect example}
\label{ss:perfectex}

We conclude this section with an example of a perfect $\omega$-presented group. Consider the following endomorphism of the free group $\free(\{0,1\})$:
\begin{equation*}
    \psi\from\left\{\begin{array}{lll}
        0 & \mapsto & 010^{-1}1^{-1} \\
        1 & \mapsto & 0.
    \end{array}\right.
\end{equation*}
Let $P=\img(\widehat{\psi}^\omega)$ be the corresponding $\omega$-presented group. Plainly, $\mat{\psi}$ is nilpotent, so \prop\ref{p:perfect} ensures that $P$ is perfect. We now show that $P$ is non-trivial.

Consider a finite set $A$ and a finite group $H$. Let $\en(\freepro(A))$ act on the right of $H^A$ as follows: an element $t\in H^A$, viewed as a map $t\from A\to H$, naturally corresponds to a morphism of profinite groups $\widehat{t}\from\freepro(A)\to H$. For $\varphi\in\en(\freepro(A))$, define 
\begin{equation*}
    t^\varphi = (\widehat{t}\circ\varphi)|_A.
\end{equation*}
This gives a continuous right monoid action of $\en(\freepro(A))$ on $H^A$ \cite[\lem3.1]{Almeida2013}. Moreover, $H$ is a continuous homomorphic image of $\img(\varphi^\omega)$ if and only if there exists $t\in H^A$ and $k\geq 1$ such that $\{t(a) \given a\in A\}$ generates $H$ and $t^{\varphi^k}=t$ \cite[\prop3.2]{Almeida2013}. Let $\ff_4$ be the field with 4 elements.
\begin{proposition}
    The special linear group $\spl_2(\ff_4)$ is a continuous homomorphic image of $P$.
\end{proposition}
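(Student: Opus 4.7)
The plan is to invoke the criterion from \cite[\prop3.2]{Almeida2013} cited just above the statement: it suffices to exhibit a map $t\from\{0,1\}\to\spl_2(\ff_4)$ whose image generates $\spl_2(\ff_4)$ and which satisfies $t^{\widehat{\psi}^k}=t$ for some $k\geq 1$. Unpacking the right action with $X=t(0)$ and $Y=t(1)$, one computes $t^{\widehat{\psi}}(0)=\widehat{t}(\psi(0))=\widehat{t}(010^{-1}1^{-1})=[X,Y]$ and $t^{\widehat{\psi}}(1)=\widehat{t}(\psi(1))=X$, so $\widehat{\psi}$ acts on $\spl_2(\ff_4)^2$ via the map $f(X,Y)=([X,Y],X)$. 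The proposition therefore reduces to producing a pair $(X,Y)\in\spl_2(\ff_4)^2$ such that $\langle X,Y\rangle=\spl_2(\ff_4)$ and $f^k(X,Y)=(X,Y)$ for some $k\geq 1$.

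I would then exhibit an explicit pair and verify both conditions by direct computation. Setting $Z_0=X$, $Z_{-1}=Y$ and $Z_{n+1}=[Z_n,Z_{n-1}]$, the iterates have the form $f^n(X,Y)=(Z_n,Z_{n-1})$, so the task amounts to finding a periodic two-sided sequence in $\spl_2(\ff_4)$ satisfying this commutator recurrence, two consecutive terms of which generate the whole group. Since $\spl_2(\ff_4)\isom A_5$ has order $60$, the dynamical system lives on only $3600$ pairs and the search can be performed by hand or by computer: one picks a candidate $(X,Y)$ (for example, matrices in $\spl_2(\ff_4)$ corresponding under the isomorphism $\spl_2(\ff_4)\isom A_5$ to elements of coprime orders, such as a 5-cycle and an involution or 3-cycle known to generate $A_5$), iterates $f$, and records the orbit until it closes.

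The main obstacle is that $f$ does not preserve the property of generating $\spl_2(\ff_4)$: the trivial pair $(1,1)$ is a fixed point that attracts iterations whenever some consecutive commutator $[Z_n,Z_{n-1}]$ becomes trivial, so many choices of $(X,Y)$ are merely preperiodic and collapse into a proper subgroup before cycling. The decisive step is therefore to identify a genuinely periodic (and not just eventually periodic) pair for which $\{X,Y\}$ is not contained in any proper subgroup of $\spl_2(\ff_4)$. The latter check is short, since the proper subgroups of $A_5$ are, up to conjugacy, $A_4$, $D_{10}$ and $S_3$ together with their subgroups; ruling out each of these possibilities for the chosen pair produces the required surjection $P\to\spl_2(\ff_4)$ via \cite[\prop3.2]{Almeida2013}.
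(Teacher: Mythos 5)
Your reduction is exactly the one the paper uses: by \cite[\prop3.2]{Almeida2013} it suffices to exhibit a pair $(X,Y)\in\spl_2(\ff_4)^2$ that generates the group and is periodic under $f(X,Y)=([X,Y],X)$, and you correctly identify the main difficulty, namely that most pairs are merely preperiodic and collapse towards $(1,1)$. The problem is that your proposal never actually produces such a pair: it only asserts that a search over the $3600$ candidates ``can be performed'' and that one ``records the orbit until it closes''. The existence of a genuinely periodic generating pair is precisely the nontrivial content of the proposition, and without an explicit witness (or an a priori existence argument, which you do not give) the proof is incomplete. The paper supplies the witness: $u=\left(\begin{smallmatrix}1&1\\1&0\end{smallmatrix}\right)$ and $v=\left(\begin{smallmatrix}0&1\\1&g\end{smallmatrix}\right)$ with $g$ a generator of $\ff_4^\times$, together with the structural identity $(u,v)^{\psi^2}=(wuw^{-1},wvw^{-1})$ for $w=\left(\begin{smallmatrix}g&1\\0&1\end{smallmatrix}\right)$, from which periodicity with period $2k$, $k$ the order of $w$ in $\gl_2(\ff_4)$, is immediate rather than the outcome of blind iteration.

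A second, smaller divergence: your plan for verifying generation by ruling out the proper subgroups of $A_5$ (conjugates of $A_4$, $D_{10}$, $S_3$ and their subgroups) would work, but the paper avoids it entirely. Since $\mat{\psi}$ is nilpotent, $P$ is perfect by \prop\ref{p:perfect}, so the image $H=\langle u,v\rangle$ is a nontrivial perfect subgroup of $\spl_2(\ff_4)$; as $\spl_2(\ff_4)$ is the smallest nontrivial perfect group, $H$ must be the whole group. If you do fill in the missing explicit pair, I would recommend adopting this perfectness shortcut in place of the subgroup analysis, as it also removes any need to choose the pair with generation in mind: any nontrivial periodic pair automatically generates.
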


\begin{proof}
    Let $g$ be a generator of the multiplicative group $\ff_4^\times$, and consider the following $2\times 2$ matrices over $\ff_4$:
    \begin{equation*}
        u=\begin{pmatrix}1&1\\1&0\end{pmatrix},\quad v=\begin{pmatrix} 0&1\\1&g\end{pmatrix}.
    \end{equation*}
    One checks, via explicit computations, that
    \begin{equation*}
        (u,v)^{\psi^2}=(wuw^{-1}, wvw^{-1}), \text{ where } w=\begin{pmatrix} g & 1 \\ 0 & 1\end{pmatrix}.
    \end{equation*}
    It follows that $(u,v)^{\psi^{2k}} = (u,v)$, where $k$ is the order of the matrix $w$ in $\gl_2(\ff_4)$, the general linear group of dimension 2 over $\ff_4$. Let $H$ be the subgroup of $\spl_2(\ff_4)$ generated by $\{u,v\}$. By the aforementioned result \cite[\prop3.2]{Almeida2013}, it follows that $H$ is a continuous homomorphic image of $P$. As $P$ is perfect, so is $H$. But then $H$ is a non-trivial perfect subgroup of $\spl_2(\ff_4)$, and since the latter is the smallest non-trivial perfect group, we conclude that $H=\spl_2(\ff_4)$.
\end{proof}

\begin{question}\label{q:perfect}
    We wonder whether the above argument can be generalized to show that $\spl_2(\ff_{2^n})$ is a continuous homomorphic image of $P$ for every $n\geq 2$, where $\ff_{2^n}$ is the field with $2^n$ elements. 
\end{question}

Using GAP and SageMath \cite{GAP2020,Sage2020}, we were able to verify that the answer is positive for $2\leq n\leq 12$.

\section{Maximal subgroups of free profinite monoids}
\label{s:max}

We further study examples of $\omega$-presented groups arising from Almeida's correspondence between shift spaces and maximal subgroups of free profinite monoids (recalled in \S\ref{ss:correspondence}). We will focus on such maximal subgroups corresponding to primitive aperiodic substitutions. These groups are projective profinite groups by the main result of \cite{Rhodes2008}, and are in fact $\omega$-presented by the main result of \cite{Almeida2013}. This last result is of particular interest to us, so additional details are given in \S\ref{ss:return}. In \S\ref{ss:poly}, we revisit a result of Durand about eigenvalues of return substitutions. (By an eigenvalue of a substitution, we simply mean an eigenvalue of its incidence matrix.) In \S\ref{ss:schutz}, using this result, we relate more directly the characteristic polynomial of a primitive aperiodic substitution with the pronilpotent quotients of its Schützenberger group. We proceed to deduce specialized forms of the freeness tests of \S\ref{ss:freeness} and finally that the $\omega$-presented groups corresponding to primitive aperiodic substitutions of constant length cannot be free (\theo\ref{t:uniform}, our main result of this section). We finish, in \S\ref{ss:examples}, with a series of examples. 

\subsection{Almeida's correspondence}
\label{ss:correspondence}

We give a brief account of Almeida's correspondence, which associates to each minimal shift space a maximal subgroup in a free profinite monoid. For a more collected presentation of the topic, see Almeida et al.'s recent monograph \cite{Almeida2020a}. Given a finite discrete set $A$, consider the space $A^\zz$ equipped with the product topology. The map $\sigma\from A^\zz\to A^\zz$ defined by $\sigma(x)_n=x_{n+1}$ defines a self-homeomorphism of $A^\zz$ called the \emph{shift map}. A \emph{shift space} is a closed, non-empty subset $X\subseteq A^\zz$ satisfying $\sigma(X)=X$. Define the \emph{language of a shift space} $X$ to be the subset $\lang{X}$ of the free monoid $A^*$ formed by all words appearing as finite, contiguous subsequences in the elements $x\in X$. A shift space is called \emph{minimal} if it contains no shift space besides itself. It is well-known that a shift space is minimal if and only if \lang{X} is uniformly recurrent: this is essentially \cite[\prop5.2.3]{Almeida2020a}. (The definition of uniform recurrence may recalled e.g. in \cite[p.140]{Almeida2020a}.)

Almeida showed in \cite{Almeida2007} that if $X$ is a minimal shift space, then the topological closure of \lang{X} in the free profinite monoid $\widehat{A}^*$ intersects $\widehat{A}^*\setminus A^*$ in a regular $\mathcal{J}$-class. By standard semigroup theory, this $\mathcal{J}$-class contains maximal subgroups of $\widehat{A}^*$ that are (continuously) isomorphic to one another. We may think of these maximal subgroups as one single group, sometimes known as the \emph{Schützenberger group of $X$}. We say that a minimal shift space is \emph{periodic} if it is finite, or equivalently if its points have finite orbit under the shift map $\sigma$. Otherwise, by minimality, all points of $X$ have infinite orbit under $\sigma$ and we say that $X$ is \emph{aperiodic}. If $X$ is periodic, then its Schützenberger group is easily described: it is simply a free profinite group of rank 1 \cite[\theo7.5]{Almeida2006}. Hence, we restrict our attention to the aperiodic case.

Let now $\varphi$ be a primitive substitution over a finite alphabet $A$. That is, $\varphi$ is an endomorphism of $A^*$ whose incidence matrix $\mat{\varphi}$ is a primitive matrix. Equivalently, there is $n\in\nn$ such that, for all $a,b\in A$, the letter $b$ occurs in $\varphi^n(a)$. Such a substitution defines a shift space $X_\varphi\subseteq A^\zz$, whose language consists of all factors of the words $\varphi^n(a)$ for $n\geq 1$, $a\in A$ (see \cite[\S5.5]{Almeida2020a}). Going forward, we denote the language of $X_\varphi$ by $\lang{\varphi}$. Note that this language is uniformly recurrent (a proof may be found in \cite[\prop5.5.4]{Almeida2020a}), hence $X_\varphi$ is minimal. We say that $\varphi$ is \emph{aperiodic} if $X_\varphi$ is aperiodic in the above sense; otherwise, we say that $\varphi$ is \emph{periodic}. We denote the Schützenberger group of $X_\varphi$ by $G(\varphi)$ and by extension, we call it the \emph{Schützenberger group of $\varphi$}.

\subsection{Return substitutions}
\label{ss:return}

Return substitutions are one of the key tools for studying Schützenberger groups of primitive substitutions: they were used in \cite{Almeida2013} to obtain $\omega$-presentations for these groups. We give below the precise statement and reference for this result. But before, let us briefly recall what are return substitutions. Further details may be found in \cite{Durand1999}. 

Let $\varphi$ be a primitive substitution over a finite alphabet $A$. A pair of non-empty words $(u,v)$ is called a \emph{connection} of $\varphi$ when $uv\in\lang{\varphi}$ and there exists $n\geq 1$ such that $\varphi^n(u)\in A^*u$ and $\varphi^n(v)\in vA^*$. The least such $n$ is known as the \emph{order} of the connection. Consider the \emph{return set} $\ret{u,v}$, consisting of all words $r\in A^*$ such that $urv\in\lang{\varphi}$ and $urv$ starts and ends with consecutive occurrences of $uv$. Such a word is called a \emph{return word} to $(u,v)$. Recall that, by primitivity of $\varphi$, the language $\lang{\varphi}$ is uniformly recurrent. Hence, every long enough word in $\lang{\varphi}$ has an occurrence of $uv$, and the return set $\ret{u,v}$ must be finite. 

By uniform recurrence of $\lang{\varphi}$, there exists $l\in\nn$ such that for all $r\in\ret{u,v}$, $urv$ occurs in $u\varphi^{nl}(v)$, where $n$ is the order of $(u,v)$. We order $\ret{u,v}$ according to the leftmost occurrences of each $urv$ in $u\varphi^{nl}(v)$. Letting $A_{u,v}=\{0,\dots,\card(\ret{u,v})-1\}$, this ordering induces a monoid homomorphism $\theta_{u,v}\from A_{u,v}^*\to A^*$, which moreover does not depend on $l$. Note that $\ret{u,v}$ is the basis of a free submonoid of $A^*$ \cite[\lem17]{Durand1999}, so $\theta_{u,v}$ is injective. If $r\in\ret{u,v}$, then $u\varphi^n(r)v$ starts and ends with $uv$ and it follows that $\varphi^n(r)$ is uniquely a concatenation of elements of $\ret{u,v}$. In particular, we may define a substitution $\retsubs{\varphi}{u,v}$ of $A_{u,v}^*$ by $\varphi^n\theta_{u,v}=\theta_{u,v}\retsubs{\varphi}{u,v}$. We call $\retsubs{\varphi}{u,v}$ the \emph{return substitution} of $\varphi$ with respect to $(u,v)$. It is again a primitive substitution \cite[\lem21]{Durand1999}. 

We now recall a key result of Almeida and Costa implying that Schützenberger groups of primitive substitutions are $\omega$-presented. We stress that this is only valid for aperiodic substitutions. We also warn the reader that the original statement of the result is restricted to connections $(u,v)$ satisfying $|u|=|v|=1$, but this assumption is in fact never used in the proof. Relaxing this assumption can be convenient because it may happen that longer connections have less return words (e.g. the substitution $\varphi\from0\mapsto01,1\mapsto21,1\mapsto20$). 
\begin{theorem}[{\cite[\theo6.2]{Almeida2013}}]\label{t:return}
    Let $\varphi$ be a primitive aperiodic substitution and $(u,v)$ be a connection of $\varphi$. Then, $\retsubs{\varphi}{u,v}$, viewed as an endomorphism of $\free(A_{u,v})$, defines an $\omega$-presentation of $G(\varphi)$, that is $G(\varphi) \isom \langle A_{u,v} \mid \widehat{\retsubs{\varphi}{u,v}}^\omega(a)a^{-1} \given a\in A_{u,v}\rangle$.
\end{theorem}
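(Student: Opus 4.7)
The plan is to realize $G(\varphi)$ as $\img(\widehat{\retsubs{\varphi}{u,v}}^\omega)$, after which \lem\ref{l:presentations} immediately produces the claimed $\omega$-presentation. By Almeida's correspondence, $G(\varphi)$ is the maximal subgroup $H_e$ at some idempotent $e$ in the distinguished regular $\mathcal{J}$-class $J_\varphi=\overline{\lang{\varphi}}\setminus A^*$ of $\widehat{A}^*$. My first step would be to use the connection $(u,v)$, of order $N$, to build such an idempotent explicitly. Since $\varphi^N(u)\in A^*u$ and $\varphi^N(v)\in vA^*$, the sequence $\widehat{\varphi}^{kN}(uv)$ admits a profinite accumulation point $e\in J_\varphi\cap u\widehat{A}^*v$, and uniform recurrence of $\lang{\varphi}$ together with the finiteness of $\ret{u,v}$ force $e$ to be idempotent.

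Next I would transport the return substitution into this setting. The monoid homomorphism $\theta_{u,v}\from A_{u,v}^*\to A^*$ extends continuously to $\widehat{\theta}_{u,v}\from\widehat{A_{u,v}}^*\to\widehat{A}^*$, and for each $a\in A_{u,v}$ the element $e\,\widehat{\theta}_{u,v}(a)\,e$ lies in $H_e$. By the universal property of the free profinite group, these assignments extend to a continuous group morphism $\Theta\from\widehat{F}(A_{u,v})\to G(\varphi)$. The defining identity $\varphi^N\theta_{u,v}=\theta_{u,v}\retsubs{\varphi}{u,v}$, combined with the absorption property of $e$, forces $\Theta\circ\widehat{\retsubs{\varphi}{u,v}}=\Theta$ after sufficient iteration; passing to $\omega$-powers yields a factorization through $\img(\widehat{\retsubs{\varphi}{u,v}}^\omega)$. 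Surjectivity of the factored map follows because return words generate $H_e$ (a standard consequence of primitivity), and injectivity from a kernel comparison using the idempotency of both endomorphisms involved. Invoking \lem\ref{l:presentations} with $\psi=\widehat{\retsubs{\varphi}{u,v}}^\omega$ then delivers the stated $\omega$-presentation.

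The main obstacle is the first step: pinning down a suitable idempotent $e$ and identifying $H_e$ with $G(\varphi)$. This calls for a careful analysis of the $\mathcal{J}$-class $J_\varphi$ and of how $\widehat{\varphi}$ acts on its idempotents, relying crucially on primitivity and on the regularity of $J_\varphi$. A secondary subtlety is that $\Theta$ must be well-defined on the free \emph{group} $\widehat{F}(A_{u,v})$ and not merely on the free profinite monoid over $A_{u,v}$; this requires invertibility of each $\Theta(a)$ in $H_e$, which is automatic once one knows $\Theta(a)\in H_e$, but the verification of this membership itself hinges on the structural role of $e$ as the identity of $H_e$.
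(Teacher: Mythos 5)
Note first that the paper does not prove this theorem at all: it is imported verbatim from Almeida and Costa, the only original content being the remark that their hypothesis $|u|=|v|=1$ is never used in their argument. Your outline does track the broad strategy of that cited proof---manufacture an idempotent $e$ in the $\mathcal{J}$-class $\overline{\lang{\varphi}}\setminus A^*$ sitting at the cut of the connection, send each return word $r$ to $e\,\widehat{\theta}_{u,v}(r)\,e\in H_e$, and use $\varphi^N\theta_{u,v}=\theta_{u,v}\retsubs{\varphi}{u,v}$ to factor through $\img(\widehat{\retsubs{\varphi}{u,v}}^\omega)$---so as a roadmap it is reasonable.

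As a proof, however, it has genuine gaps precisely where the content of the theorem lies. The two claims you dispose of in a single clause each, namely that the elements $e\,\widehat{\theta}_{u,v}(a)\,e$ generate a dense subgroup of $H_e$ and that the induced epimorphism $\img(\widehat{\retsubs{\varphi}{u,v}}^\omega)\to H_e$ is injective, are the entire substance of the statement. Density of the return-word images is not ``a standard consequence of primitivity''; it requires relating arbitrary finite quotients of $H_e$ to the action of $\retsubs{\varphi}{u,v}$ on tuples in $H^{A_{u,v}}$ (the mechanism recalled in \S\ref{ss:perfectex}), and injectivity is where Almeida and Costa do their hardest work. A ``kernel comparison using the idempotency of both endomorphisms'' does not engage with this: \lem\ref{l:presentations} identifies $\ker(\psi)$ for an idempotent $\psi$, but says nothing about why your $\Theta$ restricted to $\img(\widehat{\retsubs{\varphi}{u,v}}^\omega)$ has trivial kernel. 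Some intermediate justifications are also off: an accumulation point of $(\widehat{\varphi}^{kN}(uv))_k$ lies in $\overline{A^*uvA^*}$, not in $u\widehat{A}^*v$, and its idempotency comes from passing to the subsequence $k=m!$ (i.e., taking the $\omega$-power of $\widehat{\varphi}^N$ in the profinite monoid $\en(\widehat{A}^*)$), not from uniform recurrence or finiteness of $\ret{u,v}$. Finally, aperiodicity must enter somewhere and your sketch never uses it: for a periodic primitive substitution the return substitution is $0\mapsto 0^k$ with $k>1$, the presentation $\langle \{0\}\mid \widehat{\retsubs{\varphi}{u,v}}^\omega(0)0^{-1}\rangle$ yields $\prod_{p\nmid k}\zz_p$, yet the Schützenberger group is $\freepro(1)=\widehat{\zz}$. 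An argument that does not distinguish the two cases cannot be complete.
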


Since all primitive substitutions have at least one connection (see \cite[\prop5.5.10]{Almeida2020a}), Schützenberger groups of primitive aperiodic substitutions are indeed $\omega$-presented. We further deduce the following. 
\begin{corollary}
    Schützenberger groups of primitive substitutions are neither perfect nor pro-$p$.
\end{corollary}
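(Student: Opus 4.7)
The plan is to split the argument according to whether the substitution is periodic or aperiodic, the two cases covered by the discussion in \S\ref{ss:correspondence}.

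In the periodic case, the Schützenberger group of a primitive substitution is described as a free profinite group of rank 1, that is, $\widehat{\zz} \isom \prod_p \zz_p$. This group is non-trivial and Abelian, so it is certainly not perfect (its maximal pro-Abelian quotient is itself). It also admits $\zz_q$ as a continuous quotient for every prime $q$, so it is not pro-$p$ for any single $p$.

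In the aperiodic case, I would invoke \theo\ref{t:return}: choose any connection $(u,v)$ (which exists by the cited \cite[\prop5.5.10]{Almeida2020a}) to get an $\omega$-presentation of $G(\varphi)$ whose defining endomorphism is $\retsubs{\varphi}{u,v}$. The return substitution is again primitive, hence its incidence matrix $\mat{\retsubs{\varphi}{u,v}}$ is a primitive non-negative integer matrix. Now apply \coro\ref{c:alternative}: it gives the dichotomy that $G(\varphi)$ is either perfect or admits $\zz_p$ as a continuous homomorphic image for cofinitely many primes~$p$. The latter alternative rules out being pro-$p$ for any single prime $p$, so it remains only to exclude the perfect alternative.

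By \prop\ref{p:perfect}, perfectness of $G(\varphi)$ would force $\mat{\retsubs{\varphi}{u,v}}$ to be nilpotent. This is where I would point out the key observation, which is really the heart of the matter: a primitive matrix cannot be nilpotent. Indeed, by definition some power of $\mat{\retsubs{\varphi}{u,v}}$ has all entries strictly positive, whereas any power of a nilpotent matrix eventually vanishes. Equivalently, by Perron--Frobenius the matrix has a strictly positive real eigenvalue, so its characteristic polynomial is not of the form $x^n$. This contradiction rules out perfectness and completes the argument. The only subtle step is remembering that primitivity is preserved by passing to a return substitution, which is recorded in \S\ref{ss:return}; the rest is immediate assembly of results already proved.
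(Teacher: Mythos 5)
Your proof is correct and follows essentially the same route as the paper: it splits into the periodic case (where $G(\varphi)$ is the rank-one free profinite group $\widehat{\zz}$) and the aperiodic case, where \theo\ref{t:return}, \coro\ref{c:alternative} and \prop\ref{p:perfect} combine with the observation that a primitive incidence matrix is never nilpotent. The only difference is cosmetic: you spell out the periodic case and the non-nilpotency of primitive matrices in slightly more detail than the paper does.
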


\begin{proof}
    Let $\varphi$ be a primitive substitution. If $\varphi$ is periodic, then the result is an easy consequence of \cite[\theo7.5]{Almeida2006}. From now on, we assume that $\varphi$ is aperiodic. By \theo\ref{t:return}, the group $G(\varphi)$ is $\omega$-presented, hence it cannot be pro-$p$ (\coro\ref{c:alternative}). Moreover, note that the $\omega$-presentation given by \theo\ref{t:return} is defined by an endomorphism with a primitive incidence matrix. Of course, primitive matrices are never nilpotent. In light of \prop\ref{p:perfect}, $G(\varphi)$ cannot be perfect. 
\end{proof}

\begin{remark}\label{r:proper}
    Let $\varphi\from A^*\to A^*$ be a primitive aperiodic substitution. It is called \emph{proper} when for some $a,b\in A$ and $k\geq 1$, $\varphi^k(c)\in aA^*\cap A^*b$ for all $c\in A$. In the proper case, there is a simpler version of the above theorem, which is sometimes more convenient: $\varphi$ itself defines an $\omega$-presentation of $G(\varphi)$ \cite[\theo6.4]{Almeida2013}.
\end{remark}

In any case, return substitutions may be effectively computed, for instance using the algorithm described in \cite[p.205]{Durand2012}, and as a result, the pronilpotent quotients of Schützenberger groups of primitive substitutions are quite transparent. Indeed, by Theorems~\ref{t:nilquotient} and \ref{t:return}, all is needed is a quick look at the reciprocal characteristic polynomial of any return substitution. However, computing return substitutions can be very tedious, as the example below shows. This motivates the results of \S\ref{ss:poly}.
\begin{example}\label{e:tedious1}
    Consider the following primitive substitution
    \begin{equation*}
        \varphi\from\left\{\begin{array}{lll}
            0 & \mapsto & 12\\
            1 & \mapsto & 22\\
            2 & \mapsto & 33\\
            3 & \mapsto & 00.
        \end{array}\right.
    \end{equation*}
    The pair of 1-letter words $(2,3)$ is a connection of $\varphi$ of order 12. The set $\ret{2,3}$ contains 12 return words with length ranging from 4 to 274. The return substitution $\retsubs{\varphi}{2,3}$ is thus defined on a 12-letter alphabet, and it is truly unyieldy: the images of the letters under $\retsubs{\varphi}{2,3}$ have lengths ranging from 821 to 97913. The other connections, which also have order 12, appear to give return substitutions that are comparable or even worse.
\end{example}

\subsection{Characteristic polynomials of return substitutions} 
\label{ss:poly}

Thankfully, we may relate, for a primitive substitution $\varphi$ with a connection $(u,v)$ of order $n$, the two polynomials $\poly{\varphi^n}$ and $\poly{\retsubs{\varphi}{u,v}}$, and in turn the reciprocal polynomials $\poly{\varphi^n}^\rev$ and $\poly{\retsubs{\varphi}{u,v}}^\rev$. In \cite[\prop9]{Durand1998a}, Durand shows that (up to taking a power) a primitive substitution shares the same eigenvalues as its one-sided return substitutions, except possibly for $0$ and roots of $1$. The main result of this subsection, \prop\ref{p:cyclo}, is a sharper version of this. We start with a technical lemma, also due to Durand. The lemma is outlined in the discussion preceding \cite[\prop9]{Durand1998a}. Since Durand's version of this lemma is stated for \emph{one-sided} return substitutions, we include a proof. 

Let $w, z$ be two words. An \emph{occurrence} of $z$ in $w$ is an integer $i\geq 0$ such that $w=xzy$ and $|x|=i$. The number of occurrences of $z$ in $w$ is denoted $|w|_z$. (Note that there is no conflict with the similar-looking notation introduced in \S\ref{ss:dimension}.) We also need to define incidence matrices for homomorphisms between free monoids over possibly different alphabets, which is done as follows. If $\varphi\from B^*\to A^*$ is a semigroup homomorphism where $A$ and $B$ are finite sets, then we let $\mat{\varphi}$ be the $A\times B$ matrix defined by
\begin{equation*}
    \mat{\varphi}(a,b) = |\varphi(b)|_a, \quad a\in A, b\in B.
\end{equation*}
Note that the formation of incidence matrices is compatible with composition, in the sense that $\mat{\varphi\psi}=\mat{\varphi}\mat{\psi}$ whenever $\varphi$ and $\psi$ are composable homomorphisms.

\begin{lemma}\label{l:durand}
    Let $\varphi\from A^*\to A^*$ be a primitive substitution and $(u,v)$ be a connection of $\varphi$ of order $n$. Then, there is a sequence of matrices $(K_l)_{l\in\nn}$ with integer coefficients making the following sets finite:
    \begin{equation*}
        \{\mat{\varphi^n}^l - \mat{\theta_{u,v}}K_l \given l\in\nn\},\quad \{\mat{\retsubs{\varphi}{u,v}}^l - K_l\mat{\theta_{u,v}} \given l\in\nn\}.
    \end{equation*}
\end{lemma}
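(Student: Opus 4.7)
\textbf{The plan} is to build the matrices $K_l$ from a decomposition of the words $\varphi^{nl}(a)$, for each $a\in A$ and each $l\geq 1$, of the form
\[
\varphi^{nl}(a) = p_{l,a}\,\theta_{u,v}(w_{l,a})\,s_{l,a},
\]
in which $w_{l,a}\in A_{u,v}^*$ encodes a concatenation of return words to $(u,v)$, while the prefix $p_{l,a}$ and suffix $s_{l,a}$ are words whose lengths are bounded uniformly in $a$ and $l$. Setting $K_l(i,a) = |w_{l,a}|_i$ then makes $K_l$ the $A_{u,v}\times A$ incidence matrix of the map $a\mapsto w_{l,a}$.

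\textbf{Existence of the decomposition.} This is where primitivity enters. Since $\lang{\varphi}$ is uniformly recurrent (\cite[\prop5.5.4]{Almeida2020a}) and $uv\in\lang{\varphi}$, there is a constant $L$ such that every factor of $\lang{\varphi}$ of length at least $L$ contains an occurrence of $uv$. Once $l$ is large enough that $|\varphi^{nl}(a)|\geq 2L$, applying this bound to the prefix and suffix of $\varphi^{nl}(a)$ of length $L$ shows that its first $uv$-occurrence starts within $L$ letters of the left endpoint and the last within $L$ letters of the right endpoint. The subword bracketed by these two occurrences is, by the very definition of return words, a concatenation of return words, and the remaining prefix and suffix have length at most $L$. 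The finitely many small values of $l$ for which this argument does not apply contribute only finitely many correction terms to the sets under consideration.

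\textbf{Deducing the two finiteness claims.} From the decomposition, counting occurrences of a letter $b\in A$ yields
\[
\mat{\varphi^n}^l(b,a) = (\mat{\theta_{u,v}}K_l)(b,a) + |p_{l,a}|_b + |s_{l,a}|_b,
\]
so every entry of $\mat{\varphi^n}^l - \mat{\theta_{u,v}}K_l$ is an integer bounded in absolute value by $L$, giving the first finite set. For the companion set, I would apply the decomposition to each factor of $\varphi^{nl}(r_j) = \varphi^{nl}(a_1)\cdots\varphi^{nl}(a_m)$, where $r_j = \theta_{u,v}(j) = a_1\cdots a_m$. Since the identity $\varphi^{nl}(r_j) = \theta_{u,v}(\retsubs{\varphi}{u,v}^l(j))$ says that the left-hand side is itself a concatenation of return words, counting occurrences of the $i$-th return word shows that the blocks $\theta_{u,v}(w_{l,a_k})$ contribute exactly $\sum_k |w_{l,a_k}|_i = (K_l\mat{\theta_{u,v}})(i,j)$, while the $m+1$ boundary regions (each of length at most $2L$, with return words of length at least $1$) contribute a uniformly bounded error. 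I expect the main obstacle to be precisely this last step: one must argue carefully that the complete return words straddling or sitting inside the glued boundary pieces $s_{l,a_k}p_{l,a_{k+1}}$ contribute a number of occurrences bounded independently of $l$, which requires tracking how the decompositions of neighboring $\varphi^{nl}(a_k)$'s fit together inside $\varphi^{nl}(r_j)$.
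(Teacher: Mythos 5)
Your proposal is correct and follows essentially the same route as the paper: the matrices $K_l$ are defined as incidence matrices of the map sending $a$ to the desubstituted "middle part" of $\varphi^{nl}(a)$ between its first and last occurrences of $uv$, the first finiteness claim follows from the uniformly bounded prefix and suffix, and the second from decomposing $\theta_{u,v}(\retsubs{\varphi}{u,v}^l(c))$ into the blocks $\varphi^{nl}(a_i)$ and controlling the boundary contributions. The one step you flag as delicate — that the block boundaries align with cutting points between return words so that the glued pieces $s_{l,a_k}p_{l,a_{k+1}}$ contribute only a bounded number of return-word occurrences — is exactly the point the paper disposes of by citing Durand, and your bounded-length argument for it is the right one.
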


\newcounter{step} 

\begin{proof}
    For simplicity, we replace $\varphi$ by $\varphi^n$ and assume that $n=1$. We define a map $f\from\lang{\varphi}\to\lang{\retsubs{\varphi}{u,v}}$ as follows. If $w\in\lang{\varphi}$ has an occurrence of $uv$, then it has a factorization $w=x_wy_wz_w$ satisfying (and defined by) the conditions
    \begin{equation*}
        \begin{cases}
            x_w\in A^*u, \\
            x_wy_w\in A^*u, \\
            |x_wv|_{uv}=1,
        \end{cases}
        \qquad
        \begin{cases}
            z_w\in vA^*,\\
            y_wz_w\in vA^*,\\
            |uz_w|_{uv}=1.
        \end{cases}
    \end{equation*}
    In this case, $y_w$ is a concatenation of elements of $\ret{u,v}$ and we let $f(w)=\theta_{u,v}^{-1}(y_w)$, which is well-defined by injectivity of $\theta_{u,v}$ \cite[\lem17]{Durand1999}. Otherwise, let $f(w)=\emptyw$, the empty word. The lemma is proved in 3 steps.

    \begin{figure}
        \begin{tikzpicture}[scale=.5]
            \draw (0,0) rectangle node {$x_w$} (2,1);
            \draw (2,0) rectangle node {$y_w$} (5,1);
            \draw (5,0) rectangle node {$z_w$} (7,1);
            \draw (1,1) rectangle node {$u$} (2,2);
            \draw (2,1) rectangle node {$v$} (3,2);
            \draw (4,1) rectangle node {$u$} (5,2);
            \draw (5,1) rectangle node {$v$} (6,2);
        \end{tikzpicture}
        \caption{Factorization of $w$ used to define the map $f$ in the proof of \lem\ref{l:durand}. The occurrences of $uv$ represented above are the first and last of $w$. They might overlap and even coincide.}
    \end{figure}
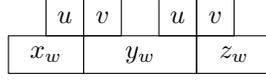

    \refstepcounter{step}\emph{Step \thestep.}\label{step:first} Let us write
    \begin{equation*}
        C_1 = \max\{|r| : r\in\ret{u,v}\},\quad C_2 = \min\{|r| : r\in\ret{u,v}\}.
    \end{equation*}
    We claim that, for every $w\in\lang{\varphi}$ and $b\in A$,
    \begin{equation}\label{e:ineq-thetaf}
        0\leq |w|_b-|\theta_{u,v} f(w)|_b \leq |w|-|\theta_{u,v} f(w)| \leq 2C_1+|uv|.
    \end{equation}
    If $w$ has no occurrence of $uv$, then the claim holds trivially. If $w$ has an occurrence of $uv$, then $|w|-|\theta_{u,v} f(w)|=|x_w|+|z_w|$ and the rightmost inequality of the claim follows from the upper bounds 
    \begin{equation}\label{e:ineq-xz}
        |x_w|\leq C_1+|u|;\quad |z_w|\leq C_1+|v|.
    \end{equation}
    To prove, say, the upper bound for $|x_w|$, note that $x_wv$ is a suffix of $urv$ for some $r\in\ret{u,v}$, hence $|x_w|\leq |ur| \leq C_1+|u|$. The upper bound for $|z_w|$ is obtained similarly. The two remaining inequalities of the claim are straightforward.

    \refstepcounter{step}\emph{Step \thestep.}\label{step:second} Let $s\in\lang{\retsubs{\varphi}{u,v}}$ and fix a factorization
    \begin{equation*}
        \theta_{u,v}(s) = w_1\dots w_k.
    \end{equation*}
    Assume further that each $w_i$ has at least one occurrence of $uv$, that is $|w_i|_{uv}\geq 1$. We claim that, for every letter $c\in A_{u,v}$, 
    \begin{equation}\label{e:ineq-ftheta}
        0\leq |s|_c - \sum_{i=1}^k|f(w_i)|_c\leq |s|-\sum_{i=1}^k|f(w_i)|\leq \frac{k(2C_1+|uv|)}{C_2}.
    \end{equation}
    By assumption, we have for every $i=1,\dots,k$ a factorization
    \begin{equation*}
        w_i = x_{w_i}y_{w_i}z_{w_i},
    \end{equation*}
    as described at the beginning of the proof. Since the cutting points of these factorizations correspond to occurrences of $uv$ in $\theta_{u,v}(s)$ shifted by $|u|$, there must be a corresponding factorization $s = \tilde{x}_1\tilde{y}_1\dots\tilde{y}_k\tilde{x}_{k+1}$ such that 
    \begin{equation*}
        \theta_{u,v}(\tilde{y}_i)=y_{w_i},\quad\theta_{u,v}(\tilde{x}_i)=
        \begin{cases}
            x_{w_1} & i=1,\\
            z_{w_{i-1}}x_{w_{i}} & 2\leq i\leq k,\\
            z_{w_k} & i=k+1.
        \end{cases}
    \end{equation*}
    (Use \cite[\prop2.6(2)]{Durand1998}.)  For $i=2,\dots,k$, we may use \eqref{e:ineq-xz} to conclude that 
    \begin{equation*}
        |\tilde{x}_{i}| \leq \frac{C_1+|u|}{C_2} + \frac{C_1+|v|}{C_2} \leq \frac{2C_1+|uv|}{C_2}.
    \end{equation*}
    Similarly, we have $|\tilde{x}_1|+|\tilde{x}_{k+1}|\leq\frac{2C_1+|uv|}{C_2}$. Noting that $\tilde{y}_i=f(w_i)$ by definition of $f$, we may now deduce the rightmost inequality of \eqref{e:ineq-ftheta},
    \begin{equation*}
        |s|-\sum_{i=1}^k|f(w_i)| = |\tilde{x}_1|+|\tilde{x}_{k+1}|+ \sum_{i=2}^k|\tilde{x}_{i}| \leq \frac{k(2C_1+|uv|)}{C_2}.
    \end{equation*}
    The two remaining inequalities are again straightforward.

    \refstepcounter{step}\emph{Step \thestep.}\label{step:third} For $l\in\nn$, define a homomorphism $\kappa_l\from A^*\to A_{u,v}^*$ by
    \begin{equation*}
        \kappa_l(a) = f(\varphi^l(a)),\quad a\in A.
    \end{equation*}
    Let $K_l=\mat{\kappa_l}$. We finish the proof by showing that the matrices $(K_l)_{l\in\nn}$ fulfill the requirements of the lemma. First, by \eqref{e:ineq-thetaf}, for all $a,b\in A$ and $l\in\nn$, we have
    \begin{equation*}
        0\leq |\varphi^l(a)|_b-|\theta_{u,v}\kappa_l(a)|_b = |\varphi^l(a)|_b-|\theta_{u,v}f(\varphi^l(a))|_b\leq 2C_1+|uv|.
    \end{equation*}
    Hence, the entries of the matrices $\mat{\varphi}^l-\mat{\theta_{u,v}}K_l$ can only take finitely many values, and this proves the first half of the statement. For the remaining half, we fix a letter $c\in A_{u,v}$ and we let $\theta_{u,v}(c)=a_1\dots a_k$, where $a_i\in A$. For every large enough $l$, the following factorization satisfies the condition of Step~\ref{step:second}:
    \begin{equation*}
        \theta_{u,v}(\retsubs{\varphi}{u,v}^l(c))=\varphi^l(a_1)\dots \varphi^l(a_k). 
    \end{equation*}
    Applying \eqref{e:ineq-ftheta} while noting that $k\leq C_1$ yields, for every letter $d\in A_{u,v}$,
    \begin{equation*}
        0\leq|\retsubs{\varphi}{u,v}^l(c)|_d-|\kappa_l\theta_{u,v}(c)|_d = |\retsubs{\varphi}{u,v}^l(c)|_d-\sum_{i=1}^k|f(\varphi^l(a_i))|_d \leq \frac{C_1(2C_1+|uv|)}{C_2}.
    \end{equation*}
    This shows that the entries of the matrices $\mat{\retsubs{\varphi}{u,v}}^l-K_l\mat{\theta_{u,v}}$ can take only finitely many values, completing the proof of the lemma.
\end{proof}

This leads us to the following result, which is our main result for the subsection. Roughly speaking, it states that, up to powers of $x$ and cyclotomic polynomials, a primitive substitution shares its characteristic polynomial with all of its return substitutions. As we already mentioned, this is a slightly sharpened version of a result of Durand \cite[\prop9]{Durand1998a}.
\begin{proposition}\label{p:cyclo}
    Let $\varphi\from A^*\to A^*$ be a primitive aperiodic substitution and $(u,v)$ be a connection of $\varphi$ of order $n$. Then, there exists a unique pair of coprime polynomials $\xi_1, \xi_2\in\zz[x]$ which are products of cyclotomic polynomials and satisfy
    \begin{equation*}
        \xi_1\poly{\varphi^n}^\rev = \pm\xi_2\poly{\retsubs{\varphi}{u,v}}^\rev. 
    \end{equation*}
\end{proposition}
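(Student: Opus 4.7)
My plan is to reduce the statement to a spectral matching claim between $M = \mat{\varphi^n}$ and $N = \mat{\retsubs{\varphi}{u,v}}$ by means of the matrix $T = \mat{\theta_{u,v}}$, using the integer matrices $K_l$ supplied by \lem\ref{l:durand}. First I would derive a bounded commutation relation: multiplying the identity for $M^l - TK_l$ on the right by $T$ and the identity for $N^l - K_l T$ on the left by $T$, then subtracting, we obtain
\begin{equation*}
M^l T - T N^l = (M^l - TK_l)\, T - T\,(N^l - K_l T).
\end{equation*}
By \lem\ref{l:durand} each term on the right side ranges over a finite set as $l$ varies, hence the set $\{M^l T - T N^l : l \in \nn\}$ is also finite; in particular its entries are uniformly bounded in $l$.

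Next I would use this bounded commutation to establish that $\poly{\varphi^n} = x^{a} c_M q$ and $\poly{\retsubs{\varphi}{u,v}} = x^{b} c_N q$, where $c_M, c_N$ are products of cyclotomic polynomials and $q \in \zz[x]$ is a common factor coprime to $x$ and to all cyclotomic polynomials. The crux is spectral: any non-zero, non-root-of-unity eigenvalue $\mu$ of $N$ must also be an eigenvalue of $M$ with the same algebraic multiplicity, and symmetrically. For an eigenvector $v$ with $Nv = \mu v$ and $|\mu|>1$, expanding $Tv$ along generalized eigenspaces of $M$ shows that all components outside the $\mu$-eigenspace must vanish---acted on by $M^l - \mu^l I$, they would grow at rate $\max(|\nu|,|\mu|)^l$, contradicting the boundedness of $(M^l - \mu^l I)Tv = B_l v$---and the surviving component must be a genuine $\mu$-eigenvector, not a higher generalized eigenvector. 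An analogous argument with generalized eigenvectors yields matching Jordan-block sizes. Non-degeneracy of $T$ on the relevant eigenspaces should come from positivity and the non-vanishing of Perron data, via Perron--Frobenius applied to the primitive matrices $\mat{\varphi^n}$ and $\mat{\retsubs{\varphi}{u,v}}$. The remaining cases ($|\mu| = 1$ with $\mu$ not a root of unity, and $|\mu| < 1$) I would treat dually, by restricting $N$ to its non-nilpotent, non-cyclotomic invariant subspace (where it becomes invertible) and running the analogous argument with $N^{-1}$ and with the transpose of $T$.

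The polynomial conclusion is then reached by passing to reciprocals. Since each cyclotomic polynomial is palindromic up to sign, the equalities above give $\poly{\varphi^n}^\rev = c_M^\rev q^\rev$ and $\poly{\retsubs{\varphi}{u,v}}^\rev = c_N^\rev q^\rev$, with $c_M^\rev, c_N^\rev$ still products of cyclotomic polynomials (up to sign). Setting $d = \gcd(c_M^\rev, c_N^\rev)$, $\xi_1 = c_N^\rev / d$, and $\xi_2 = c_M^\rev / d$ gives the desired identity $\xi_1 \poly{\varphi^n}^\rev = \pm \xi_2 \poly{\retsubs{\varphi}{u,v}}^\rev$ with $\xi_1, \xi_2$ coprime products of cyclotomic polynomials. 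Uniqueness is routine: any other such pair $\xi_1', \xi_2'$ would satisfy $\xi_1/\xi_2 = \xi_1'/\xi_2'$ as reduced fractions, forcing equality.

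The main obstacle is the spectral matching step, especially for eigenvalues on or inside the unit circle. Durand's version of the result in \cite[\prop9]{Durand1998a} holds only up to taking a power, reflecting the fact that the growth-rate argument is cleanest for $|\mu|>1$; sharpening to an exact identity of characteristic polynomials requires both handling $|\mu| \leq 1$ by a dual argument and carefully tracking Jordan-block sizes and algebraic multiplicities on both sides. The possible non-injectivity of $T$ (distinct return words may well have identical Parikh vectors) will also need to be handled carefully.
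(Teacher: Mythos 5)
Your overall skeleton --- matching the spectra of $M=\mat{\varphi^n}$ and $N=\mat{\retsubs{\varphi}{u,v}}$ away from $0$ and roots of unity by means of $T=\mat{\theta_{u,v}}$ and the matrices $K_l$ of \lem\ref{l:durand}, then peeling off powers of $x$ and cyclotomic factors and dividing by a gcd --- is the paper's skeleton, and your reciprocal-polynomial and uniqueness steps are fine. But the step you yourself flag as the main obstacle is a genuine gap, and your proposed fix does not close it. A growth-rate comparison of $|\nu|^l$ against $|\mu|^l$ can only separate eigenvalues of different moduli; it says nothing when $|\mu|=1$ and $\mu$ is not a root of unity (such eigenvalues do occur for integer substitution matrices, e.g.\ conjugates of Salem numbers), and the proposed ``dual argument with $N^{-1}$'' is not available because \lem\ref{l:durand} controls $M^l-TK_l$ and $N^l-K_lT$ only for positive powers $l$; inverting destroys both the integrality and the boundedness. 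Likewise, Perron--Frobenius gives non-degeneracy information only about the dominant eigenvalue, so it cannot supply injectivity of $T$ on the other eigenspaces.

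The paper's argument avoids all modulus considerations. First, the defining relation $\varphi^n\theta_{u,v}=\theta_{u,v}\retsubs{\varphi}{u,v}$ gives the \emph{exact} intertwining $MT=TN$, hence $M^lT=TN^l$ for all $l$ (your bounded commutation relation is an unnecessary weakening of an identity that holds on the nose); consequently right multiplication by $T$ maps the generalized eigenspace $E(\lambda)$ of $M$ into the corresponding space $E'(\lambda)$ of $N$, and only dimensions, not Jordan-block sizes, need to be tracked. Second, injectivity of this map for $\lambda\neq 0$ not a root of unity is purely algebraic: if it had nontrivial kernel one could produce a genuine eigenvector $y\neq 0$ with $yM=\lambda y$ and $yT=0$, and then writing $M^l=TK_l+Q_l$ gives $\lambda^l y=yM^l=yQ_l$; since $\{Q_l : l\in\nn\}$ is finite by \lem\ref{l:durand}, there are $l_1<l_2$ with $\lambda^{l_1}=\lambda^{l_2}$, which is impossible. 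The symmetric argument with $N^l=K_lT+Q_l'$ and the column-vector action gives an injection $E'(\lambda)\to E(\lambda)$, whence equality of algebraic multiplicities for every such $\lambda$. This single device is what permits sharpening Durand's statement past ``up to a power,'' and it also resolves your worry about non-injectivity of $T$: one never needs $T$ injective globally, only on each relevant generalized eigenspace.
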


\begin{proof}
    We note that cyclotomic polynomials, and hence their products, satisfy the relation $\xi^\rev=\pm\xi$. Hence, the result follows if we can show that for some positive integers $k_1$, $k_2$, we have
    \begin{equation}\label{eq:cyclo}
        x^{k_1}\xi_1(x)\poly{\varphi^n}(x) = x^{k_2}\xi_2(x)\poly{\retsubs{\varphi}{u,v}}(x),
    \end{equation}
    where $\xi_1, \xi_2\in\zz[x]$ are coprime and are both products of cyclotomic polynomials.

    As in the proof of the previous lemma, we may assume that $n=1$. Fix an eigenvalue $\lambda\in\cc$ of $\mat{\varphi}$ which is not 0 or a root of 1, and let $E(\lambda)$ and $E'(\lambda)$ denote the respective generalized eigenspaces of $\mat{\varphi}$ and $\mat{\retsubs{\varphi}{u,v}}$, that is
    \begin{gather*}
        E(\lambda) = \{ x\in\cc^A \given \exists k\geq 1, x(\mat{\varphi}-\lambda)^k=0\},\\
        E'(\lambda) = \{x\in\cc^{A_{u,v}} \given \exists k\geq 1, x(\mat{\retsubs{\varphi}{u,v}}-\lambda)^k=0\}.
    \end{gather*}
    Note that we view the elements of $\cc^A$ and $\cc^{A_{u,v}}$ as row vectors, so matrices act on the right. Fix an element $x\in E(\lambda)$, so $x\in\ker(\mat{\varphi}-\lambda)^k$ for some $k\geq 1$. Since $\varphi\theta_{u,v}=\theta_{u,v}\retsubs{\varphi}{u,v}$, we have $\mat{\varphi}\mat{\theta_{u,v}}=\mat{\theta_{u,v}}\mat{\retsubs{\varphi}{u,v}}$, and so
    \begin{equation*}
        x\mat{\theta_{u,v}}(\mat{\retsubs{\varphi}{u,v}}-\lambda)^k = x(\mat{\varphi}-\lambda)^k\mat{\theta_{u,v}} = 0.
    \end{equation*}
    Therefore, $x\mat{\theta_{u,v}}$ belongs to $E'(\lambda)$ and $\mat{\theta_{u,v}}$ gives a linear map $E(\lambda)\to E'(\lambda)$. We claim that the kernel of this map is trivial. Indeed, fix $x\in E(\lambda)\cap\ker(\mat{\theta_{u,v}})$, and let $k\in\nn$ be minimal such that $x(\mat{\varphi}-\lambda)^k=0$. Clearly, $k=0$ exactly when $x=0$. Thus, we assume $k>0$ and $x\neq 0$, and we argue by contradiction. Then, the vector $y=x(\mat{\varphi}-\lambda)^{k-1}$ is an eigenvector of $\mat{\varphi}$ of eigenvalue $\lambda$ which also belongs to $\ker(\mat{\theta_{u,v}})$. For every $l\geq 1$, let $Q_l=\mat{\varphi}^l-\mat{\theta_{u,v}}K_l$, where $K_l$ is the matrix from \lem\ref{l:durand}. It follows that 
    \begin{equation*}
        \lambda^ly = y\mat{\varphi}^l = y(\mat{\theta_{u,v}}K_l+Q_l) = yQ_l.
    \end{equation*}
    But \lem\ref{l:durand} states that the set of matrices $\{Q_l: l\in\nn\}$ is finite, so we may choose $1\leq l_1<l_2$ with $Q_{l_1}=Q_{l_2}$. Since $y\neq 0$, it follows that $\lambda^{l_1}=\lambda^{l_2}$, which contradicts the fact that $\lambda$ is not 0 or a root of 1. Thus, $x=0$ and $E(\lambda)$ is isomorphic to a subspace of $E'(\lambda)$. Using a similar argument, one proves that the left action of $\mat{\theta_{u,v}}$ on $\cc^{A_{u,v}}$, whose elements are now viewed as column vectors, induces an injective linear map $E'(\lambda)\to E(\lambda)$ (use instead $Q_l=\mat{\retsubs{\varphi}{u,v}}^l-K_l\mat{\theta_{u,v}}$). In particular, $\dim(E'(\lambda))=\dim(E(\lambda))$ for every $\lambda$ which is not 0 or a root of 1. 

    Next, recall that these dimensions give the algebraic multiplicities of $\lambda$ as a root of $\poly{\varphi}$ and $\poly{\retsubs{\varphi}{u,v}}$ respectively \cite[\coro7.5.3(2)]{Weintraub2019}. Hence, for some polynomials $\xi$, $\zeta_1$, $\zeta_2$ in $\cc[x]$ and some positive integers $k_1$, $k_2$, we have the following factorizations:
    \begin{equation*}
        \poly{\varphi}(x) = x^{k_2}\zeta_2(x)\xi(x),\quad \poly{\retsubs{\varphi}{u,v}}(x) = x^{k_1}\zeta_1(x)\xi(x),
    \end{equation*}
    where $\xi$ has no root equal to 0 or roots of 1, and all roots of $\zeta_1$, $\zeta_2$ are roots of 1. We claim that $\zeta_1$ and $\zeta_2$ are products of cyclotomic polynomials. Both cases being analogous, we argue only for $\zeta_2$. Choosing a root $\lambda$ of $\zeta_2$, we find that the minimal polynomial of $\lambda$ over $\qq$, say $\gamma$, must divide $\poly{\varphi}$. But $\gamma$ is a cyclotomic polynomial, thus its roots are all roots of 1. In particular, it follows that $\gamma$ is coprime with both $\xi$ and $x^{l_2}$. Hence, $\gamma$ must divide $\zeta_2$. Repeating this process until all roots of $\zeta_2$ are accounted for proves the claim.

    Let $\delta$ be the greatest common divisor of $\zeta_1$ and $\zeta_2$ in $\zz[x]$ and for $i=1,2$, let $\xi_i=\zeta_i/\delta$. Clearly we have $\xi_1\zeta_2=\xi_1\xi_2\delta=\xi_2\zeta_1$, so $\xi_1$ and $\xi_2$ together with the integers $k_1$ and $k_2$ satisfy \eqref{eq:cyclo}. That $\xi_1$ and $\xi_2$ are coprime and products of cyclotomic polynomials follows by construction. It remains to show that this is the only such pair. Suppose that $\xi'_1$ and $\xi'_2$ are products of cyclotomic polynomials satisfying \eqref{eq:cyclo} for some positive integers $l_1$, $l_2$. This readily implies $x^{l_1+k_2}\xi_1'\xi_2=x^{l_2+k_1}\xi_2'\xi_1$. Since $\xi_1$ and $\xi_2$ are coprime, we deduce that $\xi_1$ divides $\xi_1'$ and $\xi_2$ divides $\xi_2'$, thus proving uniqueness.
\end{proof}

\subsection{Pronilpotent quotients of Schützenberger groups}
\label{ss:schutz}

Let $\varphi$ be a primitive aperiodic substitution. Recall that \theo\ref{t:nilquotient} together with \theo\ref{t:return} imply that all the information concerning the pronilpotent quotients of $G(\varphi)$ is contained within the reciprocal characteristic polynomial of any return substitution of $\varphi$. The main result of \S\ref{ss:poly} means that the reciprocal characteristic polynomial of $\varphi$ itself carries at least partial information about the pronilpotent quotients of its Schützenberger group. This allows us to specialize some results from \S\ref{s:omega}, culminating with our main result (\theo\ref{t:uniform}), which states that Schützenberger groups of primitive aperiodic substitutions of constant length are never free. 

\begin{proposition}\label{p:dimcyclo}
    Let $\varphi$ be a primitive aperiodic substitution and $(u,v)$ be a connection of $\varphi$. Let $m_\varphi$ be the difference $\deg(\poly{\retsubs{\varphi}{u,v}}^\rev)-\deg(\poly{\varphi}^\rev)$. Then, for every prime $p$, we have $m_\varphi=\deg(\poly{p,\retsubs{\varphi}{u,v}}^\rev)-\deg(\poly{p,\varphi}^\rev)$. In particular, $\mqf{\ab{p}}(G(\varphi))$ has dimension $m_\varphi+\deg(\poly{p,\varphi}^\rev)$ over $\zz/p\zz$.
\end{proposition}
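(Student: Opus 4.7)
The plan is to apply \prop\ref{p:cyclo}, which furnishes an identity
\[
\xi_1 \poly{\varphi^n}^\rev = \pm\, \xi_2 \poly{\retsubs{\varphi}{u,v}}^\rev \quad \text{in } \zz[x],
\]
where $\xi_1, \xi_2$ are coprime products of cyclotomic polynomials (in particular, monic), and $n$ denotes the order of the connection $(u,v)$. The strategy is to compute the degrees on both sides of this identity twice: once in $\zz[x]$, to obtain an expression for $m_\varphi$ in terms of $\deg \xi_1, \deg \xi_2$, and once after reducing modulo $p$, to obtain the analogous expression for $\deg \poly{p,\retsubs{\varphi}{u,v}}^\rev - \deg \poly{p,\varphi}^\rev$.

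Two preliminary observations will make the bookkeeping painless. First, for any $\psi\in\en(\free(A))$ and any prime $p$, I claim that $\deg \poly{\psi^n}^\rev = \deg \poly{\psi}^\rev$ and $\deg \poly{p,\psi^n}^\rev = \deg \poly{p,\psi}^\rev$. By \rem\ref{r:pseudodet}, these degrees count non-zero eigenvalues of the relevant matrix with algebraic multiplicity, and $\lambda \mapsto \lambda^n$ preserves non-vanishing. Equivalently, via the dimension formula, this degree equals the rank of the idempotent power $\mat{p,\psi}^\omega$; and $(\mat{p,\psi}^n)^\omega = \mat{p,\psi}^\omega$ by uniqueness of the idempotent in a finite cyclic monoid, with the analogous statement over $\zz$. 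Second, for any monic $\xi\in\zz[x]$, reduction mod $p$ commutes with forming the reciprocal polynomial: since $\xi^\rev(x) = x^{\deg \xi}\xi(x^{-1})$ and $\deg \bar\xi = \deg \xi$ by monicity, one reads off directly that $\overline{\xi^\rev} = \bar\xi^\rev$ in $\ff_p[x]$. Applied to characteristic polynomials this gives $\overline{\poly{\psi}^\rev} = \poly{p,\psi}^\rev$ for every $\psi$.

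Taking degrees in the \prop\ref{p:cyclo} identity over $\zz$ then yields
\[
\deg \xi_1 + \deg \poly{\varphi^n}^\rev = \deg \xi_2 + \deg \poly{\retsubs{\varphi}{u,v}}^\rev,
\]
and combining with the first preliminary fact produces $m_\varphi = \deg \xi_1 - \deg \xi_2$. Reducing the identity mod $p$ (permissible coefficient-wise, and with $\bar\xi_i \neq 0$ since $\xi_i$ is monic) gives
\[
\bar\xi_1\, \poly{p,\varphi^n}^\rev = \pm\, \bar\xi_2\, \poly{p,\retsubs{\varphi}{u,v}}^\rev,
\]
and comparing degrees there (again using $\deg \bar\xi_i = \deg \xi_i$ and the first preliminary fact) produces
\[
\deg \poly{p,\retsubs{\varphi}{u,v}}^\rev - \deg \poly{p,\varphi}^\rev = \deg \xi_1 - \deg \xi_2 = m_\varphi,
\]
which is the main assertion. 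The ``in particular'' clause is then immediate: by \theo\ref{t:return}, $\retsubs{\varphi}{u,v}$ defines an $\omega$-presentation of $G(\varphi)$, and the dimension formula (\prop\ref{p:dimformula}) gives $\dim_{\zz/p\zz} \mqf{\ab{p}}(G(\varphi)) = \deg \poly{p,\retsubs{\varphi}{u,v}}^\rev = m_\varphi + \deg \poly{p,\varphi}^\rev$.

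The only mild technical points are the two preliminary commutativities (with the $n$-th power and with reduction modulo $p$); once those are established, the remainder is entirely degree counting against the backbone of \prop\ref{p:cyclo}, so I expect no serious obstacle beyond carefully tracking the definition of $\rev$.
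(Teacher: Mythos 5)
Your proof is correct and follows essentially the same route as the paper: both rest on taking degrees in the identity of \prop\ref{p:cyclo} and its reduction modulo $p$, using that the $\xi_i$ are monic and that $\deg(\poly{p,\varphi^n}^\rev)=\deg(\poly{p,\varphi}^\rev)$ via the eigenvalue count of \rem\ref{r:pseudodet}. The only (harmless) variation is that you obtain $m_\varphi=\deg\xi_1-\deg\xi_2$ by a direct degree comparison in $\zz[x]$, whereas the paper deduces it by letting $p$ be large enough and noting the right-hand side is independent of $p$.
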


\begin{proof}
    Let $n$ be the order of the connection $(u,v)$ and let $\xi_1, \xi_2$ be the pair of polynomials given by \prop\ref{p:cyclo}. Fix a prime $p$, and for $i=1,2$, let $\xi_{p,i}$ be the polynomial obtained by reducing the coefficients of $\xi_i$ modulo $p$. It follows from \prop\ref{p:cyclo} that $\deg(\poly{p,\retsubs{\varphi}{u,v}}^\rev)-\deg(\poly{p,\varphi^n}^\rev) = \deg(\xi_{p,1})-\deg(\xi_{p,2})$. But observe that cyclotomic polynomials are monic, hence $\deg(\xi_{p,i})=\deg(\xi_i)$ for $i=1,2$. We claim that $\deg(\poly{p,\varphi^n}^\rev)=\deg(\poly{p,\varphi}^\rev)$. Indeed, let $\kk$ be the algebraic closure of $\zz/p\zz$ and view $\mat{p,\varphi}$ and $\mat{p,\varphi^n}$ as matrices over $\kk$. Then, the eigenvalues of $\mat{p,\varphi^n}$ are the $n$th powers of the eigenvalues of $\mat{p,\varphi}$ \cite[{\S}XIV, \theo3.10]{Lang2002}, hence the two matrices must have the same number of non-zero eigenvalues over $\kk$ counted with multiplicity. By \rem\ref{r:pseudodet}, their reciprocal characteristic polynomials must have the same degree, as claimed. Thus, for every prime $p$, we have 
    \begin{equation*}
        \deg(\poly{p,\retsubs{\varphi}{u,v}}^\rev)-\deg(\poly{p,\varphi}^\rev) = \deg(\xi_{1})-\deg(\xi_{2}).
    \end{equation*}
    But for $p$ large enough, the left-hand side of this equation is equal to $m_\varphi$, while the right-hand side is clearly independent of $p$. This completes the proof of the first part of the proposition. For the second part, recall that by \theo\ref{t:return}, $\retsubs{\varphi}{u,v}$ defines an $\omega$-presentation of $G(\varphi)$. Then, note that $m_\varphi+\deg(\poly{p,\varphi}^\rev)=\deg(\poly{p,\retsubs{\varphi}{u,v}}^\rev)$ and apply the dimension formula (\prop\ref{p:dimcyclo}). 
\end{proof}

We stress that $m_\varphi$ need not be positive (\exa\ref{e:negative}). We also observe that the integer $m_\varphi$ does not depend on the choice of $(u,v)$. Indeed, we found it to be equal, for every prime $p$, to $\dee_p(G(\varphi))-\deg(\poly{p,\varphi}^\rev)$, a quantity which clearly does not depend on $(u,v)$. This can rephrased as follows.
\begin{corollary}\label{c:deg}
    Let $\varphi$ be a primitive aperiodic substitution and $(u,v)$ be a connection of $\varphi$. Let $\xi_1, \xi_2$ be the two polynomials given by \prop\ref{p:cyclo}. The difference $\deg(\xi_1)-\deg(\xi_2)$ does not depend on the connection $(u,v)$.
\end{corollary}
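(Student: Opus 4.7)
The plan is to read off the result directly from \prop\ref{p:dimcyclo} and the observation made in the paragraph just before the statement. The key point is that the difference $\deg(\xi_1)-\deg(\xi_2)$ is essentially the integer $m_\varphi$ introduced in \prop\ref{p:dimcyclo}, and $m_\varphi$ admits a reformulation that does not mention $(u,v)$.

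In detail, I would first note that in the proof of \prop\ref{p:dimcyclo} it was established that
\begin{equation*}
    \deg(\xi_1) - \deg(\xi_2) = \deg(\poly{p,\retsubs{\varphi}{u,v}}^\rev) - \deg(\poly{p,\varphi}^\rev)
\end{equation*}
for every prime $p$, and the left-hand side is, by the first part of \prop\ref{p:dimcyclo}, equal to $m_\varphi$. So the whole matter reduces to checking that $m_\varphi$ does not depend on the connection.

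Next I would appeal to \theo\ref{t:return}: the return substitution $\retsubs{\varphi}{u,v}$ defines an $\omega$-presentation of $G(\varphi)$. Hence, by the dimension formula (\prop\ref{p:dimformula}), $\deg(\poly{p,\retsubs{\varphi}{u,v}}^\rev)$ equals the dimension of $\mqf{\ab{p}}(G(\varphi))$ over $\zz/p\zz$, which is $\dee_p(G(\varphi))$. This quantity depends only on the Schützenberger group $G(\varphi)$ and not on the particular connection used to produce an $\omega$-presentation. Therefore
\begin{equation*}
    m_\varphi = \dee_p(G(\varphi)) - \deg(\poly{p,\varphi}^\rev)
\end{equation*}
for every prime $p$, and the right-hand side is manifestly independent of $(u,v)$.

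There is no real obstacle here: the substantive content was already packed into \prop\ref{p:cyclo} and \prop\ref{p:dimcyclo}, and the corollary is just the bookkeeping observation that the invariant produced from the pair $(\xi_1,\xi_2)$ equals an intrinsic invariant of $G(\varphi)$ corrected by a polynomial quantity of $\varphi$ alone. The only care needed is to remember that in the proof of \prop\ref{p:dimcyclo} the equality $\deg(\xi_1)-\deg(\xi_2)=m_\varphi$ was obtained by comparing both sides for sufficiently large $p$, so the conclusion is genuinely valid as an equality of integers, not merely modulo some prime.
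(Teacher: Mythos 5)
Your proposal is correct and follows essentially the same route as the paper: the text preceding \coro\ref{c:deg} observes precisely that $m_\varphi=\dee_p(G(\varphi))-\deg(\poly{p,\varphi}^\rev)$ for every prime $p$ (via \theo\ref{t:return} and the dimension formula), and that this quantity is manifestly independent of the connection. Your identification $\deg(\xi_1)-\deg(\xi_2)=m_\varphi$ from the proof of \prop\ref{p:dimcyclo} is exactly the intended bookkeeping.
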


\begin{remark}\label{r:cyclo}
    Let $(u,v)$ and $(u',v')$ be two connections of a primitive aperiodic substitution $\varphi$ sharing the same middle letters (i.e., $u$ and $u'$ share their last letter while $v$ and $v'$ share their first letter). Then, using a two-sided analog of \cite[\prop7]{Durand1998a}, one finds that $\poly{\retsubs{\varphi}{u,v}}^\rev = \poly{\retsubs{\varphi}{u',v'}}^\rev$. In particular, applying \prop\ref{p:cyclo} with either $(u,v)$ or $(u',v')$ yields the same pair $\xi_1,\xi_2$. This might not be true for connections that do not share the same middle letters (\exa\ref{e:cyclo}). 
\end{remark}

Because the value of $m_\varphi$ might be negative (\exa\ref{e:negative}), the relative freeness test of \coro\ref{c:reltest} cannot be applied directly using $\poly{\varphi}^\rev$ in place of $\poly{\retsubs{\varphi}{u,v}}^\rev$. However, we have the following weaker form.
\begin{proposition}\label{p:weaktest}
    Let $\varphi$ be a primitive aperiodic substitution. If there are two primes $p_1, p_2$ such that
    \begin{equation*}
        \deg(\poly{p_1,\varphi}^\rev)<\deg(\poly{p_2,\varphi}^\rev)<\deg(\poly{\varphi}^\rev),
    \end{equation*}
    then $G(\varphi)$ is not relatively free.
\end{proposition}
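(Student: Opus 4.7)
The plan is to reduce the statement to the freeness test already proved for general $\omega$-presented groups (\coro\ref{c:reltest}), by replacing $\varphi$ with any of its return substitutions and using the dimension-shift identity from \prop\ref{p:dimcyclo} to translate the hypotheses faithfully.

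Concretely, first I would fix a connection $(u,v)$ of $\varphi$, which exists by \cite[\prop5.5.10]{Almeida2020a}, and invoke \theo\ref{t:return} to obtain an $\omega$-presentation of $G(\varphi)$ defined by $\retsubs{\varphi}{u,v}$. This is the only $\omega$-presentation available here (we cannot directly use $\varphi$ itself unless it is proper, as recalled in \rem\ref{r:proper}), so the freeness test must be applied to $\retsubs{\varphi}{u,v}$.

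Next I would apply \prop\ref{p:dimcyclo}, which says that the differences $\deg(\poly{p,\retsubs{\varphi}{u,v}}^\rev) - \deg(\poly{p,\varphi}^\rev)$ equal a common integer $m_\varphi$ independent of $p$, and that the analogous identity holds at the level of $\deg(\poly{\retsubs{\varphi}{u,v}}^\rev) - \deg(\poly{\varphi}^\rev)$ as well. Adding the constant $m_\varphi$ to all three terms of the assumed chain of strict inequalities
\begin{equation*}
    \deg(\poly{p_1,\varphi}^\rev)<\deg(\poly{p_2,\varphi}^\rev)<\deg(\poly{\varphi}^\rev)
\end{equation*}
therefore yields
\begin{equation*}
    \deg(\poly{p_1,\retsubs{\varphi}{u,v}}^\rev)<\deg(\poly{p_2,\retsubs{\varphi}{u,v}}^\rev)<\deg(\poly{\retsubs{\varphi}{u,v}}^\rev).
\end{equation*}

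The final step is a sign check: since degrees are non-negative, $\deg(\poly{p_1,\retsubs{\varphi}{u,v}}^\rev)\geq 0$, so the middle term is strictly positive, and we have
\begin{equation*}
    0 < \deg(\poly{p_2,\retsubs{\varphi}{u,v}}^\rev) < \deg(\poly{\retsubs{\varphi}{u,v}}^\rev).
\end{equation*}
Applying \coro\ref{c:reltest} to the $\omega$-presentation of $G(\varphi)$ at the prime $p_2$ then shows that $G(\varphi)$ is not relatively free. There is no genuine obstacle: the role of the two primes is exactly to ensure, after the uniform shift by $m_\varphi$ (whose sign is unknown, cf. \exa\ref{e:negative}), that the degree at $p_2$ is sandwiched strictly between $0$ and the top degree, which is precisely what \coro\ref{c:reltest} requires.
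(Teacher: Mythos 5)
Your proposal is correct and follows essentially the same route as the paper: fix a connection, use \prop\ref{p:dimcyclo} to shift all three degrees by the constant $m_\varphi$, observe that the $p_1$-term is nonnegative so the $p_2$-term is strictly positive, and apply \coro\ref{c:reltest} at $p_2$ to the $\omega$-presentation given by \theo\ref{t:return}. No gaps.
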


\begin{proof}
    Let $(u,v)$ be a connection of $\varphi$. By \prop\ref{p:dimcyclo}, adding $m_\varphi$ to each term in the inequality above yields
    \begin{equation*}
        0\leq \deg(\poly{p_1,\retsubs{\varphi}{u,v}}^\rev) < \deg(\poly{p_2,\retsubs{\varphi}{u,v}}^\rev) < \deg(\poly{\retsubs{\varphi}{u,v}}^\rev).
    \end{equation*}
    Since $\varphi_{u,v}$ defines an $\omega$-presentation of $G(\varphi)$ (\theo\ref{t:return}), we may apply \coro\ref{c:reltest} with $p_2$ to conclude that $G(\varphi)$ is not relatively free.
\end{proof}

\exa\ref{e:weaktest} gives an example where the test above is conclusive. For the absolute freeness test of \coro\ref{c:abstest}, the situation is more straightforward.
\begin{proposition}\label{p:subsfreetest}
    Let $\varphi$ be a primitive aperiodic substitution. If $\pdet(\mat{\varphi})$ is not $\pm1$, then the Schützenberger group $G(\varphi)$ is not a free profinite group.
\end{proposition}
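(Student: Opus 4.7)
The plan is to reduce to the absolute freeness test \coro\ref{c:abstest}, applied not to $\varphi$ itself but to one of its return substitutions, and to use \prop\ref{p:cyclo} as the bridge between the two. Any primitive substitution admits at least one connection $(u,v)$, say of order $n$. By \theo\ref{t:return}, the return substitution $\retsubs{\varphi}{u,v}$ defines an $\omega$-presentation of $G(\varphi)$, so by \coro\ref{c:abstest} it suffices to prove that $\pdet(\mat{\retsubs{\varphi}{u,v}}) \neq \pm 1$.

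To relate the two pseudodeterminants, I would invoke \prop\ref{p:cyclo} to obtain coprime polynomials $\xi_1, \xi_2 \in \zz[x]$, each a product of cyclotomic polynomials, with
\begin{equation*}
    \xi_1 \poly{\varphi^n}^\rev = \pm\, \xi_2 \poly{\retsubs{\varphi}{u,v}}^\rev.
\end{equation*}
Since cyclotomic polynomials are monic, so are $\xi_1$ and $\xi_2$, and comparing leading coefficients on both sides together with \rem\ref{r:pseudodet} yields $\pdet(\mat{\varphi^n}) = \pm\, \pdet(\mat{\retsubs{\varphi}{u,v}})$. Next, I would note that $\mat{\varphi^n} = \mat{\varphi}^n$ and that the non-zero eigenvalues of $\mat{\varphi}^n$, counted with multiplicity, are exactly the $n$-th powers of the non-zero eigenvalues of $\mat{\varphi}$; taking their product gives $\pdet(\mat{\varphi^n}) = \pdet(\mat{\varphi})^n$.

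Putting these together, $\pdet(\mat{\varphi})^n = \pm\, \pdet(\mat{\retsubs{\varphi}{u,v}})$. Since $\mat{\varphi}$ has integer entries, $\poly{\varphi}^\rev \in \zz[x]$ and so $\pdet(\mat{\varphi}) \in \zz$; hence if $\pdet(\mat{\varphi}) \neq \pm 1$ then $\pdet(\mat{\varphi})^n \neq \pm 1$, which forces $\pdet(\mat{\retsubs{\varphi}{u,v}}) \neq \pm 1$ and completes the argument via \coro\ref{c:abstest}. I do not expect a serious obstacle: once \prop\ref{p:cyclo} is in hand, the whole proof is leading-coefficient bookkeeping plus the multiplicativity of $\pdet$ under matrix powers. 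The only potentially delicate point is ensuring that the cyclotomic factors $\xi_1, \xi_2$ do not disturb the leading coefficients, which is precisely where monicity of cyclotomic polynomials enters.
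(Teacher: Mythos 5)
Your proposal is correct and follows essentially the same route as the paper: pass to a return substitution via Theorem \ref{t:return}, use Proposition \ref{p:cyclo} together with monicity of cyclotomic polynomials to get $\pdet(\mat{\varphi})^n=\pm\pdet(\mat{\retsubs{\varphi}{u,v}})$, and conclude with Corollary \ref{c:abstest}. The leading-coefficient bookkeeping you spell out is exactly what the paper leaves implicit.
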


\begin{proof}
    Let $(u,v)$ be a connection of $\varphi$ of order $n$. It follows from \prop\ref{p:cyclo} that $\pdet(\mat{\varphi^n}) = \pm\pdet(\mat{\retsubs{\varphi}{u,v}})$. Moreover, we observe that $\pdet(\mat{\varphi^n})=\pdet(\mat{\varphi})^n$. Hence, $\pdet(\mat{\varphi})$ equals $\pm1$ if and only if so does $\pdet(\mat{\retsubs{\varphi}{u,v}})$. Recall (\theo\ref{t:return}) that $\retsubs{\varphi}{u,v}$ defines an $\omega$-presentation of $G(\varphi)$ and apply \coro\ref{c:abstest} to conclude that $G(\varphi)$ is not absolutely free. 
\end{proof}

A substitution $\varphi\from A^*\to A^*$ is said to have \emph{constant length} when there is some integer $k\in\nn$ such that $|\varphi(a)|=k$, for all $a\in A$. Constant length primitive substitutions include the famous Thue--Morse substitution (\exa\ref{e:morse}), which was shown to have a non-free Schützenberger group in \cite{Almeida2013}. The next theorem generalizes this result.
\begin{theorem}\label{t:uniform}
    Let $\varphi$ be a primitive aperiodic substitution of constant length. Then $G(\varphi)$ is not absolutely free. 
\end{theorem}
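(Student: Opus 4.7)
The plan is to reduce to \prop\ref{p:subsfreetest}: it suffices to show that $\pdet(\mat{\varphi})$ is not $\pm 1$. The key observation is that when $\varphi$ has constant length $k$, the column sums of $\mat{\varphi}$ are all equal to $k$, since the entries of column $b$ sum to $\sum_{a\in A}|\varphi(b)|_a = |\varphi(b)| = k$. Equivalently, the row vector $(1,1,\dots,1)$ is a left eigenvector of $\mat{\varphi}$ with eigenvalue $k$, so $k$ is an eigenvalue of $\mat{\varphi}$.

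Next I would argue that $k \geq 2$. If $k=1$ then $\varphi$ acts as a function $A\to A$, its incidence matrix is a $0{-}1$ matrix with a single $1$ per column, and primitivity forces $\mat{\varphi}$ to be a cyclic permutation matrix; in this case $X_\varphi$ consists of periodic orbits under $\sigma$, contradicting aperiodicity. So $k\geq 2$.

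Now I would show that $k$ divides $\pdet(\mat{\varphi})$ as integers. Indeed, the characteristic polynomial $\poly{\varphi}(x)$ is monic with integer coefficients, and writing $\poly{\varphi}(x) = x^m q(x)$ with $q(0)\neq 0$ we have $q(0) = \pm\pdet(\mat{\varphi})$ (see \rem\ref{r:pseudodet}). Since $k\neq 0$ is an eigenvalue, $(x-k)$ divides $q(x)$, and monicity together with $k\in\zz$ ensures via polynomial division that $q(x)/(x-k)$ also has integer coefficients. Evaluating at $0$ shows $k \mid q(0)$ in $\zz$, whence $k \mid \pdet(\mat{\varphi})$.

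Combining: $|\pdet(\mat{\varphi})| \geq k \geq 2$, so $\pdet(\mat{\varphi})\neq\pm 1$, and \prop\ref{p:subsfreetest} yields that $G(\varphi)$ is not absolutely free. There is no serious obstacle here; the only point requiring a little care is the case $k=1$, which is ruled out by aperiodicity, and the fact that divisibility of the pseudodeterminant by the integer eigenvalue $k$ happens in $\zz$ rather than merely in some ring of algebraic integers.
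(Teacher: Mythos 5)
Your proposal is correct and follows essentially the same route as the paper: both exhibit $(1,\dots,1)$ as a left eigenvector of $\mat{\varphi}$ with integer eigenvalue $k$, deduce that $k$ divides $\pdet(\mat{\varphi})$ via the factorization $\poly{\varphi}(x)=(x-k)\gamma(x)$ in $\zz[x]$, and conclude by \prop\ref{p:subsfreetest}. Your explicit treatment of the case $k=1$ (ruled out by aperiodicity) is a small point the paper leaves implicit, and it is a welcome bit of extra care.
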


\begin{proof}
    Assume that $k=|\varphi(a)|$ for every letter $a$. In light of \prop\ref{p:subsfreetest}, it is enough to show that the leading coefficient of $\poly{\varphi}^\rev$ is divisible by $k$. Note that the vector $(1,\dots,1)$ is a (left) eigenvector of $\mat{\varphi}$ of eigenvalue $k$ with coefficients in $\zz$, hence there is a factorization in $\zz[x]$
    \begin{equation*}
        \poly{\varphi}(x) = (x-k)\gamma(x).
    \end{equation*}
    Plainly then, $k$ divides the leading coefficient of $\poly{\varphi}^\rev$.
\end{proof}

\begin{remark}
    We may contrast the last result with the case of unimodular substitutions. Recall that a substitution is called \emph{unimodular} when its incidence matrix is invertible over $\zz$, or equivalently when its determinant is $\pm1$. If $\varphi$ is primitive, unimodular and aperiodic, then it follows from Propositions~\ref{p:relfree} and~\ref{p:cyclo} that $\mqf{\nil}(G(\varphi))$ is free pronilpotent. Therefore, $G(\varphi)$ is indistiguishable from a free profinite group in its finite nilpotent quotients. The same argument applies for all primitive aperiodic substitutions whose incidence matrix has pseudodeterminant~$\pm1$.

    However, while unimodularity of $\varphi$ guarantees that $\mqf{\nil}(G(\varphi))$ is free pronilpotent, it does not, by any means, guarantee that $G(\varphi)$ itself is free, even relatively so. The reader can find in \cite[\S6]{GouletOuellet2021} an example of a primitive substitution on 4 letters that induces an automorphism of the free group (hence is unimodular), but whose Schützenberger group is not relatively free.
\end{remark}

In a recent paper \cite{Costa2021}, Costa and Steinberg proved that the Schützenberger groups (and, thus, their maximal pronilpotent quotients) of irreducible shift spaces are invariant under flow equivalence. In particular, this means that our results provide flow invariants for shift spaces of primitive aperiodic substitutions. Here are some low hanging fruits. Among shift spaces defined by primitive aperiodic substitutions, we found the following to be invariant under flow equivalence:
\begin{enumerate}
    \item\label{i:seq} the sequence of integers $(\deg(\poly{p,\retsubs{\varphi}{u,v}}^\rev))_p$ indexed by prime numbers;
    \item\label{i:pdet} the set of primes dividing $\pdet(\varphi)$.
\end{enumerate}
These are reasonably easy to compute, especially the second one, but they are also fairly weak. For instance, \ref{i:pdet} cannot distinguish primitive unimodular substitutions that are defined on the same alphabet. At least, these invariants suffice to separate, for instance, unimodular substitutions from substitutions of constant length.

\subsection{Examples}
\label{ss:examples}

Let us conclude with a series of examples chosen to illustrate different aspects of our results. All of them are primitive and aperiodic (aperiodicity can be checked using \cite[Exercise~5.15]{Almeida2020a}, for instance). We use, without further mention, the fact that in these cases, every return substitution defines an $\omega$-presentation of the Schützenberger group (\theo\ref{t:return}). Return substitutions were computed using a Python implementation of an algorithm described in \cite[p.205]{Durand2012}. In every example, we give also the relevant reciprocal characteristic polynomials, and (save for \exa\ref{e:almeida}) the polynomials $\xi_1$, $\xi_2$ of \prop\ref{p:cyclo} and the integer $m_\varphi$ of \prop\ref{p:dimcyclo}. We then proceed, using \theo\ref{t:nilquotient}, to describe the pronilpotent quotients of the Schützenberger group, and we draw conclusions regarding its freeness using our various tests (\S\S\ref{ss:freeness}, \ref{ss:schutz}). 

In what follows, we use the term \emph{cyclic} as a synonym for 1-generated. We also recall the notation $\zz_p$, denoting for a prime $p$ the additive group of the $p$-adic integers. For a set of primes $\pi$, we write $\freepro_{\nil,\pi}$ instead of $\freepro_{\var{G}_{nil,\pi}}$ (the definition of $\var{G}_{\nil,\pi}$ may be recalled at the beginning of \S\ref{ss:freeness}). Our first example is a good contender for the title of ``most studied substitution''.

\begin{example}\label{e:morse}
    The \emph{Thue--Morse substitution} is the binary substitution $\tau$ defined by
    \begin{equation*}
        \tau\from\left\{\begin{array}{lll}
            0 & \mapsto & 01\\
            1 & \mapsto & 10.
        \end{array}\right.
    \end{equation*}
    Since it has constant length, the group $G(\tau)$ is not free (\theo\ref{t:uniform}). The reciprocal characteristic polynomial of $\tau$ is $-2x+1$, so the weak relative freeness test stated in \prop\ref{p:weaktest} is inconclusive. Here is the return substitution corresponding to the connection $(0,1)$ of $\tau$, which has order 2:
    \begin{equation*}
        \retsubs{\tau}{0,1}\from\left\{
            \begin{array}{rlll}
                0 & \mapsto & 0123\\
                1 & \mapsto & 013\\
                2 & \mapsto & 02123\\
                3 & \mapsto & 0213.
            \end{array}\right.
    \end{equation*}
    The reciprocal characteristic polynomial of $\retsubs{\tau}{0,1}$, together with the polynomials $\xi_1$, $\xi_2$ and the integer $m_\tau$, are as follows:
    \begin{equation*}
        \poly{\retsubs{\tau}{0,1}}^\rev(x) = (4x-1)(x-1),\quad \xi_1(x)=x-1,\quad \xi_2(x)=1,\quad m_\tau=1.
    \end{equation*}
    Hence, we may apply \coro\ref{c:reltest} with $p=2$ to conclude that $G(\tau)$ is not relatively free, thus recovering \cite[\theo7.6]{Almeida2013}. (We note that the proof given in \cite{Almeida2013} is, in some sense, similar to ours: it relies on variations in the dimensions of the maximal pro-$\var{Ab}_p$ quotients of $G(\tau)$ to reach a contradiction, much like what we do in \prop\ref{p:relfree}.) Letting $\pi$ be the set of all odd primes, we deduce from \theo\ref{t:nilquotient} that
    \begin{equation*}
        \mqf{\nil}(G(\varphi))\isom \zz_2\times\freepro_{\nil,\pi}(2).
    \end{equation*}
    A pronilpotent group is a quotient of $G(\tau)$ if and only if its 2-Sylow subgroup is cyclic and all other Sylow subgroups are 2-generated.
\end{example}

Next, we give an example of a substitution whose Schützenberger group has a cyclic maximal pronilpotent quotient. It also features a negative value for $m_\varphi$.
\begin{example}\label{e:negative}
    Consider the following ternary substitution:
    \begin{equation*}
        \varphi\from\left\{
            \begin{array}{lll}
                0 & \mapsto & 120\\
                1 & \mapsto & 121\\
                2 & \mapsto & 200.
            \end{array}
        \right.
    \end{equation*}
    Since it has constant length, its Schützenberger group is not free (\theo\ref{t:uniform}). Its reciprocal characteristic polyomial is equal to $(3x-1)(x-1)$. The pair $(0,1)$ is a connection of $\varphi$ of order 1, and the corresponding return substitution is the binary substitution
    \begin{equation*}
        \retsubs{\varphi}{0,1}\from\left\{
            \begin{array}{lll}
                0 & \mapsto & 0011\\
                1 & \mapsto & 01.
            \end{array}
        \right.
    \end{equation*}
    We give below its reciprocal characteristic polynomial, the two polynomials $\xi_1$ and $\xi_2$ and the integer $m_\varphi$:
    \begin{equation*}
        \poly{\retsubs{\varphi}{0,1}}^\rev(x)=3x-1,\quad \xi_1(x)=1,\quad \xi_2(x) = x-1,\quad m_\varphi=-1.
    \end{equation*}
    Let $\pi$ be the set of all primes distinct from 3. Following \prop\ref{p:relfree}, the maximal pronilpotent quotient of $G(\varphi)$ is free of rank 1 with respect to $\var{G}_{\nil,\pi}$. Accordingly, a pronilpotent group is a quotient of $G(\varphi)$ if and only if it is cyclic and its 3-Sylow subgroup is trivial. 
\end{example}

Next is a substitution for which the weak freeness test of \prop\ref{p:weaktest} is conclusive.
\begin{example}\label{e:weaktest}
    Consider the binary substitution
    \begin{equation*}
        \varphi\from\left\{
            \begin{array}{lll}
                0 & \mapsto & 1001\\
                1 & \mapsto & 000.
            \end{array}
        \right.
    \end{equation*}
    It satisfies $\poly{\varphi}^\rev(x) = -6x^2 - 2x + 1$, so its Schützenberger group is not relatively free (apply \prop\ref{p:weaktest} with $p_1=2$, $p_2=3$). The connection $(0,0)$ of $\varphi$, which has order 2, gives the return substitution
    \begin{equation*}
        \retsubs{\varphi}{0,0}\from\left\{
            \begin{array}{lll}
                0 & \mapsto & 0012100\\
                1 & \mapsto & 0012101221012100\\
                2 & \mapsto & 0012101222221012100.
            \end{array}
        \right.
    \end{equation*}
    We give below its reciprocal characteristic polynomial, the polynomials $\xi_1$, $\xi_2$ and the integer $m_\varphi$.
    \begin{equation*}
        \poly{\retsubs{\varphi}{0,0}}^\rev(x)=-(x-1)(36x^2-16x+1),\quad \xi_1(x)=x-1,\quad \xi_2(x)=1,\quad m_\varphi=1.
    \end{equation*}
    If $\pi$ is the set of all primes distinct from 2 and 3, then \theo\ref{t:nilquotient} yields
    \begin{equation*}
        \mqf{\nil}(G(\varphi)) \isom \zz_2\times\freepro_3(2)\times\freepro_{\nil,\pi}(3).
    \end{equation*}
    Consequently, a pronilpotent group is a quotient of $G(\varphi)$ if and only if its 2-Sylow subgroup is cyclic, its 3-Sylow subgroup is 2-generated, and all other Sylow subgroups are 3-generated.
\end{example}

We gave, in \S\ref{ss:return}, an example of a substitution on a quaternary alphabet whose return substitutions are very large. Let us revisit this example.
\begin{example}\label{e:tedious2}
    Recall the substitution $\varphi$ of \exa\ref{e:tedious1},
    \begin{equation*}
        \varphi\from\left\{
            \begin{array}{llll}
                0 & \mapsto & 12\\
                1 & \mapsto & 22\\
                2 & \mapsto & 33\\
                3 & \mapsto & 00.
            \end{array}
        \right.
    \end{equation*}
    Because it has constant length, its Schützenberger group is not free (\theo\ref{t:uniform}). We find that its reciprocal characteristic polynomial is $-(2x-1)(4x^3+4x^2+2x+1)$. Its return substitutions are too big to be represented here, but for the purpose of understanding the pronilpotent quotients of $G(\varphi)$, we only need the reciprocal characteristic polynomial of any return substitution. For instance, for the connection $(2,3)$ of $\varphi$, according to our computations,
    \begin{equation*}
        \poly{\retsubs{\varphi}{2,3}}^\rev(x) = (x-1)^6(2^{12}\,x-1)(2^{26}\,x^3 - (2^{16}\cdot 11)\,x^{2} - (2^{8}\cdot 5)\,x - 1),
    \end{equation*}
    so we have
    \begin{equation*}
        \xi_1(x) = (x-1)^6,\quad \xi_2(x)=1,\quad m_\varphi=6.
    \end{equation*}
    Applying \coro\ref{c:reltest} with $p=2$, we conclude that $G(\varphi)$ is not relatively free. Moreover, we can apply \theo\ref{t:nilquotient} to deduce the following, where $\pi$ is the set of all odd primes:
    \begin{equation*}
        \mqf{\nil}(G(\varphi))\isom \freepro_2(6)\times\freepro_{\nil,\pi}(10).
    \end{equation*}
    A pronilpotent group is a quotient of $G(\varphi)$ if and only if its $2$-Sylow component is 6-generated and all the other components are 10-generated.
\end{example}

Recall, from \rem\ref{r:cyclo}, that the polynomials $\xi_1$, $\xi_2$ of \prop\ref{p:cyclo} do not vary between connections sharing the same middle letters. Our next example shows that this is not true between arbitrary connections.
\begin{example}\label{e:cyclo}
    Consider the ternary substitution
    \begin{equation*}
        \varphi\from\left\{
            \begin{array}{lll}
                0 & \mapsto & 010\\
                1 & \mapsto & 21\\
                2 & \mapsto & 102.
            \end{array}
        \right.
    \end{equation*}
    It is unimodular, so its Schützenberger group has a free pronilpotent maximal quotient. Its reciprocal characteristic polyomial is $-(x-1)(x^2-3x+1)$. Consider the connections $(1,0)$ and $(0,1)$: they have respective order 1 and 2, and the corresponding return substitutions are
    \begin{equation*}
        \retsubs{\varphi}{1,0}\from\left\{
            \begin{array}{lll}
                0 & \mapsto & 01\\
                1 & \mapsto & 002\\
                2 & \mapsto & 0012,
            \end{array}
        \right.\quad
        \retsubs{\varphi}{0,1}\from\left\{
            \begin{array}{lll}
                0 & \mapsto & 011202312\\
                1 & \mapsto & 0112312\\
                2 & \mapsto & 012\\
                3 & \mapsto & 0112311202312.
            \end{array}
        \right.
    \end{equation*}
    With the connection $(1,0)$, we obtain the following values for the reciprocal characteristic polynomial, and the polynomials $\xi_1$, $\xi_2$:
    \begin{equation*}
        \poly{\retsubs{\varphi}{1,0}}^\rev(x)=(x+1)(x^2-3x+1),\quad \xi_1(x)=x+1,\quad \xi_2(x)=x-1,
    \end{equation*}
    while, with the connection $(0,1)$, we get instead
    \begin{equation*}
        \poly{\retsubs{\varphi}{0,1}}^\rev(x)=-(x-1)(x^2-7x+1),\quad \xi_1(x)=1=\xi_2(x).
    \end{equation*}
    In accordance with \coro\ref{c:deg}, both connections give the value $m_\varphi=0$. The return substitutions have pseudodeterminant $\pm1$, hence the maximal pronilpotent quotient of $G(\varphi)$ is a free pronilpotent group of rank 3 by \prop\ref{p:relfree}. 
\end{example}

We finish with an infinite family of examples determined by two parameters $k$ and $l$. One member of this family (the case $k=1$, $l=3$) was previously studied in early work of Almeida about maximal subgroups of free profinite monoids. To the best of our knowledge, it stands as the first published example of a non-free maximal subgroup of a free profinite monoid \cite[\exa7.2]{Almeida2007}.
\begin{example}\label{e:almeida}
    Fix $k,l\geq 0$ and let $\varphi$ be the binary substitution
    \begin{equation*}
        \varphi\from\left\{
            \begin{array}{lll}
                0 & \mapsto & 0^k1 \\
                1 & \mapsto & 0^l1.
            \end{array}
        \right.
    \end{equation*}
    Provided $l\geq 1$, it is primitive. We claim that it is aperiodic if and only if $k\neq l$. Indeed, suppose that $\varphi$ is periodic, and assume first that $k\geq l$. Let $w$ be a period of $\lang{\varphi}$, by which we mean that every word $x\in\lang{\varphi}$ is a factor of some power $w^n$, and $w$ is minimal for this property. By \cite[Exercise~5.15]{Almeida2020a}, we may in fact assume that $w$ is  is a prefix of $0^k1$, and clearly it cannot be a proper prefix; hence, we have $w=0^k1$. But $\lang{\varphi}$ also contains $10^l1$, and this can only be the case if $l=k$. The case $l\geq k$ is analogous. From now on, we assume $l\geq 1$ and $k\neq l$. 

    Next, we observe that $\varphi$ is proper, hence it defines an $\omega$-presentation of its own Schützenberger group (see \rem\ref{r:proper}). The reciprocal characteristic polynomial of $\varphi$ is given by: $\poly{\varphi}^\rev(x) = (k-l)x^2-(k+1)x+1$. By \prop\ref{p:relfree}, the maximal pronilpotent quotient of $G(\varphi)$ is free pronilpotent of rank 2 whenever $|k-l|=1$. (In fact, in that case, it is not hard to see that $\varphi$ induces an automorphism of the free group of rank 2. Such substitutions are well known to be Sturmian \cite[\coro9.2.7]{Fogg2002}, and the Schützenberger group of every Sturmian substitution must be a free profinite group of rank 2 \cite[\coro6.1]{Almeida2007}.) 

    On the other hand, when $|k-l|>1$, \coro\ref{c:abstest} implies that $G(\varphi)$ is not free. Moreover, when there is a prime that divides $k-l$ but not $k+1$, we conclude from \coro\ref{c:reltest} that the Schützenberger group is not relatively free. Let $\pi_1$ be the set of all primes that do not divide $k-l$ and $\pi_2$ be the (finite) set of all primes that divide $k-l$ but not $k+1$. We deduce from \theo\ref{t:nilquotient} that 
    \begin{equation*}
        \mqf{\nil}(G(\varphi))\isom \left(\prod_{p\in\pi_2}\zz_p\right)\times\freepro_{\nil,\pi_1}(2).
    \end{equation*}
    In particular, a pronilpotent group is a quotient of $G(\varphi)$ if and only if for every prime $p$, its $p$-Sylow component is: 2-generated if $p\in\pi_1$; cyclic if $p\in\pi_2$; trivial if $p$ divides $\gcd(k-l,k+1)$.
\end{example}
Using other means, the group $G(\varphi)$ above was shown not to be relatively free in the case $k=1$ and $l=3$ \cite[\theo7.2]{Almeida2013}, but this case is not covered by \coro\ref{c:reltest}. In fact, in light of our results, the pronilpotent quotients alone do not contain enough information about $G(\varphi)$ to reach this conclusion. Indeed, in that case, $\poly{\varphi}^\rev(x)=-2x^2-2x+1$ and \theo\ref{t:nilquotient} implies $\mqf{\nil}(G(\varphi))\isom\freepro_{\nil,\pi}(2)$, where $\pi$ is the set of all odd primes.

\section*{Acknowledgements}

This work was partially supported by the Centre for Mathematics of the University of Coimbra (UIDB/00324/2021, funded by the Portuguese Government through FCT/MCTES) and the Centre for Mathematics of the University of Porto (UIDB/00144/2020, funded by the Portuguese Government through FCT/MCTES). The author also benefited from a scholarship (PD/BD/150350/2019, funded by the Portuguese Government through FCT/MCTES).

My heartfelt thanks to Alfredo Costa, whose guidance and advice were invaluable throughout the long process of preparing this paper. Many thanks also to Jorge Almeida for his very helpful comments.

\bibliographystyle{abbrv} 
\bibliography{nilpotent_quotients}

\begin{thebibliography}{10}

\bibitem{Almeida2003}
J.~Almeida.
\newblock Profinite structures and dynamics.
\newblock {\em CIM Bull.}, 14:8--18, 2003.

\bibitem{Almeida2004}
J.~Almeida.
\newblock Symbolic dynamics in free profinite semigroups.
\newblock In {\em Algebraic Systems, Formal Languages and, Conventional and
  Unconventional Computation Theory}, volume 1366, pages 1--12. RIMS Kokyuroku,
  2004.

\bibitem{Almeida2007}
J.~Almeida.
\newblock Profinite groups associated with weakly primitive substitutions.
\newblock {\em J. Math. Sci.}, 144(2):3881--3903, 2007.
\newblock Translated from \emph{Fundam. Prikl. Mat.}, 11(3):13--48, 2005.

\bibitem{Almeida2013}
J.~Almeida and A.~Costa.
\newblock Presentations of {S}ch\"utzenberger groups of minimal subshifts.
\newblock {\em Israel J. Math.}, 196(1):1--31, 2013.

\bibitem{Almeida2020a}
J.~Almeida, A.~Costa, R.~Kyriakoglou, and D.~Perrin.
\newblock {\em Profinite Semigroups {a}nd Symbolic Dynamics}.
\newblock Springer International Publishing, 2020.

\bibitem{Almeida2006}
J.~Almeida and M.~V. Volkov.
\newblock Subword complexity of profinite words and subgroups of free profinite
  semigroups.
\newblock {\em Int. J. Algebra Comput.}, 16(02):221--258, 2006.

\bibitem{Costa2006}
A.~Costa.
\newblock Conjugacy invariants of subshifts: An approach from profinite
  semigroup theory.
\newblock {\em Int. J. Algebra Comput.}, 16(4):629--655, 2006.

\bibitem{Costa2021}
A.~Costa and B.~Steinberg.
\newblock The {K}aroubi envelope of the mirage of a subshift.
\newblock {\em Commun. Algebra}, 49:4820--4856, 2021.

\bibitem{Durand1998}
F.~Durand.
\newblock A characterization of substitutive sequences using return words.
\newblock {\em Discrete Math.}, 179(1-3):89--101, 1998.

\bibitem{Durand1998a}
F.~Durand.
\newblock A generalization of {C}obham's theorem.
\newblock {\em Theory Comput. Syst.}, 31(2):169--185, 1998.

\bibitem{Durand2012}
F.~Durand.
\newblock {HD0L}-{$\omega$}-equivalence and periodicity problems in the
  primitive case (to the memory of {G}. {R}auzy).
\newblock {\em Unif. Distrib. Theory}, 7(1):199--215, 2012.

\bibitem{Durand1999}
F.~Durand, B.~Host, and C.~Skau.
\newblock Substitutional dynamical systems, {B}ratteli diagrams and dimension
  groups.
\newblock {\em Ergodic Theory Dynam. Systems}, 19(4):953--993, 1999.

\bibitem{Fogg2002}
N.~P. Fogg.
\newblock {\em Substitutions in Dynamics, Arithmetics and Combinatorics}.
\newblock Springer Berlin Heidelberg, 2002.

\bibitem{Fried2008}
M.~D. Fried and M.~Jarden.
\newblock {\em Field Arithmetic}.
\newblock Springer Berlin Heidelberg, 2008.

\bibitem{GAP2020}
The GAP~Group.
\newblock {\em {GAP -- Groups, Algorithms, and Programming, Version 4.11.0}}.

\bibitem{GouletOuellet2021}
H.~Goulet-Ouellet.
\newblock Freeness of {Sch{\"{u}}tzenberger} groups of primitive substitutions.
\newblock {\em {DMUC} preprints}, 21(35), 2021.
\newblock Preprint available at \url{www.mat.uc.pt/preprints/2021.html}. To
  appear in \emph{Int. J. Algebra Comput.}

\bibitem{Hunter1983}
R.~P. Hunter.
\newblock Some remarks on subgroups defined by the {Bohr} compactification.
\newblock {\em Semigr. Forum}, 26(1):125--137, 1983.

\bibitem{Lang2002}
S.~Lang.
\newblock {\em Algebra}.
\newblock Springer New York, 2002.

\bibitem{Lubotzky2001}
A.~Lubotzky.
\newblock Pro-finite presentations.
\newblock {\em J. Algebra}, 242(2):672--690, 2001.

\bibitem{MacLane1971}
S.~Mac~Lane.
\newblock {\em Categories for the Working Mathematician}.
\newblock Springer New York, 1971.

\bibitem{Rhodes2008}
J.~Rhodes and B.~Steinberg.
\newblock Closed subgroups of free profinite monoids are projective profinite
  groups.
\newblock {\em Bull. Lond. Math. Soc.}, 40(3):375--383, 2008.

\bibitem{Ribes2010a}
L.~Ribes and P.~Zalesskii.
\newblock {\em Profinite Groups}.
\newblock Springer Berlin Heidelberg, second edition, 2010.

\bibitem{Sage2020}
The Sage Developers.
\newblock {\em {SageMath, the Sage Mathematics Software System, Version 9.2}},
  2020.

\bibitem{Weintraub2019}
S.~H. Weintraub.
\newblock {\em Linear Algebra for the Young Mathematician}.
\newblock American Mathematical Society, 2019.

\end{thebibliography}

\end{document}